\newcommand{\class}{\mathsf}
\newcommand{\set}[2]{\{ #1 \: | \linebreak[0] \: #2 \}}
\newcommand{\algebra}{\mathbf}
\newcommand{\logic}{\mathcal}
\newcommand{\B}{\logic{B}}
\newcommand{\CL}{\logic{CL}}
\newcommand{\LP}{\logic{LP}}
\newcommand{\K}{\logic{K}}
\newcommand{\Kleq}{\logic{K}^{\leq}}
\newcommand{\ETL}{\logic{ETL}}
\newcommand{\ECQ}{\logic{ECQ}}
\newcommand{\GentzenRelation}{G}
\newcommand{\GB}{\GentzenRelation\B}
\newcommand{\GentzenCalculus}{\mathbf{GC}}
\newcommand{\GKforB}{\GentzenCalculus\B}
\newcommand{\GKforCL}{\GentzenCalculus\logic{CL}}
\newcommand{\GKforLP}{\GentzenCalculus\logic{LP}}
\newcommand{\GKforK}{\GentzenCalculus\logic{K}}
\newcommand{\GKforETL}{\GentzenCalculus\logic{ETL}}
\newcommand{\GKforECQ}{\GentzenCalculus\logic{ECQ}}
\newcommand{\FmAlgebra}{\algebra{Fm}}
\newcommand{\LPmatrix}{\algebra{LP_{3}}}
\newcommand{\Kmatrix}{\algebra{K_{3}}}
\renewcommand{\Bmatrix}{\algebra{B_{4}}}
\newcommand{\ETLmatrix}{\algebra{ETL_{4}}}
\newcommand{\dmneg}{{-}}
\newcommand{\seq}{\vartriangleright}
\newcommand{\infleq}{\sqsubseteq}
\newcommand{\tautrans}{\boldsymbol{\tau}}
\newcommand{\rhotrans}{\boldsymbol{\rho}}
\newcommand{\SequentSet}{S}
\DeclareMathOperator{\AtSeqAux}{At}
\newcommand{\AtSeq}[1]{\AtSeqAux (#1)}
\DeclareMathOperator{\Log}{Log}
\newcommand{\HypSep}{.5in}
\newcommand{\LargerHypSep}{1.0in}
\newtheorem{theorem}{Theorem}[section]
\newtheorem{proposition}[theorem]{Proposition}
\newtheorem{lemma}[theorem]{Lemma}
\newtheorem{corollary}[theorem]{Corollary}
\newtheorem{definition}[theorem]{Definition}
\author{Adam P\v{r}enosil}
\title{Cut elimination, identity elimination, and interpolation in super-Belnap logics}
\date{}
\begin{document}

\maketitle

\begin{abstract}
  We develop a Gentzen-style proof theory for super-Belnap logics (extensions of the four-valued Dunn--Belnap logic), expanding on an approach initiated by Pynko. We show that just like substructural logics may be understood proof-theoretically as logics which relax the structural rules of classical logic but keep its logical rules as well as the rules of Identity and Cut, super-Belnap logics may be seen as logics which relax Identity and Cut but keep the logical rules as well as the structural rules of classical logic. A generalization of the cut elimination theorem for classical propositional logic is then proved and used to establish interpolation for various super-Belnap logics. In particular, we obtain an alternative syntactic proof of a refinement of the Craig interpolation theorem for classical propositional logic discovered recently by Milne.
\end{abstract}

\section{Introduction}

  The four-valued Dunn--Belnap logic $\B$ \cite{dunn66,dunn76,belnap77a,belnap77b} and its two three-valued extensions, the strong Kleene logic $\K$ \cite{kleene38,kleene52} and the Logic of Paradox $\LP$ \cite{priest79}, are well-known non-classical logics which have received a good deal of attention in the past decades, particularly from philosophically minded logicians. However, it is only in the last few years that a systematic study of the family of extensions of $\B$, called \emph{super-Belnap logics} by Rivieccio in \cite{rivieccio12}, was initiated. This paper is intended as a contribution to the investigation of this family of logics from a Gentzen-style proof-theoretic point of view.

  The starting point of this paper is Pynko's recent observation \cite{pynko10} that a Gentzen calculus for $\LP$ (for $\B$) may be obtained by adding the inverses of the logical introduction rules to a standard Gentzen calculus for classical logic and dropping the Cut rule (and the Identity axiom). Our main contribution is to extend this observation to arbitrary super-Belnap logics and formulate a useful generalization of the cut elimination theorem for classical logic to Gentzen calculi for super-Belnap logics. An immediate corollary of this result is then a broad sufficient condition for a super-Belnap logic to enjoy (a slight strengthening of) the Craig interpolation property, covering many of the new super-Belnap logics introduced in the last few years.

  In particular, we show that each super-Belnap logic corresponds to an extension of Pynko's calculus by a set of structural rules (rules which do not contain logical connectives). Super-Belnap logics may therefore be viewed as a class of logics analogical but orthogonal to the class of substructural logics. Substructural logics are obtained by keeping the logical rules of the classical Gentzen calculus fixed (as well as the Identity and Cut rules, although this is usually not mentioned explicitly) and tinkering with the structural rules of Exchange, Weakening, and Contraction. The situation with super-Belnap logics is dual: it is the rules of Exchange, Weakening, and Contraction which are kept fixed and the rules of Identity and Cut which are free to vary. In other words, we are justified in calling super-Belnap logics \emph{subreflexive and subtransitive logics} by analogy with substructural logics.

  Some of the results proved below may be of interest even to the classical logician. We recall Pynko's insight that the Logic of Paradox $\LP$ may be used to prove the admissibility of Cut in the Gentzen calculus for classical logic, and dualize it to prove the antiadmissibility of Identity. We then generalize the cut elimination procedure to proofs from non-empty sets of sequential premises, and use it to obtain a syntactic proof of the non-classical refinement of the Craig inter\-polation theorem for classical propositional logic $\CL$ discovered recently by Milne \cite{milne16}.

  The idea of using (non-deterministic) three-valued semantics to prove the admissibility of Cut dates back to the work of Sch\"{u}tte \cite{schuette60}. Later, it was used by Girard~\cite{girard87} to provide a three-valued semantics for a standard Gentzen calculus for classical logic without Cut. Dually, H\"{o}sli and J\"{a}ger~\cite{hosli+jager94} provided a three-valued semantics for a Gentzen calculus for classical logic without Identity. These ideas were then combined and extended by Lahav and Avron~\cite{lahav+avron13}, who provided a uniform way of defining a non-deterministic four-valued Kripke semantics for a wide range of Gentzen calculi without Cut or Identity or both. The difference between these approaches and the approach of Pynko~\cite{pynko10}, which we build on in the present paper, is that the latter is concerned with providing a semantics for a calculus which includes elimination rules (i.e.\ the inverses of introduction rules). Their presence will allow us to define what we call the analytic--synthetic normal form for proofs from a non-empty set of sequents. Note that the connection betwen deterministic semantics and the invertibility of logical rules was already pointed out in the two-valued case by Avron, Ciabattoni, and Zamansky~\cite{avron+ciabattoni+zamansky09}.

  It is worth recalling here that there are basically two distinct approaches to providing a Gentzen calculus for a given logic, in particular for~$\B$. In the first approach, the logic $\B$ is the logic of provable sequents. That is, $\Gamma \vdash_{\B} \varphi$ if and only if the sequent $\Gamma \seq \varphi$ is provable. This is the approach taken by Pynko \cite{pynko95} and Font \cite{font97}.\footnote{Pynko's calculus uses multiple-conclusion sequents and contains introduction rules for negated conjunction and disjunction but no structural rules (apart from Identity). The structural rules of Cut, Exchange, Weakening, and Contraction are then shown by Pynko to be admissible in this calculus. Font also briefly considers a single-conclusion version of this calculus with the structural rules present. However, the main calculus which he studies differs from Pynko's in replacing the introduction rules for negated conjunction and disjunction by Contraposition and Cut. Font's calculus, while being strongly adequate for~$\B$ in the sense of the theory of Font and Jansana~\cite{font+jansana09}, is in fact more of a calculus for the quasiequational theory of De Morgan lattices: De Morgan lattices form the equivalent algebraic semantics of Font's calculus, whereas $\B$ has no equivalent algebraic semantics. A cosmetic difference between these calculi and the ones presented here is that we consider the truth constants $\top$ and $\bot$ to be part of the signature of $\B$, while Pynko and Font do~not.} On the other hand, we may also draw a connection between the consequence relation of $\B$ and the derivability relation between sequents and sets of sequents, as Pynko \cite{pynko10} does. In the simplest form, this correspondence says that $\Gamma \vdash_{\B} \varphi$ if and only if the sequent $\emptyset \seq \varphi$ is provable from the sequents $\emptyset \seq \gamma$ for $\gamma \in \Gamma$. It is well known that these two relations coincide e.g.\ in some standard Gentzen calculi for classical and intuitionistic logic, but when it comes to $\B$, the two approaches require us to adopt rather different Gentzen calculi. In particular, the calculi of Pynko \cite{pynko95} and Font~\cite{font97} contain the Identity axiom but neither the standard introduction rules for negation nor the elimination rules, while the calculus of Pynko \cite{pynko10} it contains both the standard introduction rules for negation and the elimination rules while leaving out Identity and Cut. In the present paper we opt for the latter approach.


  The paper is structured as follows. Section \ref{sec:preliminaries} contains all the necessary preliminaries regarding super-Belnap logics and the equivalence of Gentzen and Hilbert relations. Section \ref{sec:gentzen-calculi} introduces Gentzen calculi for super-Belnap logics and Section \ref{sec:analytic--synthetic} generalizes the cut elimination theorem for classical logic to these calculi. In particular, the cut elimination theorem is generalized to proofs from non-empty sets of sequential premises: each such proof can be transformed into what we call a structurally atomic analytic--synthetic proof. Section \ref{sec:interpolation} then exploits this generalization of the cut elimination theorem to prove new interpolation results for super-Belnap logics. In particular, we obtain an alternative Gentzen-style proof of Milne's recent refinement of the Craig interpolation theorem for classical logic \cite{milne16}.

  Starting from the observation due to Pynko that a Gentzen calculus for the Dunn--Belnap logic $\B$ may be obtained by adding the inverses of the logical introduction rules to a standard Gentzen calculus for classical logic and dropping Identity and Cut (Theorem \ref{thm:hilbertizability}), we show that each super-Belnap logic may be obtained by adding a set of structural rules to this calculus (Proposition \ref{prop:structural-axiomatization}), in particular adding Identity yields a calculus for $\LP$ and adding Cut yields a calculus for $\K$, while adding a limited form of Cut yields a calculus for the Exactly True Logic of Pietz and Rivieccio~\cite{pietz+rivieccio13}. The fact that the logics $\CL$ and $\LP$ have the same theorems may be used to prove the classical admissibility of Cut (Theorem \ref{thm:cut-admissible}), as remarked on by Pynko already. Dually, the fact that the logics $\CL$ and $\K$ have the same antitheorems may be used to prove the classical anti\-admissibility of Identity (Theorem \ref{thm:identity-antiadmissible}). We then introduce structurally atomic analytic--synthetic proofs as a generalization of cut-free proofs to proofs from non-empty sets of sequential premises, and provide a procedure for transforming arbitrary proofs in suitable super-Belnap calculi into such proofs (Proposition~\ref{prop:structurally-atomic-specific} and Theorem \ref{thm:normal-form}). This is exploited in the last section to establish a broad sufficient condition for (a stronger form of) interpolation, proving new inter\-polation results for super-Belnap logics and strengthening some old ones for $\LP$~and~$\K$ (Propositions \ref{prop:interpolation}--\ref{prop:cl-interpolation}).

  We emphasize that we only consider propositional logics in this paper.

\section{Preliminaries}
\label{sec:preliminaries}

  In this section, we briefly introduce several super-Belnap logics and some definitions concerning the equivalence of Hilbert and Gentzen relations. For more details about super-Belnap logics and their history and motivations, the reader may consult the papers \cite{rivieccio12,albuquerque+prenosil+rivieccio16,prenosil16}.

  We shall be using the paradigm of abstract algebraic logic, see e.g.\ Font's recent textbook \cite{font16}. In particular, by a \emph{logic}, also sometimes called a \emph{Hilbert relation}, we shall mean a relation $\logic{L}$ between sets of formulas and formulas in a given language, written $\Gamma \vdash_{\logic{L}} \varphi$, which satisfies the following conditions:
\begin{align*}
& \varphi \vdash \varphi \tag{reflexivity}\\
& \text{if } \Gamma \vdash \varphi \text{, then } \Gamma, \Delta \vdash \varphi \tag{monotonicity} \\
& \text{if } \Gamma \vdash \varphi \text{ for all } \varphi \in \Phi \text{ and } \Phi, \Delta \vdash \psi \text{, then } \Gamma, \Delta \vdash \psi  \tag{cut} \\
& \text{if }\Gamma \vdash \varphi \text{, then } \sigma[\Gamma] \vdash \sigma \varphi \text{ for each substitution } \sigma \tag{structurality}
\end{align*}
If moreover $\Gamma \vdash \varphi$ implies that there is some finite set of formulas $\Gamma' \subseteq \Gamma$ such that $\Gamma' \vdash \varphi$, then the logic is called \emph{finitary}.

  A \emph{(logical) matrix} is an algebra $\algebra{A}$ in a given language equipped with a set of designated values $F \subseteq \algebra{A}$. Each matrix $\langle \algebra{A}, F \rangle$ defines a logic $\Log \langle \algebra{A}, F \rangle$ such that
\begin{align*}
  \Gamma \vdash_{\Log \langle \algebra{A}, F \rangle} \varphi \text{ if and only if } v[\Gamma] \subseteq F \text{ implies } v(\varphi) \in F
\end{align*}
for each valuation $v: \FmAlgebra \rightarrow \algebra{A}$, where $\FmAlgebra$ denotes the algebra of formulas in the given language. If $\class{K}$ is a class of matrices, then we define $\Log \class{K} = \bigcap_{\algebra{A} \in \class{K}} \Log \algebra{A}$. Each logic is then the logic of some class of matrices.

  Suppose that a logic $\logic{L}$ has a constant $\bot$ such that $\bot \vdash_{\logic{L}} p$ for some variable $p$. Then $\bot$ is never designated in a non-trivial model of $\logic{L}$. By an \emph{explosive rule} in such a logic, we shall now mean a rule of the form $\Gamma \vdash \bot$, denoted also $\Gamma \vdash \emptyset$. An \emph{explosive extension} of $\logic{L}$ is then an extension of $\logic{L}_{exp}$ by a set of explosive rules. If $\logic{L}_{exp}$ is an explosive extension of $\logic{L}$, then $\Gamma \vdash_{\logic{L}_{exp}} \varphi$ if and only if either $\Gamma \vdash_{\logic{L}} \varphi$ or $\Gamma \vdash_{\logic{L}_{exp}} \emptyset$.

  \emph{Theorems} of a logic $\logic{L}$ are formulas $\varphi$ such that $\emptyset \vdash_{\logic{L}} \varphi$. \emph{Antitheorems} of~$\logic{L}$ are sets of formulas $\Gamma$ such that $\Gamma \vdash_{\logic{L}} \emptyset$.

\begin{definition} \label{def:admissible}
  An inference rule is \emph{admissible} (\emph{antiadmissible}) in a logic if adding it does not yield any new theorems (antitheorems).
\end{definition}

  We now recall some (slightly adapted) definitions introduced by Raftery in his paper \cite{raftery06} concerning the equivalence of Hilbert and Gentzen systems. Raftery takes sequents to be pairs of finite sequences, but for the sake of simplicity we shall adopt the definition that a \emph{sequent} is a pair of finite \emph{multisets} of formulas, written as $\Gamma \seq \Delta$. This obviates the need for explicitly introducing the structural rule of Exchange. An \emph{atomic sequent} is a sequent in which all formulas are atoms. The \emph{empty sequent} is the sequent $\emptyset \seq \emptyset$.

  A \emph{Gentzen relation} $\GentzenRelation \logic{L}$ is a relation between sets of sequents and sequents which satisfies natural analogues of the above four conditions defining Hilbert relations. A Gentzen relation is \emph{finitary} if $\SequentSet \vdash_{\GentzenRelation \logic{L}} \Gamma \seq \Delta$ implies that there is some finite $\SequentSet' \subseteq \SequentSet$ such that $\SequentSet' \vdash_{\GentzenRelation \logic{L}} \Gamma \seq \Delta$.

  We say that two sets of formulas $\Gamma$ and $\Delta$ are equivalent in $\logic{L}$ in case $\Gamma \vdash_{\logic{L}} \delta$ for each $\delta \in \Delta$ and $\Delta \vdash_{\logic{L}} \gamma$ for each $\gamma \in \Gamma$, and likewise for sequents. Two sets of rules are equivalent in a Hilbert or Gentzen relation if the least Hilbert or Gentzen relations extending the relation by those rules coincide.

  A Hilbert or Gentzen calculus is just a set of rules in a given language, i.e.\ pairs of sets of formulas (sequents) interpreted as premises and formulas (sequents) interpreted as conclusions. A calculus axiomatizes a Hilbert or Gentzen relation if it is the least such relation which contains all of the rules of the calculus. Proofs in a given calculus are defined as suitably labelled well-founded trees (trees where all branches are finite), and a formula or a sequent is provable in a calculus from a set of formulas or sequents if and only if it the corresponding rule is valid in the relation axiomatized by the calculus. In finitary calculi, proofs are suitably labelled finite trees.

  A Gentzen relation $\GentzenRelation \logic{L}$ and a Hilbert relation $\logic{L}$ are \emph{simply equivalent} if there is a definable transformer $\tautrans$ from sequents to sets of formulas in the sense of Raftery \cite{raftery06} (adapted to multisets) such that
\begin{align*}
  & \SequentSet \vdash_{\GentzenRelation \logic{L}} \Gamma \seq \Delta \text{ if and only if } \tautrans [\SequentSet] \vdash_{\logic{L}} \tautrans(\Gamma \seq \Delta), \\
  & \Gamma \vdash_{\logic{L}} \varphi \text{ if and only if } \rhotrans [\SequentSet] \vdash_{\logic{L}} \rhotrans(\varphi), \\
  & \Gamma \seq \Delta \dashv\vdash_{\GentzenRelation \logic{L}} \rhotrans \tautrans (\Gamma \seq \Delta), \\
  & \varphi \dashv\vdash_{\logic{L}} \tautrans \rhotrans (\varphi),
\end{align*}
where $\rhotrans$ is the inclusion transformer $\varphi \mapsto \{ \emptyset \seq \varphi \}$. Note that the second and third conditions (or alternatively, the first and fourth conditions) are in fact redundant in this definition.

  In the following, we will only be concerned with Gentzen relations which validate the rule of Weakening, hence $\SequentSet \vdash_{\GentzenRelation \logic{L}} \emptyset \seq \emptyset$ will imply $\SequentSet \vdash_{\GentzenRelation \logic{L}} \Gamma \seq \Delta$ for each sequent $\Gamma \seq \Delta$. For such Gentzen relations, we may therefore define $\SequentSet \vdash_{\GentzenRelation \logic{L}} \emptyset$ as an abbreviation for $\SequentSet \vdash_{\GentzenRelation \logic{L}} \emptyset \seq \emptyset$. The antitheorems and antiadmissible rules of a Gentzen relation which validates the rule of Weakening may then be defined as in the case of Hilbert relations, replacing the constant $\bot$ by the empty sequent $\emptyset \seq \emptyset$.

  We now turn our attention to \emph{super-Belnap logics}, i.e.\ extensions of $\B$. The signature of the logic $\B$ consists of the distributive lattice conjunction and disjunction connectives $\wedge$ and $\vee$, the De Morgan negation operator $\dmneg$, and the two truth constants $\top$ and $\bot$ representing the top and bottom of the lattice order, respectively. Figure \ref{fig:matrices}, borrowed from the paper \cite{prenosil16}, shows several important matrices which define the super-Belnap logics studied in this paper. In all of these matrices, the De Morgan negation corresponds to reflection across the horizontal axis of symmetry, while the other connectives and constants are interpreted according to the lattice order.

\begin{figure}
\caption{Some logical matrices for super-Belnap logics}
\label{fig:matrices}

\bigskip

\begin{center}
\begin{tikzpicture}[scale=1,
  dot/.style={circle,fill,inner sep=2.5pt,outer sep=2.5pt}]
  \node (DM4a) at (0,-1) [dot] {};
  \node (DM4b) at (-1,0) [dot] {};
  \node (DM4c) at (1,0) [dot] {};
  \node (DM4d) at (0,1) [dot] {};
  \draw[-] (DM4a) edge (DM4b);
  \draw[-] (DM4a) edge (DM4c);
  \draw[-] (DM4b) edge (DM4d);
  \draw[-] (DM4c) edge (DM4d);
  \draw[rotate around={45:(0.5,0.5)}] (0.5,0.5) ellipse (0.5 and 1);
  \node at (0,-2) {$\algebra{B_{4}}$};
\end{tikzpicture}
\qquad
\begin{tikzpicture}[scale=1,
  dot/.style={circle,fill,inner sep=2.5pt,outer sep=2.5pt}]
  \node (K3a) at (0,0) [dot] {};
  \node (K3b) at (0,1) [dot] {};
  \node (K3c) at (0,2) [dot] {};
  \draw[-] (K3a) edge (K3b);
  \draw[-] (K3b) edge (K3c);
  \draw (0,2) ellipse (0.5 and 0.5);
  \node at (0,-1) {$\algebra{K_{3}}$};
\end{tikzpicture}
\qquad
\begin{tikzpicture}[scale=1,
  dot/.style={circle,fill,inner sep=2.5pt,outer sep=2.5pt}]
  \node (K3a) at (0,0) [dot] {};
  \node (K3b) at (0,1) [dot] {};
  \node (K3c) at (0,2) [dot] {};
  \draw[-] (K3a) edge (K3b);
  \draw[-] (K3b) edge (K3c);
  \draw (0,1.5) ellipse (0.5 and 1);
  \node at (0,-1) {$\algebra{LP_{3}}$};
\end{tikzpicture}
\qquad
\begin{tikzpicture}[scale=1,
  dot/.style={circle,fill,inner sep=2.5pt,outer sep=2.5pt}]
  \node (DM4a) at (0,-1) [dot] {};
  \node (DM4b) at (-1,0) [dot] {};
  \node (DM4c) at (1,0) [dot] {};
  \node (DM4d) at (0,1) [dot] {};
  \draw[-] (DM4a) edge (DM4b);
  \draw[-] (DM4a) edge (DM4c);
  \draw[-] (DM4b) edge (DM4d);
  \draw[-] (DM4c) edge (DM4d);
  \draw (0,1) ellipse (0.5 and 0.5);
  \node at (0,-2) {$\algebra{ETL_{4}}$};
\end{tikzpicture}
\end{center}

\end{figure}
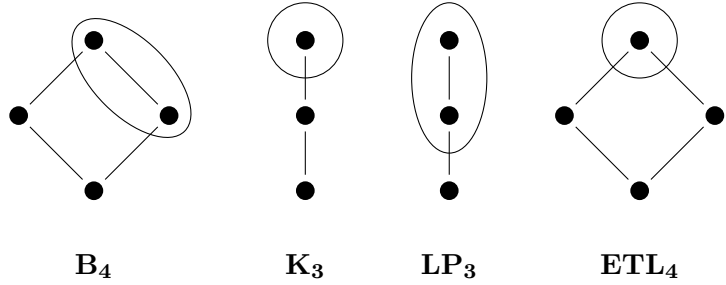

  The most important super-Belnap logics may now be introduced semantically as follows: $\B = \Log \Bmatrix$, $\LP = \Log \LPmatrix$, $\K = \Log \Kmatrix$, and $\ETL = \Log \ETLmatrix$. That is, $\ETL$ is the Exactly True Logic of Pietz and Rivieccio \cite{pietz+rivieccio13}. The matrix $\Bmatrix$ comes with a natural \emph{information order}, denoted $\infleq$, which then specializes to the two submatrices $\LPmatrix$ and $\Kmatrix$ of $\Bmatrix$. This order is obtained by reading the diagram of $\Bmatrix$ left-to-right rather than bottom-up. It extends to valuations $v, w: \FmAlgebra \rightarrow \Bmatrix$ in the natural way: $v \infleq w$ if and only if $v(p) \infleq w(p)$ for each atom $p$. A valuation $v: \FmAlgebra \rightarrow \Bmatrix$ is called \emph{classical} if its range is the two-element Boolean submatrix of $\Bmatrix$.

\begin{proposition} \label{prop:information monotone}
  The connectives of $\Bmatrix$, $\LPmatrix$, and $\Kmatrix$ are monotone with respect to the information order.
\end{proposition}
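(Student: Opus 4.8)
The plan is to read off monotonicity from the standard representation of the four values of $\Bmatrix$ as pairs $\langle a, b \rangle \in \{0, 1\}^2$, under which the information order $\infleq$ becomes simply the coordinatewise Boolean order. Since $\top$ and $\bot$ are nullary, monotonicity holds trivially for them, so only $\wedge$, $\vee$, and $\dmneg$ need attention. Reading the first coordinate as ``there is evidence for'' and the second as ``there is evidence against'', the four values are $\langle 0, 0 \rangle$ (the $\infleq$-least value), $\langle 1, 1 \rangle$ (the $\infleq$-greatest value), and $\top = \langle 1, 0 \rangle$, $\bot = \langle 0, 1 \rangle$ (the two $\infleq$-incomparable values, which sit at the top and bottom of the \emph{truth} order). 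In these coordinates the De~Morgan negation is the coordinate swap $\dmneg \langle a, b \rangle = \langle b, a \rangle$, and the truth-order meet and join act as $\langle a_1, b_1 \rangle \wedge \langle a_2, b_2 \rangle = \langle a_1 \wedge a_2, b_1 \vee b_2 \rangle$ and $\langle a_1, b_1 \rangle \vee \langle a_2, b_2 \rangle = \langle a_1 \vee a_2, b_1 \wedge b_2 \rangle$.

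With this setup the information order is exactly $\langle a_1, b_1 \rangle \infleq \langle a_2, b_2 \rangle$ iff $a_1 \leq a_2$ and $b_1 \leq b_2$, which one verifies against the diagram of $\Bmatrix$ by checking the four values directly. Monotonicity of the connectives with respect to $\infleq$ then reduces to three elementary facts about the two-element Boolean algebra: its meet and join are monotone, and the transposition of coordinates preserves the coordinatewise order. For instance, if $\langle a_1, b_1 \rangle \infleq \langle a_2, b_2 \rangle$ and $\langle c_1, d_1 \rangle \infleq \langle c_2, d_2 \rangle$, then $a_1 \wedge c_1 \leq a_2 \wedge c_2$ and $b_1 \vee d_1 \leq b_2 \vee d_2$, which is precisely $\langle a_1, b_1 \rangle \wedge \langle c_1, d_1 \rangle \infleq \langle a_2, b_2 \rangle \wedge \langle c_2, d_2 \rangle$; the case of $\vee$ is dual, and the case of $\dmneg$ is immediate from the fact that swapping coordinates is an automorphism of $\infleq$.

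It then remains to transfer the result to $\LPmatrix$ and $\Kmatrix$. These are the subalgebras of $\Bmatrix$ obtained by discarding $\langle 0, 0 \rangle$ and $\langle 1, 1 \rangle$ respectively, and their information orders are just the restrictions of $\infleq$. Since monotonicity of an operation on a subalgebra carrying the restricted order is inherited from the ambient algebra, nothing further needs to be checked. I do not anticipate any real obstacle: the computations are trivial once the representation is fixed. The only point deserving care is that the De~Morgan negation is \emph{antitone} for the truth order yet must come out \emph{monotone} for $\infleq$, and it is precisely the coordinate-swap description that reconciles these two facts; a reader who prefers to avoid the representation can instead confirm monotonicity by inspecting the finitely many covering pairs in each matrix, but the coordinatewise picture makes the uniform reason transparent.
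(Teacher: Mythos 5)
Your proof is correct. Note that the paper itself states this proposition without any proof, treating it as a standard fact about the Belnap bilattice structure, so there is nothing to compare against; your argument via the representation of $\Bmatrix$ as pairs in $\{0,1\}^{2}$ --- with $\infleq$ as the coordinatewise order, $\dmneg$ as the coordinate swap, and $\wedge$, $\vee$ computed by meet/join in the first coordinate and join/meet in the second --- is precisely the standard way of making this fact transparent, and all the details check out: the identification of the four values matches the paper's diagram (information order read left-to-right, truth order bottom-up, negation as reflection fixing the two middle elements), and the transfer to $\LPmatrix$ and $\Kmatrix$ by restriction to subalgebras is sound, since the paper defines their information orders exactly as the restrictions of $\infleq$.
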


  The information order on $\Bmatrix$, $\LPmatrix$, and $\Kmatrix$ is precisely the Leibniz order in the sense of Raftery \cite{raftery13} if we assign all polynomials to the positive polarity. By contrast, the Leibniz order on $\ETLmatrix$ is the identity relation.

  We now axiomatize the super-Belnap logics introduced above relative $\B$: the logic $\K$ is axiomatized by the rule of resolution $p \vee q, \dmneg q \vee r \vdash p \vee r$, the logic $\ETL$ is axiomatized by the rule of disjunctive syllogism $p, \dmneg p \vee q \vdash q$, the logic $\LP$ is axiomatized by the law of the excluded middle $\emptyset \vdash p \vee \dmneg p$. The logic $\Kleq = \K \cap \LP$, called Kleene's logic of order by Font \cite{font97}, is axiomatized by the rule $(p \wedge \dmneg p) \vee r \vdash (q \vee \dmneg q) \vee r$, as proved in \cite{albuquerque+prenosil+rivieccio16}. Moreover, we define $\ECQ$ as the extension of $\B$ by the rule $p, \dmneg p \vdash \emptyset$. It was shown in \cite{prenosil16} that $\ECQ$ is the logic of the matrix $\ETLmatrix \times \Bmatrix$.

  The join of two logics is defined as the least logic extending both of them. In particular, it is known that $\CL = \LP \vee \K = \LP \vee \ETL$.

\section{Gentzen calculi for super-Belnap logics}
\label{sec:gentzen-calculi}

  We now introduce a natural Gentzen calculus which is simply equivalent to the Dunn--Belnap logic $\B$ in the sense of Raftery and describe some of its extensions. We then consider the interderivability, admissibility, and antiadmissibility of the introduction and elimination rules for the logical connectives in the Gentzen calculi for $\CL$, $\LP$, and $\K$, and use these facts to provide a semantic proof of the admissibility of Cut and the antiadmissibility of Identity in the standard Gentzen calculus for classical logic.

  The Gentzen calculus $\GKforB$ defined by the rules in Figure \ref{fig:b-calculus} may be described simply as a standard Gentzen calculus for classical logic without the rules of Cut and Identity but with the inverse of each introduction rule for each logical connective or constant. For example, the notation
\begin{prooftree}
\def\fCenter{\seq}
\Axiom$\Gamma \fCenter \Delta, \varphi$
\Axiom$\Gamma \fCenter \Delta, \psi$
\doubleLine
\BinaryInf$\Gamma \fCenter \Delta, \varphi \wedge \psi$
\end{prooftree}
is meant to indicate that the calculus contains the three rules
\begin{prooftree}
\def\fCenter{\seq}
\Axiom$\Gamma \fCenter \Delta, \varphi$
\Axiom$\Gamma \fCenter \Delta, \psi$
\BinaryInf$\Gamma \fCenter \Delta, \varphi \wedge \psi$

\def\fCenter{\seq}
\Axiom$\Gamma \fCenter \Delta, \varphi \wedge \psi$
\UnaryInf$\Gamma \fCenter \Delta, \varphi$

\def\fCenter{\seq}
\Axiom$\Gamma \fCenter \Delta, \varphi \wedge \psi$
\UnaryInf$\Gamma \fCenter \Delta, \psi$

\noLine
\TrinaryInfC{}
\end{prooftree}
the first one being called the right introduction rule for conjunction and the second and third being called the right elimination rules for conjunction. The sequents $\emptyset \seq \top$ and $\bot \seq \emptyset$ are the only two axioms of this Gentzen calculus. The Gentzen relation axiomatized by $\GKforB$ will be called $\GB$.

\begin{figure}[t]
\caption{The Gentzen calculus $\GKforB$ for $\B$}
\label{fig:b-calculus}

\medskip

\textbf{Logical rules}

\begin{prooftree}
\def\fCenter{\seq}
\Axiom$\Gamma \fCenter \Delta, \varphi$
\Axiom$\Gamma \fCenter \Delta, \psi$
\doubleLine
\BinaryInf$\Gamma \fCenter \Delta, \varphi \wedge \psi$

\def\fCenter{\seq}
\Axiom$\varphi, \psi, \Gamma \fCenter \Delta$
\doubleLine
\UnaryInf$\varphi \wedge \psi, \Gamma \fCenter \Delta$

\noLine
\BinaryInfC{}
\end{prooftree}

\begin{prooftree}
\def\fCenter{\seq}
\Axiom$\varphi, \Gamma \fCenter \Delta$
\Axiom$\psi, \Gamma \fCenter \Delta$
\doubleLine
\BinaryInf$\varphi \vee \psi, \Gamma \fCenter \Delta$

\def\fCenter{\seq}
\Axiom$\Gamma \fCenter \Delta, \varphi, \psi$
\doubleLine
\UnaryInf$\Gamma \fCenter \Delta, \varphi \vee \psi$

\noLine
\BinaryInfC{}
\end{prooftree}

\begin{prooftree}
\def\fCenter{\seq}
\Axiom$\varphi, \Gamma \fCenter \Delta$
\doubleLine
\UnaryInf$\Gamma \fCenter \Delta, \dmneg \varphi$

\def\fCenter{\seq}
\Axiom$\Gamma \fCenter \Delta, \varphi$
\doubleLine
\UnaryInf$\dmneg \varphi, \Gamma \fCenter \Delta$

\noLine
\BinaryInfC{}
\end{prooftree}

\begin{prooftree}
\def\fCenter{\seq}

\Axiom$\emptyset \fCenter \top$

\Axiom$\top, \Gamma \fCenter \Delta$
\UnaryInf$\Gamma \fCenter \Delta$

\noLine
\BinaryInfC{}

\def\fCenter{\seq}
\Axiom$\Gamma \fCenter \Delta, \bot$
\UnaryInf$\Gamma \fCenter \Delta$

\Axiom$\bot \fCenter \emptyset$

\noLine
\BinaryInfC{}

\noLine
\BinaryInfC{}
\end{prooftree}

\textbf{Structural rules}

\begin{prooftree}
\def\fCenter{\seq}
\Axiom$\Gamma \fCenter \Delta$
\UnaryInf$\varphi, \Gamma \fCenter \Delta$

\def\fCenter{\seq}
\Axiom$\Gamma \fCenter \Delta$
\UnaryInf$\Gamma \fCenter \Delta, \varphi$

\noLine
\BinaryInfC{}

\def\fCenter{\seq}
\Axiom$\varphi, \varphi, \Gamma \fCenter \Delta$
\UnaryInf$\varphi, \Gamma \fCenter \Delta$

\def\fCenter{\seq}
\Axiom$\Gamma \fCenter \Delta, \varphi, \varphi$
\UnaryInf$\Gamma \fCenter \Delta, \varphi$

\noLine
\BinaryInfC{}

\noLine
\BinaryInfC{}
\end{prooftree}

\end{figure}

\begin{theorem}[\cite{pynko10}] \label{thm:hilbertizability}
  The Gentzen relation $\GB$ is simply equivalent to the Hilbert relation $\B$ via the transformer $\tautrans: \Gamma \seq \Delta \mapsto \{ \dmneg \bigwedge \Gamma \vee \bigvee \Delta \}$.
\end{theorem}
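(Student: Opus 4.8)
The plan is to verify directly the first and the fourth of the four conditions defining simple equivalence; the second and third then follow by the redundancy noted above. The fourth condition reads $\varphi \dashv\vdash_{\B} \tautrans\rhotrans(\varphi)$. Since $\rhotrans(\varphi) = \{\emptyset \seq \varphi\}$ and $\bigwedge \emptyset = \top$, we have $\tautrans\rhotrans(\varphi) = \{\dmneg\top \vee \varphi\} = \{\bot \vee \varphi\}$, and as $\bot$ denotes the bottom of the lattice in $\Bmatrix$ the formulas $\bot \vee \varphi$ and $\varphi$ receive the same value under every valuation; hence they are interderivable in $\B = \Log \Bmatrix$.

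The content of the theorem lies in the first condition, $\SequentSet \vdash_{\GB} \Gamma \seq \Delta$ iff $\tautrans[\SequentSet] \vdash_{\B} \tautrans(\Gamma \seq \Delta)$. I would first turn its right-hand side into a semantic statement. Say that a valuation $v \colon \FmAlgebra \to \Bmatrix$ is a \emph{model} of a sequent when it designates the unique formula that $\tautrans$ assigns to that sequent. Because $\B = \Log \Bmatrix$, the right-hand side asserts exactly that every valuation modelling all sequents in $\SequentSet$ also models $\Gamma \seq \Delta$. The first condition thus amounts to the soundness and completeness of the calculus $\GKforB$ with respect to this four-valued sequent semantics. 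Unwinding the definition of modelling, using that in $\Bmatrix$ a join is designated iff some disjunct is, a meet is designated iff both conjuncts are, and $\dmneg x$ is designated iff $x$ is false (its negation designated), one sees that $v$ models $\Gamma \seq \Delta$ precisely when some $\gamma \in \Gamma$ is false under $v$ or some $\delta \in \Delta$ is designated under $v$.

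Soundness I would establish by induction on $\GB$-derivations, checking the two axioms and each rule of Figure~\ref{fig:b-calculus}: the negation rules, the left conjunction rules and the right disjunction rules become literal identities after translation (they encode the De Morgan and double-negation laws), the weakening, contraction and exchange rules use only that the designated set is upward closed, and the genuinely two-premise rules together with the constant rules reduce to distributivity plus closure of the designated set under meets. For completeness I would exploit the elimination rules: since every logical introduction rule of $\GKforB$ comes paired with its inverse, each is invertible, so any sequent is interderivable in $\GB$ with the finite set of atomic sequents obtained by exhaustively decomposing its connectives, the constant rules either deleting a constant or rendering the sequent provable outright; denote this decomposition by $\AtSeq{\cdot}$. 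Hence $\SequentSet \vdash_{\GB} \Gamma \seq \Delta$ reduces to $\AtSeq{\SequentSet} \vdash_{\GB} \AtSeq{\Gamma \seq \Delta}$, where $\AtSeq{\SequentSet}$ collects the decompositions of the members of $\SequentSet$, and likewise on the semantic side (via soundness applied to these interderivabilities), so it suffices to treat atomic sequents.

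So suppose some atomic sequent $\Pi \seq \Sigma$ in $\AtSeq{\Gamma \seq \Delta}$ is not derivable from $\AtSeq{\SequentSet}$. As any weakening of a premise is already derivable by the structural rules, $\Pi \seq \Sigma$ is a weakening of no premise; that is, every atomic premise has an antecedent atom outside the support of $\Pi$ or a succedent atom outside the support of $\Sigma$. I then define a counter-valuation $v$ on atoms by assigning the value that is true but not false to atoms occurring only in $\Pi$, the value that is false but not true to atoms occurring only in $\Sigma$, the value that is neither to atoms occurring in both, and, crucially, the value that is both true and false to every remaining atom. Then $v$ does not model $\Pi \seq \Sigma$, whereas every premise is modelled: a premise mentioning an atom outside the support of $\Pi \seq \Sigma$ is witnessed by that atom, since the both value is at once true and false, and a premise confined to that support is, by the failure of weakening, witnessed either by an antecedent atom that is false or by a succedent atom that is designated. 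This refutes the semantic consequence and, through the reduction, completes the completeness proof. The main obstacle is precisely this counter-valuation: everything turns on the availability of the fourth, both, value, which can witness a premise on either side and is exactly what makes a four-valued refutation succeed where a two- or three-valued one would fail; the bookkeeping in the reduction to atomic sequents, namely termination of the decomposition and correct absorption of the constants, is the secondary point needing care, but is routine given the invertibility supplied by the elimination rules.
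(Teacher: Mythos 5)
The paper never proves this theorem: it is imported wholesale from Pynko \cite{pynko10}, so there is no internal proof to compare yours against, and your argument has to be judged on its own terms. On those terms it is correct. Verifying only the first and fourth conditions and invoking the stated redundancy of the other two is legitimate, and your check of the fourth condition is fine. Your semantic unwinding of $\tautrans$ is the right one: the designated set of $\Bmatrix$ is a prime filter, and $\dmneg$ exchanges it with the prime ideal of ``false'' values, so a valuation designates $\dmneg \bigwedge \Gamma \vee \bigvee \Delta$ exactly when some member of $\Gamma$ is false or some member of $\Delta$ is designated; with that, the rule-by-rule soundness check is indeed routine (the two-premise rules need only that designated values are closed under meets and false values under joins -- distributivity is not actually used), and it also explains why Identity and Cut are \emph{not} sound, which is a good sanity check. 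The reduction to atomic sequents via invertibility of the logical rules is precisely the paper's Proposition \ref{prop:sequent-composition-decomposition}, so you are re-deriving a tool the paper itself supplies, and your $\AtSeq{\cdot}$ matches the apparatus the paper sets up later in Section \ref{sec:analytic--synthetic}. The heart of the matter is the counter-valuation, and it works: giving the ``neither'' value to atoms on both sides of the underivable atomic sequent, true/false to atoms occurring on only one side, and the ``both'' value to every remaining atom refutes that sequent while modelling each atomic premise -- premises mentioning an atom outside the support are witnessed by a ``both'' atom, and premises confined to the support are witnessed through the failure of weakening. Two small points that your write-up does survive but should stay explicit in a final version: ``weakening of a premise'' must be read at the level of underlying sets, with Contraction as well as Weakening absorbing multiplicities (sequents are multisets), and the transfer between $\SequentSet, \Gamma \seq \Delta$ and their atomic decompositions needs soundness applied to the interderivabilities in both directions, which you note parenthetically.
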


  The transformers $\tautrans: \Gamma \seq \Delta \mapsto \{ \dmneg \bigwedge \Gamma \vee \bigvee \Delta \}$ and $\rhotrans: \varphi \mapsto \{ \emptyset \seq \varphi \}$ therefore define two mutually inverse isomorphisms between the lattice of Hilbert relations extending $\B$ and the lattice of Gentzen relations extending $\GB$, and moreover these isomorphisms preserve finitarity \cite[Corollary 7.5]{raftery06}. In other words, when we talk about (finitary) super-Belnap logics, we may as well be talking about (finitary) Gentzen relations extending $\GB$.

  Moreover, it takes but a moment's reflection to see that each extension of $\GB$ may in fact be axiomatized by \emph{structural rules}, i.e.\ rules which do not contain any occurrence of a logical connective or constant. Note that what we call Cut, Weakening, and Contraction are strictly speaking sets of structural rules. For example, Contraction is a set of rules which includes the rules $p, p \seq q \vdash p \seq q$ and $p, q, q \seq r \vdash p, q \seq r$ etc.

\begin{proposition} \label{prop:sequent-composition-decomposition}
  Each sequent is equivalent in the Gentzen relation $\GB$ to a finite set of atomic sequents. Each finite set of sequents is equivalent in the Gentzen relation $\GB$ to a single sequent of the form $\emptyset \seq \varphi$.
\end{proposition}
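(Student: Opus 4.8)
The plan is to exploit the fact that every logical rule of $\GKforB$ is presented as a double-line rule, so that in each case the premises and the conclusion are interderivable in $\GB$; decomposing a sequent then amounts to reading these rules upwards. Before starting I would record that mutual derivability of sets of sequents is a transitive relation: if a set $S$ is equivalent to $T$ and $T$ is equivalent to $U$, then $S \vdash_{\GB} u$ for every $u \in U$ follows from $T \vdash_{\GB} u$ and $S \vdash_{\GB} t$ for all $t \in T$ by the Cut condition on $\GB$, and symmetrically for $U \vdash_{\GB} s$. This is what lets me chain together the local decomposition steps below. I would also note that, since sequents are multisets, a principal formula may always be assumed to occur in the position matched by a rule.

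For the first claim I would argue by induction on the total number of occurrences of connectives and constants in the sequent $\Gamma \seq \Delta$. If this number is $0$ the sequent is already atomic and is equivalent to the singleton containing itself. Otherwise I pick a non-atomic formula or a constant and decompose it. A principal $\wedge$, $\vee$, or $\dmneg$ is handled by the corresponding double-line rule: for instance $\Gamma \seq \Delta, \varphi \wedge \psi$ is equivalent to the pair $\{ \Gamma \seq \Delta, \varphi \,;\, \Gamma \seq \Delta, \psi \}$, the elimination rules giving one direction and the introduction rule the other, while a left conjunction, a right disjunction, and either negation each reduce to a single sequent of strictly lower complexity. The constants require a little more care because their rules are single-line: $\top$ on the left and $\bot$ on the right are removed using the rules $\top, \Gamma \seq \Delta \vdash \Gamma \seq \Delta$ and $\Gamma \seq \Delta, \bot \vdash \Gamma \seq \Delta$, the reverse directions being instances of Weakening; whereas a sequent carrying $\top$ on the right or $\bot$ on the left is provable outright from the axioms $\emptyset \seq \top$, $\bot \seq \emptyset$ together with Weakening, hence equivalent to the empty set of atomic sequents. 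In every case the new sequents have fewer connective and constant occurrences, so the induction hypothesis and transitivity of equivalence finish the argument.

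For the second claim I would first reduce a single sequent $\Gamma \seq \Delta$ to the form $\emptyset \seq \varphi$. Applying the negation rule $\varphi, \Gamma \seq \Delta \dashv\vdash_{\GB} \Gamma \seq \Delta, \dmneg \varphi$ repeatedly moves every left-hand formula to the right, yielding an equivalent sequent $\emptyset \seq \Theta$; fusing the members of $\Theta$ by the double-line right disjunction rule then produces an equivalent $\emptyset \seq \varphi$, which simply recovers the transformer $\tautrans$. The only exception is the empty sequent $\emptyset \seq \emptyset$, which is equivalent to $\emptyset \seq \bot$ via the $\bot$-right rule and Weakening. Given now a finite set $\{ \Gamma_1 \seq \Delta_1, \dots, \Gamma_k \seq \Delta_k \}$, by the previous step and transitivity it is equivalent to $\{ \emptyset \seq \varphi_1, \dots, \emptyset \seq \varphi_k \}$, and the double-line right conjunction rule, applied with empty context, fuses this into the single sequent $\emptyset \seq \varphi_1 \wedge \dots \wedge \varphi_k$. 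The remaining edge case is the empty set of sequents, which is equivalent to $\{ \emptyset \seq \top \}$ since $\emptyset \seq \top$ is an axiom.

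The genuinely routine part is the treatment of the connectives, where invertibility is immediate from the double-line presentation. The points demanding attention are instead the constants and the empty multisets: because the rules for $\top$ and $\bot$ are single-line rather than double-line, I cannot appeal to invertibility there and must argue separately, using Weakening together with the two axioms, that the sequents in question are either reducible or outright provable. The other thing worth being careful about is the legitimacy of stringing the local steps together, which is precisely why I would isolate the transitivity of set-equivalence, a consequence of Cut in $\GB$, at the very outset.
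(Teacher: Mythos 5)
Your proof is correct and follows essentially the same route as the paper, which dispenses with this proposition in one line as ``a straightforward induction over the complexity of the sequent or the finite set of sequents'': your induction on connective and constant occurrences, driven by the invertibility of the double-line rules, is exactly that induction spelled out, and your extra care with the single-line $\top$/$\bot$ rules, the empty sequent, and the empty set of sequents correctly handles the edge cases the paper leaves implicit.
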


\begin{proof}
  Both of these claims may be proved by a straightforward induction over the complexity of the sequent or the finite set of sequents.
\end{proof}

\begin{proposition} \label{prop:structural-axiomatization}
  Each (finitary) extension of $\GB$ may be axiomatized as an extension of the calculus $\GKforB$ by a set of (finitary) structural rules.
\end{proposition}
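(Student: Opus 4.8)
The plan is to reduce the statement to Proposition~\ref{prop:sequent-composition-decomposition} by means of a general principle about replacing a rule by equivalent rules over $\GB$. First I would record the following modularity observation, immediate from the closure conditions defining Gentzen relations: if a single rule $r$ is equivalent over $\GB$ to a set of rules $R_r$ (meaning that $\GB$ extended by $r$ and $\GB$ extended by $R_r$ axiomatize one and the same Gentzen relation), then for any family $\{r_i\}$ the relation axiomatized by $\GKforB$ together with $\{r_i\}$ coincides with the one axiomatized by $\GKforB$ together with $\bigcup_i R_{r_i}$. Since every extension of $\GB$ is axiomatized by $\GKforB$ together with some set of rules (for instance the set of all rules it validates), it therefore suffices to show that \emph{each individual rule is equivalent over $\GB$ to a set of structural rules}.

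So fix a rule $S \vdash c$, where $S$ is a set of sequents and $c$ a sequent. By Proposition~\ref{prop:sequent-composition-decomposition} the conclusion $c$ is equivalent in $\GB$ to a finite set of atomic sequents $C = \{a_1, \dots, a_n\}$, and likewise each premise $s \in S$ is equivalent to a finite set of atomic sequents $D(s)$; I set $S_{\mathrm{at}} = \bigcup_{s \in S} D(s)$, which is then equivalent as a set of sequents to $S$. I would then claim that $S \vdash c$ is equivalent over $\GB$ to the set of \emph{structural} rules $\{\, S_{\mathrm{at}} \vdash a_j : 1 \le j \le n \,\}$. For one direction, assuming $S \vdash c$ and given $S_{\mathrm{at}}$, I derive every $s \in S$ using $S_{\mathrm{at}} \vdash_{\GB} s$ and monotonicity, apply the rule to obtain $c$, and recover each $a_j$ from $c \vdash_{\GB} a_j$ by cut. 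The converse is dual: assuming all the rules $S_{\mathrm{at}} \vdash a_j$ and given $S$, I derive every sequent of $S_{\mathrm{at}}$ from $S \vdash_{\GB} a$, apply these rules to obtain all of $C$, and recover $c$ from $C \vdash_{\GB} c$.

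Running this replacement over an arbitrary axiomatization of a given extension $\GentzenRelation\logic{L}$ of $\GB$ by $\GKforB$ plus extra rules then yields a purely structural set of added rules, and by the modularity observation the resulting calculus again axiomatizes $\GentzenRelation\logic{L}$. For the finitary refinement I would note that a finitary Gentzen relation is determined by its rules with finite premise sets, and that for such a rule the sets $C$ and each $D(s)$, hence also $S_{\mathrm{at}}$, are finite by Proposition~\ref{prop:sequent-composition-decomposition}; the structural rules produced above are therefore finitary, so extending $\GKforB$ by them once more axiomatizes $\GentzenRelation\logic{L}$.

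The genuinely new content here is slight, since the actual decomposition of formulas into atoms is carried out by Proposition~\ref{prop:sequent-composition-decomposition}; the only point requiring real care is the treatment of the \emph{premises}. Unlike the conclusion, a premise can be traded for its atomic decomposition only because the equivalence furnished by Proposition~\ref{prop:sequent-composition-decomposition} runs in both directions, and both directions are genuinely used — together with monotonicity and the cut condition on the Gentzen relation — in verifying that the replacement leaves the axiomatized relation unchanged. A secondary bookkeeping point is that a single conclusion $c$ generally decomposes into several atomic sequents, so that one rule is in general replaced by several structural rules.
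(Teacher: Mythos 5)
Your proposal is correct and takes essentially the same approach as the paper: the paper's proof likewise reduces the claim to showing that each individual Gentzen rule $\set{\Gamma_i \seq \Delta_i}{i \in I} \vdash \Gamma \seq \Delta$ is equivalent in $\GB$ to structural rules, and uses Proposition \ref{prop:sequent-composition-decomposition} to replace it by the rules $\bigcup_{i \in I} \SequentSet_i \vdash \Lambda \seq \Pi$ for $\Lambda \seq \Pi$ ranging over the atomic decomposition of the conclusion --- exactly your $\{\, S_{\mathrm{at}} \vdash a_j \,\}$, with the same treatment of finitarity. The only difference is that you make explicit the modularity observation and the two-directional verification of the equivalence (via monotonicity and cut), which the paper leaves implicit.
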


\begin{proof}
  It suffices to show that each (finitary) Gentzen rule is equivalent in $\GB$ to a (finitary) structural rule. Let $\set{\Gamma_{i} \seq \Delta_{i}}{i \in I} \vdash \Gamma \seq \Delta$ be a Gentzen rule. By Proposition \ref{prop:sequent-composition-decomposition} there is for each $i \in I$ a finite set of atomic sequents $\SequentSet_{i}$ equivalent to $\Gamma_{i} \seq \Delta_{i}$ and there is a finite set of atomic sequents $\SequentSet$ equivalent to $\Gamma \seq \Delta$. Then the Gentzen rule in question is equivalent to the finite set of rules $\bigcup_{i \in I} \SequentSet_{i} \vdash \Lambda \seq \Pi$ for $\Lambda \seq \Pi \in \SequentSet$. Moreover, these rules are finitary if the Gentzen rule in question is finitary.
\end{proof}

  If a Hilbert relation $\logic{L}$ is the extension of $\B$ by the rules $\Gamma_{i} \vdash \varphi_{i}$ for $i \in I$, then the rules $\rhotrans[\Gamma_{i}] \vdash \rhotrans(\varphi_{i})$ axiomatize the corresponding Gentzen relation $\GentzenRelation \logic{L}$ extending $\GentzenRelation \logic{B}$ by \cite[Theorem 7.1]{raftery06}. Proposition \ref{prop:structural-axiomatization} now claims that we may always simplify such an axiomatization to a structural axiomatization. For example, the rule $p \wedge \dmneg p \vdash q \vee \dmneg q$ axiomatizing the logic $\LP \cap \ECQ$ corresponds to the Gentzen rule (schema)
\begin{prooftree}
\def\fCenter{\seq}
\Axiom$\emptyset \fCenter \varphi$
\Axiom$\varphi \fCenter \emptyset$
\BinaryInf$\psi \fCenter \psi$
\end{prooftree}
while the rule $(p \wedge \dmneg p) \vee r \vdash (q \vee \dmneg q) \vee r$ axiomatizing the logic $\Kleq$ corresponds to the Gentzen rule (schema)
\begin{prooftree}
\def\fCenter{\seq}
\Axiom$\Gamma \fCenter \Delta, \varphi$
\Axiom$\varphi, \Gamma \fCenter \Delta$
\BinaryInf$\psi, \Gamma \fCenter \Delta, \psi$
\end{prooftree}
which may be seen as a combination of Identity and Cut.

  Extensions of the Gentzen calculus $\GKforB$ by a set of structural rules will be called \emph{super-Belnap (Gentzen) calculi}. Our goal here is to develop the rudiments of the proof theory of super-Belnap calculi. The rules which each super-Belnap calculus has in addition to the rules of $\GKforB$ will be called its \emph{specific structural rules}, as opposed to the \emph{common structural rules} of Weakening and Contraction shared by all super-Belnap calculi.

  Note that although the introduction and elimination rules of super-Belnap calculi are finitary, we do not exclude non-finitary structural rules. (The existence of non-finitary super-Belnap logics was proved in \cite{prenosil16}.) Proofs in such calculi are well-founded trees, i.e.\ trees with no infinite branches.

\begin{figure}[t]
\caption{Some specific structural rules}
\label{fig:specific-structural-rules}

\medskip

\textbf{Identity}

\begin{prooftree}
\def\fCenter{\seq}
\Axiom$\varphi \fCenter \varphi$
\end{prooftree}

\textbf{Cut}

\begin{prooftree}
\def\fCenter{\seq}
\Axiom$\Gamma \fCenter \Delta, \varphi$
\Axiom$\varphi, \Gamma' \fCenter \Delta$
\BinaryInf$\Gamma, \Gamma' \fCenter \Delta, \Delta'$
\end{prooftree}

\textbf{Limited Cut}

\begin{prooftree}
\def\fCenter{\seq}
\Axiom$\emptyset \fCenter \varphi$
\Axiom$\varphi, \Gamma \fCenter \Delta$
\BinaryInf$\Gamma \fCenter \Delta$

\Axiom$\Gamma \fCenter \Delta, \varphi$
\Axiom$\varphi \fCenter \emptyset$
\BinaryInf$\Gamma \fCenter \Delta$

\noLine
\BinaryInfC{}
\end{prooftree}

\textbf{Explosive Cut}

\begin{prooftree}
\def\fCenter{\seq}
\Axiom$\emptyset \fCenter \varphi$
\Axiom$\varphi \fCenter \emptyset$
\BinaryInf$\emptyset \fCenter \emptyset$
\end{prooftree}

\end{figure}

  Some of the important specific structural rules are listed in Figure \ref{fig:specific-structural-rules}. Identity corresponds to the rule $\emptyset \vdash p \vee \dmneg p$ which axiomatizes $\LP$, while Cut corresponds to the rule $p \vee \dmneg q, q \vee r \vdash p \vee r$ which axiomatizes $\K$. Limited Cut in either of the two equivalent forms corresponds to the rule $p, \dmneg p \vee q \vdash q$ which axiomatizes $\ETL$, and Explosive Cut corresponds to the rule $p, \dmneg p \vdash \emptyset$ which axiomatizes $\ECQ$. Adding these rules to $\GKforB$ yields the calculi $\GKforLP$, $\GKforK$, $\GKforETL$, and $\GKforECQ$ which axiomatize the Gentzen versions of these logics.

\begin{proposition} \label{prop:cut-from-limited-cut}
  Cut is a derivable rule in each super-Belnap calculus which contains both Identity and Limited Cut.
\end{proposition}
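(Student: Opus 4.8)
The plan is to give an explicit derivation of Cut from its two premises $\Gamma \seq \Delta, \varphi$ and $\varphi, \Gamma' \seq \Delta'$ using only the rules of $\GKforB$ together with Identity and Limited Cut, never full Cut. The guiding idea is to replace the cut formula $\varphi$ by the single formula $\chi := (\dmneg \bigwedge \Gamma \vee \bigvee \Delta) \vee \varphi$, which is exactly $\tautrans(\Gamma \seq \Delta, \varphi)$ in the sense of Theorem \ref{thm:hilbertizability}. Absorbing the left context $\Gamma$ and the extra right formulas $\Delta$ into the cut formula is what allows us to use Limited Cut, whose defining feature is that it tolerates a context only on one side. Concretely, I aim to apply Limited Cut in the form $\frac{\emptyset \seq \chi \quad \chi, \Sigma \seq \Pi}{\Sigma \seq \Pi}$ with $\Sigma = \Gamma, \Gamma'$ and $\Pi = \Delta, \Delta'$.

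First I would produce the left premise $\emptyset \seq \chi$. Starting from $\Gamma \seq \Delta, \varphi$, I combine $\Gamma$ into $\bigwedge \Gamma$ by the conjunction-left rule, pass it to the right as $\dmneg \bigwedge \Gamma$ by the negation-right rule, and gather the whole right-hand side into a single disjunction by the disjunction-right rule. This uses only the invertible logical rules of $\GKforB$ (equivalently, the transformations underlying Proposition \ref{prop:sequent-composition-decomposition}), and in particular no instance of Identity or Cut.

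The main work is the right premise $\chi, \Gamma, \Gamma' \seq \Delta, \Delta'$. Here I apply disjunction-left to split $\chi$ into $\dmneg \bigwedge \Gamma \vee \bigvee \Delta$ and $\varphi$. The $\varphi$-branch $\varphi, \Gamma, \Gamma' \seq \Delta, \Delta'$ follows from the second premise $\varphi, \Gamma' \seq \Delta'$ by Weakening. The other branch reduces to $(\dmneg \bigwedge \Gamma \vee \bigvee \Delta), \Gamma \seq \Delta$ (then Weakened to the full context), and this is precisely where Identity is indispensable: a further disjunction-left reduces it to $\dmneg \bigwedge \Gamma, \Gamma \seq \Delta$ and $\bigvee \Delta, \Gamma \seq \Delta$; the first becomes $\Gamma \seq \Delta, \bigwedge \Gamma$ after negation-left and conjunction-right, and the second decomposes by disjunction-left, so that everything reduces to the identity sequents $c \seq c$ for $c \in \Gamma$ and $d \seq d$ for $d \in \Delta$, padded out by Weakening. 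In other words, this branch is exactly the statement that $\tautrans(\Gamma \seq \Delta)$ re-entails $\Gamma \seq \Delta$, and that re-entailment is available only because Identity is present. This is the heart of the argument and the step I expect to require the most care to set up cleanly.

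Finally, one application of Limited Cut to $\emptyset \seq \chi$ and $\chi, \Gamma, \Gamma' \seq \Delta, \Delta'$ delivers the conclusion $\Gamma, \Gamma' \seq \Delta, \Delta'$. The only genuine bookkeeping obstacle is the degenerate cases where $\Gamma$ or $\Delta$ is empty, so that $\bigwedge \Gamma$ or $\bigvee \Delta$ must be read as $\top$ or $\bot$ and the corresponding disjunct of $\chi$ disappears; these are handled routinely by the $\top$ and $\bot$ rules of $\GKforB$. I would also note, as a non-constructive sanity check, that the result is already implicit in the remarks following Proposition \ref{prop:structural-axiomatization}: Identity and Limited Cut axiomatize $\LP$ and $\ETL$, so $\GKforB$ together with both axiomatizes the Gentzen relation of $\LP \vee \ETL = \CL$, which extends $\K$, the logic axiomatized by Cut; hence Cut is valid in that relation. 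The explicit derivation above is preferable, however, since it stays entirely within the proof theory and exhibits an actual proof schema.
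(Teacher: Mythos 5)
Your derivation is correct, but it takes a genuinely different route from the paper: the paper's entire proof of this proposition is the one-line reminder that $\CL = \LP \vee \K = \LP \vee \ETL$, i.e.\ precisely the ``non-constructive sanity check'' you relegate to your closing remarks. In the paper's argument, the lattice isomorphism between super-Belnap logics and Gentzen relations extending $\GB$ (Theorem \ref{thm:hilbertizability} and the discussion following it) does all the work: a super-Belnap calculus containing Identity and Limited Cut axiomatizes a Gentzen relation extending $\GLP \vee \GETL = \GCL \supseteq \GK$, and Cut, being valid in $\GK$, is therefore derivable in any such calculus. Your main argument instead exhibits an explicit derivation schema, and it checks out: $\emptyset \seq \chi$ with $\chi = (\dmneg\bigwedge\Gamma \vee \bigvee\Delta) \vee \varphi$ follows from the first premise $\Gamma \seq \Delta, \varphi$ by introduction rules alone; $\chi, \Gamma, \Gamma' \seq \Delta, \Delta'$ follows by disjunction-left, using the second premise plus Weakening on the $\varphi$-branch and Identity plus Weakening and the logical rules on the other branch (you are right that Identity is indispensable there --- already $(\dmneg p \vee q), p \seq q$ is derivable in $\GLP$ but not in $\GB$, since $\tautrans$ of it is an $\LP$-theorem but not a $\B$-theorem); a single instance of the left-hand form of Limited Cut then yields $\Gamma, \Gamma' \seq \Delta, \Delta'$. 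The trade-off is clear: the paper's proof is shorter and leans on general theory, but it is purely semantic and yields no procedure, whereas yours is constructive, producing an actual finite proof for each Cut instance and making visible \emph{why} Limited Cut suffices --- the context is absorbed into the cut formula, which is exactly the trick the transformer $\tautrans$ performs. One bookkeeping caveat: you use only the first of the two forms of Limited Cut in Figure \ref{fig:specific-structural-rules}; if a calculus were taken to contain only the second form, the dual construction with cut formula $\varphi \wedge (\bigwedge\Gamma' \wedge \dmneg\bigvee\Delta')$ works symmetrically, so nothing is lost.
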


\begin{proof}
  Recall that $\CL = \LP \vee \K = \LP \vee \ETL$.
\end{proof}

  We now turn our attention to the super-Belnap calculi for the best known extensions of $\B$, namely $\CL$, $\LP$, and $\K$. It turns out that for $\CL$ we may pick and choose any combination of introduction and elimination rules, provided we include at least one of these for each connective. For $\LP$ we may do without the elimination rules provided that we are only interested in which sequents are derivable from the empty set of sequents, and dually for $\K$ we may do without the introduction rules provided that we are only interested in sets of sequents from which the empty sequent is derivable.

\begin{proposition} \label{prop:intro-and-elim-equivalent}
   In the presence of Identity and Cut, the left (right) intro\-duction and elimination rules for each connective are interderivable.
\end{proposition}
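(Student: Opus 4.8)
The plan is to reduce the interderivability of each introduction rule and the matching elimination rule to the derivability of a small number of \emph{characteristic sequents} for each connective, and then to show that Identity produces these characteristic sequents from the rules while Cut reconstructs the rules from the characteristic sequents. I will carry out conjunction in detail; disjunction is order-dual, negation is handled by the same method with $\seq \varphi, \dmneg \varphi$ and $\varphi, \dmneg \varphi \seq$ in place of the characteristic sequents below, and the truth constants are immediate (e.g.\ applying the rule $\top, \Gamma \seq \Delta \mapsto \Gamma \seq \Delta$ to the identity sequent $\top \seq \top$ yields the axiom $\emptyset \seq \top$, and Cutting with that axiom recovers the rule).

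For conjunction the characteristic sequents are the projections $\varphi \wedge \psi \seq \varphi$ and $\varphi \wedge \psi \seq \psi$ on the one hand, and the pairing $\varphi, \psi \seq \varphi \wedge \psi$ on the other. First I observe that each of the four conjunction rules yields one of these sequents when applied to an instance of Identity, possibly after Weakening: applying the left introduction rule to the weakened identities $\varphi, \psi \seq \varphi$ and $\varphi, \psi \seq \psi$ gives the projections, as does applying the right elimination rule to $\varphi \wedge \psi \seq \varphi \wedge \psi$; applying the right introduction rule to $\varphi, \psi \seq \varphi$ and $\varphi, \psi \seq \psi$ gives the pairing sequent, as does applying the left elimination rule to $\varphi \wedge \psi \seq \varphi \wedge \psi$. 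Conversely, each rule is recovered from the relevant characteristic sequent by one or two applications of Cut, absorbing duplicated side formulas by Contraction: from the pairing $\varphi, \psi \seq \varphi \wedge \psi$ and a premise $\varphi \wedge \psi, \Gamma \seq \Delta$, a single Cut on $\varphi \wedge \psi$ yields $\varphi, \psi, \Gamma \seq \Delta$ (the left elimination rule), and it also reconstructs the right introduction rule by Cutting it against $\Gamma \seq \Delta, \varphi$ and $\Gamma \seq \Delta, \psi$; dually, from the projections and a premise $\varphi, \psi, \Gamma \seq \Delta$, two Cuts followed by Contraction yield $\varphi \wedge \psi, \Gamma \seq \Delta$ (the left introduction rule), while one Cut against $\Gamma \seq \Delta, \varphi \wedge \psi$ reconstructs the right elimination rule. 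Thus, modulo Identity, Cut, Weakening, and Contraction, each conjunction rule is equivalent to one of the two characteristic sequents, and the claimed interderivability follows; consequently the package of left rules and the package of right rules axiomatise the same extension.

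The step worth flagging is that the characteristic sequents pair the rules \emph{across} sides rather than within a single side: the pairing sequent is produced by, and reproduces, the right introduction rule together with the left elimination rule, whereas the projection sequents play the same role for the left introduction rule and the right elimination rule. This crossing is unavoidable, since a principal formula can be created only on the side on which a rule acts, so an elimination rule cannot be obtained from the introduction rule on the same side by Cut alone; it is precisely the interplay of Identity (to seed a characteristic sequent from a given rule) and Cut (to transport that sequent into an arbitrary context) that makes the exchange possible, and dropping either one breaks the argument. The remaining work is pure bookkeeping: Weakening supplies the contexts needed before applying a rule to an identity axiom, and Contraction merges the two copies of the context $\Gamma \seq \Delta$ left behind by successive Cuts. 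Since both are among the common structural rules of every super-Belnap calculus, the argument goes through uniformly.
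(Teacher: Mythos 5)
Your proposal is correct and takes essentially the same route as the paper: the paper's four derivation trees are exactly the compositions of your two halves, with Identity (plus Weakening) producing the sequents $\varphi \wedge \psi \seq \varphi$, $\varphi \wedge \psi \seq \psi$, and $\varphi, \psi \seq \varphi \wedge \psi$ as intermediate nodes, and Cut plus Contraction transporting them into an arbitrary context. Your explicit naming of these characteristic sequents, and your remark that the pairing crosses sides (left introduction with right elimination, left elimination with right introduction), is only a presentational sharpening of what the paper's trees actually establish.
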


\begin{proof}
  We only prove the claim for conjunction, as the argument for dis\-junction is dual and the arguments for negation and for the truth constants are simpler. To simulate the left intro\-duction rule by the left elimination rule, we use the following strategy:
\begin{prooftree}
\def\fCenter{\seq}

\Axiom$\varphi \wedge \psi \fCenter \varphi \wedge \psi$
\UnaryInf$\varphi \wedge \psi \fCenter \psi$

\Axiom$\varphi \wedge \psi \fCenter \varphi \wedge \psi$
\UnaryInf$\varphi \wedge \psi \fCenter \varphi$

\Axiom$\varphi, \psi, \Gamma \fCenter \Delta$
\BinaryInf$\varphi \wedge \psi, \psi, \Gamma \fCenter \Delta$

\BinaryInf$\varphi \wedge \psi, \varphi \wedge \psi, \Gamma \fCenter \Delta$
\UnaryInf$\varphi \wedge \psi, \Gamma \fCenter \Delta$
\end{prooftree}
\medskip
The left elimination rule may then be simulated by the following proof:
\begin{prooftree}
\def\fCenter{\seq}

\Axiom$\varphi \fCenter \varphi$
\UnaryInf$\varphi, \psi \fCenter \varphi$
\Axiom$\psi \fCenter \psi$
\UnaryInf$\varphi, \psi \fCenter \psi$
\BinaryInf$\varphi, \psi \fCenter \varphi \wedge \psi$
\Axiom$\varphi \wedge \psi, \Gamma \fCenter \Delta$
\BinaryInf$\varphi, \psi, \Gamma \fCenter \Delta$
\end{prooftree}
To simulate the right introduction rule for conjunction by the left elimination rule, we use the following strategy:
\begin{prooftree}
\def\fCenter{\seq}

\Axiom$\Gamma \fCenter \Delta, \psi$

\Axiom$\Gamma \fCenter \Delta, \varphi$

\Axiom$\varphi \wedge \psi \fCenter \varphi \wedge \psi$
\UnaryInf$\varphi, \psi \fCenter \varphi \wedge \psi$

\BinaryInf$\psi, \Gamma \fCenter \Delta, \varphi \wedge \psi$
\BinaryInf$\Gamma, \Gamma \fCenter \Delta, \Delta, \varphi \wedge \psi$
\UnaryInf$\Gamma \fCenter \Delta, \varphi \wedge \psi$
\end{prooftree}
\medskip
The right elimination rules may then be simulated by the following proofs:
\begin{prooftree}
\def\fCenter{\seq}

\Axiom$\Gamma \fCenter \Delta, \varphi \wedge \psi$
\Axiom$\varphi \fCenter \varphi$
\UnaryInf$\varphi, \psi \fCenter \varphi$
\UnaryInf$\varphi \wedge \psi \fCenter \varphi$
\BinaryInf$\Gamma \fCenter \Delta, \varphi$

\Axiom$\Gamma \fCenter \Delta, \varphi \wedge \psi$
\Axiom$\psi \fCenter \psi$
\UnaryInf$\varphi, \psi \fCenter \psi$
\UnaryInf$\varphi \wedge \psi \fCenter \psi$
\BinaryInf$\Gamma \fCenter \Delta, \psi$

\noLine
\BinaryInfC{}
\end{prooftree}

\end{proof}

  We need both Identity and Cut to establish the interderivability of the intro\-duction and elimination rules. However, if we are only interested in the admissibility (antiadmissibility) of the elimination (introduction) rules, the presence of Cut (Identity) is not needed. This claim (for admissibility) is called the Inversion Lemma by Troelstra and Schwichtenberg \cite{basicprooftheory}, and it constitutes a step in the standard proof of cut elimination for classical logic.

\begin{proposition}[\cite{basicprooftheory}, Proposition 3.5.4, p.~79] \label{prop:elim-admissible}
  Each elimination rule is admissible in the calculus which contains the common structural rules, the introduction rules for all connectives, and Identity.
\end{proposition}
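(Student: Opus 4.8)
The plan is to argue by induction on the height of a derivation of the premise of the elimination rule in the calculus $\mathcal{C}$ consisting of Weakening, Contraction, all introduction rules, and Identity. Since $\mathcal{C}$ contains no elimination rules, the last rule of such a derivation is one of the axioms ($\varphi \seq \varphi$, $\emptyset \seq \top$, or $\bot \seq \emptyset$), a common structural rule, or an introduction rule. I would carry out the argument in detail for the right elimination rule for conjunction, passing from a derivation of $\Gamma \seq \Delta, \varphi \wedge \psi$ to a derivation of $\Gamma \seq \Delta, \varphi$; the remaining rules are handled by left/right and succedent/antecedent variants of the same argument, and the cases of $\dmneg$ and of the constants $\top, \bot$ are in fact simpler.

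The decisive case is when the last rule is the right introduction rule for conjunction introducing the displayed occurrence of $\varphi \wedge \psi$: its left premise is already a derivation of $\Gamma \seq \Delta, \varphi$ (and its right premise a derivation of $\Gamma \seq \Delta, \psi$), so there is nothing to do. In every other case the occurrence of $\varphi \wedge \psi$ to be eliminated is a side formula of the last rule $R$, hence it is present in each premise of $R$; I apply the induction hypothesis to those premises to eliminate it and then reapply $R$ to recover the desired end sequent. Since $R$ acts on formulas other than the displayed $\varphi \wedge \psi$ and treats the surrounding context uniformly, it remains applicable after the replacement. For the base cases, $\emptyset \seq \top$ and $\bot \seq \emptyset$ cannot carry $\varphi \wedge \psi$ in the succedent, while Identity forces the end sequent to be $\varphi \wedge \psi \seq \varphi \wedge \psi$, and $\varphi \wedge \psi \seq \varphi$ is then obtained from $\varphi \seq \varphi$ by Weakening followed by the left introduction rule for conjunction.

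The one genuine obstacle is Contraction (and, to a lesser extent, Weakening) applied to the very occurrence being eliminated. If the derivation ends by contracting two copies of $\varphi \wedge \psi$ into the displayed one, the naive strategy would require eliminating \emph{both} copies in the premise, i.e.\ appealing to the induction hypothesis on the \emph{output} of a previous appeal, whose height need not have decreased. I avoid this by strengthening the inductive statement to arbitrarily many occurrences: for every $k \geq 1$, any derivation of $\Gamma \seq \Delta, (\varphi \wedge \psi)^{k}$ of height at most $n$ yields a derivation of $\Gamma \seq \Delta, \varphi$. With this formulation the contraction step passes from a premise with $k+1$ displayed occurrences to the conclusion by a single appeal to the induction hypothesis, Weakening on a displayed occurrence is absorbed in the same way, and the principal case with $k \geq 2$ is closed by one appeal to the induction hypothesis on the $k-1$ remaining occurrences followed by a Contraction. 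Observe that the derivation produced in the Identity base case is not height-preserving, so the induction is set up on the height of the \emph{premise} only, with no bound imposed on the resulting derivation; this is harmless, since we claim plain admissibility rather than height-preserving invertibility. The corresponding statements for $\vee$, for $\dmneg$, and for the elimination rules for $\top$ and $\bot$, are established by exactly the same reasoning, so the induction goes through uniformly.
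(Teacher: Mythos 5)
Your proposal is correct, and it is worth noting that it supplies something the paper itself does not: the paper gives no proof of this proposition at all, simply citing it as the Inversion Lemma of Troelstra and Schwichtenberg \cite{basicprooftheory}, and the only inversion-style argument actually written out in the paper is the one for the dual statement, Proposition \ref{prop:intro-admissible}, which proceeds by induction on the height at which the premise sequent occurs as a leaf in a proof of the empty sequent, with multiset bookkeeping. Your argument is the direct inversion induction that the citation gestures at, but adapted --- necessarily --- to the features that distinguish the present calculus from the system for which the cited lemma is proved: there the axioms are atomic and Weakening and Contraction are absorbed into the rules, so inversion is depth-preserving and the complications you isolate never arise. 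Here Identity is available for arbitrary formulas, so inverting the axiom $\varphi \wedge \psi \seq \varphi \wedge \psi$ genuinely increases height (the shortest derivation of $\varphi \wedge \psi \seq \varphi$ has height $3$), and Contraction is a primitive rule, so the naive induction would have to re-invert its own, unboundedly tall, output. Your two fixes --- giving up any height bound on the produced derivation (harmless, since only plain admissibility is claimed) and strengthening the induction hypothesis to $k$ designated occurrences, with the contraction case handled by passing from $k$ to $k+1$ and the principal case for $k \geq 2$ closed by one appeal to the hypothesis followed by a Contraction --- are exactly what is needed, and your case analysis (principal occurrence, side occurrence, Weakening or Contraction acting on a designated occurrence, and the three axioms) is exhaustive. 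In short, where the paper relies on a citation that does not quite match its own calculus, your proposal gives a complete and correct proof, dual in spirit to the paper's own proof of Proposition \ref{prop:intro-admissible}.
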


  This proposition explains why the elimination rules are invisible from the standard point of view, which is concerned only with sequents provable from an empty set of premises.

\begin{proposition} \label{prop:intro-admissible}
  Each introduction rule is antiadmissible in the calculus which contains the common structural rules, the elimination rules for all connectives, and Cut.
\end{proposition}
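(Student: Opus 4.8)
The plan is to prove this as the exact dual of the Inversion Lemma (Proposition~\ref{prop:elim-admissible}): where that result shows that the elimination rules are admissible---they add no new \emph{theorems} to the calculus with the introduction rules and Identity---here I show that the introduction rules are antiadmissible, adding no new \emph{antitheorems} to the calculus with the elimination rules and Cut. Concretely, writing $\mathsf{C}$ for the calculus of the statement (the common structural rules, all elimination rules, and Cut), I must show that if the empty sequent $\emptyset \seq \emptyset$ is derivable from a set of sequents $S$ in $\mathsf{C}$ augmented by the introduction rules, then it is already derivable from $S$ in $\mathsf{C}$ itself. By Proposition~\ref{prop:sequent-composition-decomposition} every sequent $\tau$ is equivalent in $\GB$ to a finite set $\AtSeq{\tau}$ of atomic sequents, and crucially each member of $\AtSeq{\tau}$ is obtained from $\tau$ using the elimination and structural rules alone, hence is derivable from $\tau$ already in $\mathsf{C}$.

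The key step is the following claim, proved by induction on the derivation: if a sequent $\tau$ is derivable from $S$ in $\mathsf{C}$ augmented by the introduction rules, then every $\rho \in \AtSeq{\tau}$ is derivable from $S$ in $\mathsf{C}$. The base cases are immediate, since a leaf $\sigma \in S$ yields each $\rho \in \AtSeq{\sigma}$ by the elimination and structural rules, while the axioms $\emptyset \seq \top$ and $\bot \seq \emptyset$ have empty atomic decomposition. For structural and elimination steps the claim is routine, as $\AtSeq{\text{conclusion}}$ is contained in the union of the $\AtSeq{\cdot}$ of the premises, so the induction hypothesis suffices. The introduction case is the clean heart of the argument and the precise dual of the corresponding step in the Inversion Lemma: since each elimination rule inverts the matching introduction rule, decomposing the principal formula of an introduction step recovers exactly its premises, so that $\AtSeq{\text{conclusion}}$ equals the union of the atomic decompositions of the premises and the induction hypothesis applies verbatim. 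Applying the claim to $\tau = \emptyset \seq \emptyset$, whose atomic decomposition is $\{\emptyset \seq \emptyset\}$, then yields the proposition.

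The main obstacle is the Cut case. Here the two premises $\Gamma \seq \Delta, \varphi$ and $\varphi, \Gamma' \seq \Delta'$ have their atomic decompositions available in $\mathsf{C}$ by the induction hypothesis, but since $\mathsf{C}$ lacks the introduction rules we cannot rebuild the possibly compound cut formula $\varphi$; instead we must glue the two decompositions together by cutting on the atomic constituents into which $\varphi$ decomposes. Establishing that this is always possible amounts to a cut-rank reduction: by a secondary induction on the complexity of $\varphi$, a Cut on $\varphi$ is simulated using the elimination rules together with Cuts on the immediate subformulas of $\varphi$, bottoming out in Cuts between atomic sequents. This is exactly the reduction at the core of the classical cut-elimination theorem, of which the present argument is the dual, and the accompanying multiset bookkeeping is absorbed by Weakening and Contraction; I expect this to be the only genuinely laborious part, the remaining cases being mechanical.
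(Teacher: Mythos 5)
Your proposal is correct in substance, but it takes a genuinely different route from the paper's own proof. The paper proves the proposition as a literal dual of the Inversion Lemma: it defines what it means for a sequent to ``explode directly'' (to yield $\emptyset \seq \emptyset$ by a proof containing no introduction rules, with multiset bookkeeping of how many times each premise is used) and shows, by a height-preserving induction on the position of that premise inside the refutation, that whenever the conclusion of an introduction rule explodes directly, so do its premises. In that argument Cut needs no special treatment whatsoever --- it falls under ``any rule other than an elimination rule applied to the principal formula'' and is passed through by the induction; the only delicate point is the multiset bookkeeping. Your argument inverts this distribution of labour: by tracking the atomic decomposition $\AtSeq{\cdot}$ through the derivation, the introduction, elimination and structural cases become near-definitional, and all the real work is pushed into the Cut case, which you handle by reducing a cut on a compound formula to eliminations plus cuts on its subformulas. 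That reduction is sound, and it is exactly the expansion property for $\{\text{Cut}, \text{Contraction}\}$ that the paper proves later as Proposition~\ref{prop:structurally-atomic-specific}; in effect your proof derives the proposition from the normal-form machinery of Section~\ref{sec:analytic--synthetic}, since a structurally atomic analytic--synthetic proof of the atomic sequent $\emptyset \seq \emptyset$ can contain no introduction rules. Two small repairs are needed: for Weakening (and Contraction) the containment $\AtSeq{\text{conclusion}} \subseteq \bigcup \AtSeq{\text{premises}}$ is literally false, since Weakening introduces new atoms --- what holds, and suffices, is that each member of $\AtSeq{\text{conclusion}}$ follows from members of the premises' decompositions by atomic Weakening and Contraction; and $\AtSeq{\cdot}$ should be taken as the recursively defined decomposition of Proposition~\ref{prop:sequent-composition-decomposition} (making your identities definitional), not as the set of all elimination-derivable atomic sequents, for which those identities are themselves nontrivial inversion statements (Lemma~\ref{lemma:elimination-derivable}, which the paper proves by reusing the proof of this very proposition). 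As for what each approach buys: the paper's is local, height-preserving, needs no cut-rank reduction, and is reused verbatim for Lemma~\ref{lemma:elimination-derivable}; yours avoids the multiset subtlety entirely (derivability from a set permits unrestricted reuse of premises) and isolates the combinatorial content in a lemma of independent interest, at the price of proving cut-simulation, which the paper's argument never requires.
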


\begin{proof}
  Let us say that a sequent $\Gamma \seq \Delta$ \emph{explodes directly} relative to a multiset of sequents $\SequentSet$ if there is a proof of $\emptyset \seq \emptyset$ from $S \cup \{ \Gamma \seq \Delta \}$ which does not contain any introduction rules and in which each sequent is used precisely the specified number of times as a premise of the proof. Explicit reference to the set of side assumptions $S$ will be suppressed in the following. It now suffices to prove by induction over the height $h$ of the premise $\Gamma \seq \Delta$ in the proof of $\emptyset \seq \emptyset$ that
\begin{itemize}
\item if $\varphi \wedge \psi, \Gamma \seq \Delta$ explodes directly, so does $\varphi, \psi, \Gamma \seq \Delta$
\item if $\Gamma \seq \Delta, \varphi \wedge \psi$ explodes directly, so does $\{ \Gamma \seq \Delta, \varphi \} \cup \{ \Gamma \seq \Delta, \psi \}$
\item if $\varphi \vee \psi, \Gamma \seq \Delta$ explodes directly, so does $\{ \varphi, \Gamma \seq \Delta \} \cup \{ \varphi, \Gamma \seq \Delta \}$
\item if $\Gamma \seq \Delta, \varphi \vee \psi$ explodes directly, so does $\Gamma \seq \Delta, \varphi, \psi$
\item if $\dmneg \varphi, \Gamma \seq \Delta$ explodes directly, so does $\Gamma \seq \Delta, \varphi$
\item if $\Gamma \seq \Delta, \dmneg \varphi$ explodes directly, so does $\varphi, \Gamma \seq \Delta$
\item if $\top, \Gamma \seq \Delta$ explodes directly, so does $\Gamma \seq \Delta$
\item if $\emptyset \seq \top$ explodes directly, so does $\emptyset$
\item if $\bot \seq \emptyset$ explodes directly, so does $\emptyset$
\item if $\Gamma \seq \Delta, \bot$ explodes directly, so does $\Gamma \seq \Delta$
\end{itemize}
  We only deal with the first item, the rest of them are entirely analogical. The sequent $\varphi \wedge \psi, \Gamma \seq \Delta$ cannot be the last sequent of a proof of $\emptyset \seq \emptyset$, hence the base case holds trivially. Now suppose that an instance of $\varphi \wedge \psi, \Gamma \seq \Delta$ explodes directly via a proof where this sequent has height $h + 1$. If the rule which follows this instance of $\varphi \wedge \psi, \Gamma \seq \Delta$ is any rule other than an elimination rule applied to $\varphi \wedge \psi$, we may simply use the inductive hypothesis for~$h$. If, on the other hand, this instance of $\varphi \wedge \psi, \Gamma \seq \Delta$ occurs as a premise of the left conjunction elimination rule, then clearly $\varphi, \psi, \Gamma \seq \Delta$ explodes directly (with respect to the same multiset of sequents $\SequentSet$).
\end{proof}

  The above proof is height-preserving in the same sense as the original Inversion Lemma. Notice the slight twist involving multisets forced on us by the fact that a proof has only one conclusion but it may have many premises.

  The resolution calculus for classical logic is essentially the restriction of the calculus $\GKforK$ to atomic sequents. Proposition \ref{prop:intro-admissible} is there\-fore a more sophisticated version of the trivial observation that we never need to apply resolution to clauses of the form $p \vee \dmneg p$.

  Having proved the Inversion Lemma and its dual, we may now prove the admissibility of Cut and the antiadmissibility of Identity in the standard calculus for classical logic using a semantic argument. This is because for us these are not mere fragments of some calculus, but rather calculi in their own right with a perfectly good semantics provided by the logics $\LP$ and $\K$. (By contrast, the Inversion Lemma is not needed to prove the admissibility of Cut in the non-deterministic framework of Lahav and Avron \cite{lahav+avron13}.)

  The following proposition is already well-known (at least its first part) but we include a proof nonetheless for the sake of being self-contained and showing that the argument is indeed purely semantic.

\begin{proposition}[Weak conservativity] \label{prop:weak-conservativity}
  The logics $\CL$ and $\LP$ have the same theorems. The logics $\CL$ and $\K$ have the same antitheorems.
\end{proposition}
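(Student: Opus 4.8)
The plan is to argue entirely semantically, using nothing beyond the information order $\infleq$ and its monotonicity (Proposition~\ref{prop:information monotone}). I will use throughout that $\CLmatrix$ is the two-element Boolean submatrix of $\Bmatrix$, and hence a submatrix of both $\LPmatrix$ and $\Kmatrix$, so that the valuations into $\CLmatrix$ are exactly the classical valuations and $\CL = \Log \CLmatrix$. Since $\CL = \LP \vee \K$, both $\LP$ and $\K$ are contained in $\CL$; this immediately gives the two easy inclusions, namely that every $\LP$-theorem is a $\CL$-theorem and every $\K$-antitheorem is a $\CL$-antitheorem. What remains are the two converse inclusions, which turn out to be order-dual to one another.

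For the first claim I would prove the contrapositive: if $\varphi$ is not an $\LP$-theorem, then it is not a $\CL$-theorem. Fix a valuation $v \colon \FmAlgebra \to \LPmatrix$ with $v(\varphi)$ undesignated. The unique undesignated value of $\LPmatrix$ is its lattice-bottom, which is also minimal in the information order, while the remaining non-classical value of $\LPmatrix$ is the information-top, so that both classical values lie information-below it. I therefore define a classical valuation $v'$ by keeping $v(p)$ wherever it is already classical and replacing the non-classical value by an arbitrary classical value, so that $v' \infleq v$ on atoms. By Proposition~\ref{prop:information monotone} and induction on complexity, $v'(\varphi) \infleq v(\varphi)$; since $v(\varphi)$ is the information-minimal undesignated value, this forces $v'(\varphi)$ to equal it. Thus $v'$ is a classical valuation refuting $\varphi$, and $\varphi$ is not a $\CL$-theorem.

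The second claim is the same argument read through the opposite end of the information order. To show that every $\CL$-antitheorem is a $\K$-antitheorem, I again argue contrapositively: if $\Gamma$ is not a $\K$-antitheorem, fix $v \colon \FmAlgebra \to \Kmatrix$ with $v[\Gamma]$ designated, i.e.\ $v(\gamma)$ equal to the unique designated value of $\Kmatrix$ for each $\gamma \in \Gamma$. This value is the lattice-top and is maximal in the information order, whereas the remaining non-classical value of $\Kmatrix$ is the information-bottom, so both classical values lie information-above it. Defining a classical valuation $v'$ that keeps classical atom-values and rounds the non-classical value up to an arbitrary classical value gives $v \infleq v'$ on atoms, whence $v(\gamma) \infleq v'(\gamma)$ by monotonicity; information-maximality of the designated value forces $v'(\gamma)$ to be designated for every $\gamma \in \Gamma$. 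Hence $v'$ is a classical valuation designating all of $\Gamma$, and $\Gamma$ is not a $\CL$-antitheorem.

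The only step requiring real care --- and the one I expect to be the main obstacle, in the sense that it is where a careless reading of the matrices would break the proof --- is matching the relevant value with the correct extreme of the information order: in $\LPmatrix$ the undesignated value must be information-minimal and the spurious value information-maximal, while in $\Kmatrix$ the designated value must be information-maximal and the spurious value information-minimal. Once these incidences are verified from Figure~\ref{fig:matrices}, the rounding step is forced to land inside $\CLmatrix$ and Proposition~\ref{prop:information monotone} supplies the rest; the inductive verification that the information order is preserved under all connectives (including negation) is routine.
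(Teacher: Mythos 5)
Your proof is correct and takes essentially the same approach as the paper's: the easy inclusions follow from $\LP, \K \subseteq \CL$, and the converses are obtained by rounding non-classical atom-values to classical ones and invoking monotonicity in the information order (Proposition~\ref{prop:information monotone}) together with the information-minimality of the undesignated value of $\LPmatrix$ and, dually, the information-maximality of the designated value of $\Kmatrix$. The only cosmetic difference is that you construct one particular classical valuation $v'$, whereas the paper states the conclusion for all classical valuations $w \sqsubseteq v$ (resp.\ $w \sqsupseteq v$).
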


\begin{proof}
  Each theorem of $\LP$ is a theorem of $\CL$. Conversely, suppose that $v: \FmAlgebra \rightarrow \LPmatrix$ is a valuation such that $v(\varphi)$ is not designated. Then in particular $v(\varphi)$ is a minimal element of the information order on $\LPmatrix$. Since the operations of $\LPmatrix$ are monotone with respect to the information order by Proposition \ref{prop:information monotone}, it follows that $w(\varphi)$ is not designated for all classical valuations $w \sqsubseteq v$.

  The second claim has a dual proof. Each antitheorem of $\K$ is an anti\-theorem of $\CL$. Conversely, suppose that $v: \FmAlgebra \rightarrow \Kmatrix$ is a valuation such that $v(\varphi)$ is designated. Then in particular $v(\varphi)$ is a maximal element of the information order on $\Kmatrix$. The same argument as above shows that $w(\varphi)$ is designated for all classical valuations $w \sqsupseteq v$.
\end{proof}

  The admissibility of Cut and the antiadmissibility of Identity are now immediate consequences of the previous three propositions. We emphasize again that this route to proving the admissibility of Cut in the Gentzen calculus for classical logic was already taken by Pynko \cite{pynko10}. We therefore only provide a proof of the latter assertion.

\begin{theorem}[Admissibility of Cut] \label{thm:cut-admissible}
  In the Gentzen calculus which contains Identity, the common structural rules  of Weakening and Contraction, and the intro\-duction rules for all connectives, the Cut rule is admissible.
\end{theorem}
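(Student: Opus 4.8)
Let $C$ denote the calculus in the statement, i.e.\ Identity together with the common structural rules of Weakening and Contraction and the introduction rules for all connectives; this is exactly the calculus of Proposition~\ref{prop:elim-admissible}. Admissibility of Cut amounts to the claim that adding Cut to $C$ produces no new theorems, where the theorems of such a Gentzen calculus are the sequents provable from the empty set of sequential premises. The plan is to identify the theorems of $C$ with those of $\LP$, to identify the theorems of $C$ extended by Cut with those of $\CL$, and then to conclude by weak conservativity (Proposition~\ref{prop:weak-conservativity}) that these two classes coincide, so that Cut adds nothing.

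First I would compute the theorems of $C$. By the Inversion Lemma (Proposition~\ref{prop:elim-admissible}) the elimination rules are admissible in $C$, so $C$ and the calculus obtained from $C$ by adjoining all elimination rules have the same theorems. But that larger calculus is precisely $\GKforLP$ ($= \GKforB$ plus Identity), which axiomatizes $\GLP$. By the simple equivalence of Theorem~\ref{thm:hilbertizability}, instantiated at the empty set of premises, a sequent $\Gamma \seq \Delta$ is a theorem of $\GLP$ exactly when its translation $\tautrans(\Gamma \seq \Delta) = \dmneg \bigwedge \Gamma \vee \bigvee \Delta$ is a theorem of $\LP$. Hence the theorems of $C$ correspond under $\tautrans$ precisely to the theorems of $\LP$.

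Next I would compute the theorems of $C$ extended by Cut. Every rule of this extended calculus (Identity, Cut, the introduction rules, and the structural rules) is valid in $\GCL$, so each of its theorems translates under $\tautrans$ to a theorem of $\CL$. For the reverse inclusion I would invoke Proposition~\ref{prop:intro-and-elim-equivalent}: in the presence of both Identity and Cut the elimination rules are derivable from the introduction rules, so $C$ extended by Cut derives everything that $\GKforB$ plus Identity plus Cut derives, and the latter axiomatizes $\LP \vee \K = \CL$, i.e.\ the Gentzen relation $\GCL$. Thus the theorems of $C$ extended by Cut correspond under $\tautrans$ precisely to the theorems of $\CL$. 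Combining the two computations with Proposition~\ref{prop:weak-conservativity}, which states that $\CL$ and $\LP$ have the same theorems, yields that $C$ and $C$ extended by Cut have the same theorems, which is the admissibility of Cut.

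The bookkeeping with $\tautrans$ and the final appeal to weak conservativity are routine once the correspondences are in place; the one step demanding genuine care is the completeness half of the second computation, namely verifying that adjoining Cut really recovers the full strength of $\GCL$ on theorems rather than some proper subrelation. This is where Proposition~\ref{prop:intro-and-elim-equivalent} does the essential work, letting us reconstruct the elimination rules and hence a complete Belnap-style calculus for $\CL$ once both Identity and Cut are available.
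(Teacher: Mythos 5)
Your proof is correct and follows essentially the route the paper takes: the paper treats Theorem \ref{thm:cut-admissible} as an immediate consequence of Propositions \ref{prop:elim-admissible} and \ref{prop:weak-conservativity}, writing out explicitly only the dual proof of Theorem \ref{thm:identity-antiadmissible}, of which your argument is the exact dualization. One correction, however, to your closing assessment: the step you single out as ``the one step demanding genuine care'' --- the completeness half of your second computation, resting on Proposition \ref{prop:intro-and-elim-equivalent} --- is in fact superfluous. Admissibility asserts only that the theorems of $C$ plus Cut are among those of $C$, so a one-directional chain of inclusions suffices: every theorem of $C$ plus Cut is a theorem of $\GCL$ (trivially, since each rule of $C$ plus Cut is a rule of $\GKforCL$); every theorem of $\GCL$ is a theorem of $\GLP$ by Proposition \ref{prop:weak-conservativity} transferred through the equivalence of Theorem \ref{thm:hilbertizability}; and every theorem of $\GLP$, i.e.\ of the calculus $C$ plus the elimination rules, is a theorem of $C$ by Proposition \ref{prop:elim-admissible}. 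Whether $C$ plus Cut recovers the full strength of $\GCL$, rather than some proper subrelation with the same theorems, never enters the argument, so Proposition \ref{prop:intro-and-elim-equivalent} can be dropped from your proof entirely.
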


\begin{theorem}[Antiadmissibility of Identity] \label{thm:identity-antiadmissible}
  In\:the\:Gentzen\:calculus\:which contains Cut, the common structural rules, and the elimination rules for all connectives, the Identity rule is antiadmissible.
\end{theorem}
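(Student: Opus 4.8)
The plan is to squeeze the antitheorems of the given calculus between those of two Gentzen relations we already understand and then invoke the dual half of weak conservativity. Write $C$ for the calculus of the theorem, namely the common structural rules together with the elimination rules for all connectives and Cut. Since Weakening is available, a set of sequents $\SequentSet$ counts as an antitheorem of a calculus exactly when $\SequentSet \vdash \emptyset \seq \emptyset$ is derivable in it, and we must show that $C$ and $C$ extended by Identity have the same antitheorems. One inclusion is free: adjoining a rule can only create more antitheorems, so every antitheorem of $C$ is already one of $C$ with Identity.

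First I would pin down the two endpoints of the squeeze. Adjoining all the introduction rules (including the axioms $\emptyset \seq \top$ and $\bot \seq \emptyset$) to $C$ yields precisely $\GKforB$ together with Cut, which axiomatizes $\GK$; by the antiadmissibility of the introduction rules (Proposition \ref{prop:intro-admissible}) this extension produces no new antitheorems, so $C$ and $\GK$ have the same antitheorems. At the other end, $C$ extended by Identity is a subcalculus of $\GKforB$ together with both Cut and Identity. Because $\GKforB$ plus Cut axiomatizes $\GK$ and $\GKforB$ plus Identity axiomatizes $\GLP$, the calculus $\GKforB$ plus Cut and Identity axiomatizes the join $\GK \vee \GLP$, which under the lattice isomorphism of Theorem \ref{thm:hilbertizability} corresponds to $\K \vee \LP = \CL$, that is, to $\GCL$. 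Hence every antitheorem of $C$ with Identity is an antitheorem of $\GCL$.

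It then remains to connect $\GK$ and $\GCL$. By the simple equivalence of each Gentzen relation with its Hilbert counterpart, a set of sequents $\SequentSet$ satisfies $\SequentSet \vdash \emptyset \seq \emptyset$ in a relation $\GentzenRelation\logic{L}$ if and only if $\tautrans[\SequentSet] \vdash_{\logic{L}} \emptyset$; note that $\tautrans(\emptyset \seq \emptyset)$ is interderivable with $\bot$ in $\B$, since $\dmneg \top \vee \bot$ collapses to $\bot$. Thus the antitheorems of $\GK$ and of $\GCL$ are determined by those of $\K$ and of $\CL$, and these coincide by the second clause of weak conservativity (Proposition \ref{prop:weak-conservativity}). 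Chaining the four facts, the antitheorems of $C$ lie inside those of $C$ with Identity, which lie inside those of $\GCL$, which equal those of $\GK$, which in turn equal those of $C$; all four collections are therefore forced to be equal, which is exactly the antiadmissibility of Identity.

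I expect the only real friction to be bookkeeping rather than anything conceptual: one must keep the dualization straight, working with elimination rules and Cut rather than introduction rules and Identity, and be careful to invoke the ``same antitheorems'' clause of Proposition \ref{prop:weak-conservativity} (the $\CL$/$\K$ comparison) rather than the ``same theorems'' clause. The step that genuinely carries the argument is the identification of the antitheorems of $C$ with those of $\GK$ through Proposition \ref{prop:intro-admissible}; the remaining moves are the lattice isomorphism and the passage from Gentzen antitheorems to Hilbert antitheorems, both of which are already available.
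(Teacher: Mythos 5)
Your proposal is correct and takes essentially the same route as the paper: embed the calculus-plus-Identity into $\GKforCL$, pass from $\CL$ to $\K$ antitheorems via the second clause of Proposition \ref{prop:weak-conservativity} together with the Hilbert--Gentzen equivalence, and then strip the introduction rules using Proposition \ref{prop:intro-admissible}. Your version merely spells out details the paper leaves implicit (the lattice-isomorphism justification that $\GKforB$ plus Identity and Cut axiomatizes $\GCL$, and the computation $\tautrans(\emptyset \seq \emptyset) \dashv\vdash_{\B} \bot$).
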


\begin{proof}
  Suppose that the empty sequent is derivable from the set of sequents $\SequentSet$ in the Gentzen calculus which containts Cut, the common structural rules, the elimination rules, and the Identity rule. Then it is derivable from $\SequentSet$ in $\GKforCL$. Using the equivalence between Hilbert and Gentzen versions of super-Belnap logics, Proposition \ref{prop:weak-conservativity} implies that the empty sequent is derivable from $\SequentSet$ in $\GKforK$. Proposition \ref{prop:intro-admissible} then implies that the empty sequent is derivable from $\SequentSet$ in the calculus which contains Cut, the common structural rules, and the elimination rules.
\end{proof}

  These proofs, based on Proposition \ref{prop:weak-conservativity}, of course do not yield a procedure for eliminating Cut or Identity from a given proof.

  In the rest of this section, we translate some of the results of \cite{prenosil16} into the language of super-Belnap calculi. Claims about super-Belnap logics made without proof in the rest of this section are established in \cite{prenosil16}.

  Theorem \ref{thm:cut-admissible} generalizes to arbitrary super-Belnap calculi, even though Propositions \ref{prop:elim-admissible} and \ref{prop:intro-admissible} are of course specific to $\LP$ (and $\LP \vee \ECQ$). Cut is the strongest non-axiomatic rule in super-Belnap logics, therefore the folowing theorem completely settles the question of admissibility of non-axiomatic rules in Gentzen calculi for super-Belnap logics.

\begin{theorem} \label{thm:cut-always-admissible}
  Cut is admissible in each super-Belnap calculus.
\end{theorem}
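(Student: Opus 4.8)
The plan is to reduce the assertion to a purely Hilbert-style statement about theorems and then invoke the appropriate generalization of Proposition~\ref{prop:weak-conservativity}. First I would fix a super-Belnap calculus, i.e.\ an extension $\GKforB + R$ of $\GKforB$ by a set $R$ of structural rules, and let $\GentzenRelation\logic{L}$ be the Gentzen relation it axiomatizes, where $\logic{L}$ is the super-Belnap logic matched to it by the isomorphism of Theorem~\ref{thm:hilbertizability}. Adjoining Cut yields the calculus $\GKforB + R + \mathrm{Cut}$. Since Cut axiomatizes $\GentzenRelation\K$ over $\GB$ and adjoining rules computes joins in the lattice of Gentzen relations, this calculus axiomatizes the join $\GentzenRelation\logic{L} \vee \GentzenRelation\K$, which under the lattice isomorphism induced by $\tautrans$ and $\rhotrans$ corresponds to $\logic{L} \vee \K$.

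Second, I would transport the notion of a theorem across this isomorphism. By simple equivalence, a sequent $\Gamma \seq \Delta$ is a theorem of a Gentzen relation $\GentzenRelation\logic{M}$, i.e.\ $\emptyset \vdash_{\GentzenRelation\logic{M}} \Gamma \seq \Delta$, if and only if $\dmneg \bigwedge \Gamma \vee \bigvee \Delta$ is a theorem of $\logic{M}$. Hence the theorems of $\GentzenRelation\logic{L}$ and of $\GentzenRelation(\logic{L} \vee \K)$ coincide exactly when $\logic{L}$ and $\logic{L} \vee \K$ have the same theorems. The admissibility of Cut in the chosen calculus is therefore equivalent to the Hilbert-style claim that joining an arbitrary super-Belnap logic with $\K$ creates no new theorems.

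Third, I would prove this claim, which is the generalization of the first half of Proposition~\ref{prop:weak-conservativity} from $\LP$ to an arbitrary $\logic{L}$. One inclusion is immediate from $\logic{L} \subseteq \logic{L} \vee \K$. For the converse I would argue semantically, in the spirit of Proposition~\ref{prop:weak-conservativity}: representing $\logic{L}$ by its reduced matrix models over De Morgan algebras and using the information order together with the monotonicity of the connectives (Proposition~\ref{prop:information monotone}), one pushes any refutation of a formula in an $\logic{L}$-model down the information order to a refutation in a matrix that is simultaneously a model of $\logic{L}$ and of $\K$; since the models of $\logic{L} \vee \K$ are precisely the common models of $\logic{L}$ and $\K$, a non-theorem of $\logic{L}$ then remains a non-theorem of $\logic{L} \vee \K$. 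This is the content I would cite from \cite{prenosil16}.

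The hard part is exactly this last step. For $\LP$ it was effortless because the single non-designated value of $\LPmatrix$ is already information-minimal and classical valuations, which are automatically $\K$-valuations, lie below every valuation. For a general $\logic{L}$ one must simultaneously control the shape of the designated filter and ensure that the descent down the information order leaves the model inside $\logic{L}$ while making it a model of $\K$; this is where the structure theory of super-Belnap logics from \cite{prenosil16} carries the weight. Once it is available, the reduction above makes the admissibility of Cut a one-line consequence, and the remark that $\K$ is the strongest logic axiomatized by a rule with non-empty premises upgrades the conclusion to the admissibility of every non-axiomatic rule.
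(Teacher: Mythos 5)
Your first two steps are exactly the paper's (implicit) reduction: via the isomorphism of Theorem~\ref{thm:hilbertizability}, adding Cut to a calculus for $\logic{L}$ axiomatizes the Gentzen counterpart of $\logic{L} \vee \K$, so the admissibility of Cut is equivalent to the Hilbert-level claim that $\logic{L}$ and $\logic{L} \vee \K$ have the same theorems. The genuine gap is your third step, which is where all the content lies. The information-order descent you sketch is tied to the concrete matrices $\Bmatrix$, $\Kmatrix$, $\LPmatrix$: an arbitrary (reduced) model of an arbitrary super-Belnap logic is just a De Morgan matrix, which carries no information order with the required properties, and even where such an order is available, nothing guarantees that the valuation you descend to lives in a matrix that is still a model of $\logic{L}$ while also being a model of $\K$. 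You concede that this is ``the hard part'' and outsource it to \cite{prenosil16}, but what you are outsourcing is, in effect, the whole theorem; the semantic mechanism you describe is not a proof of it, and it is not the form in which \cite{prenosil16} supplies the needed input.

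What the paper actually imports from \cite{prenosil16} is a sharper, purely lattice-theoretic fact that your proposal never identifies: the dichotomy that every super-Belnap logic $\logic{L}$ satisfies either $\logic{L} \subseteq \K$ or $\logic{L} \supseteq \LP$, together with the fact that $\B$ and $\K$ have the same theorems. Given the dichotomy, no general semantic descent is needed: if $\logic{L} \subseteq \K$, then $\logic{L} \vee \K = \K$ and the theorems of $\logic{L}$ are squeezed between those of $\B$ and $\K$, which coincide; if $\logic{L} \supseteq \LP$, then $\logic{L} \vee \K = \CL$ (the trivial logic being trivial to handle) and the theorems of $\logic{L}$ are squeezed between those of $\LP$ and $\CL$, which coincide by Proposition~\ref{prop:weak-conservativity}. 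The push-down along the information order is used only inside the proof of Proposition~\ref{prop:weak-conservativity}, where the concrete matrices $\LPmatrix$ and $\Kmatrix$ make it legitimate. To repair your argument, replace your third step by this dichotomy; as written, your proposal restates the theorem rather than proving it.
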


\begin{proof}
  The logics $\B$ and $\K$ have the same theorems, as do $\LP$ and $\CL$. Moreover, each super-Belnap logic $\logic{L}$ is either below $\K$ or above $\LP$. In the former case, $\logic{L} \vee \K = \K$, while in the latter case $\logic{L} \vee \K = \CL$. In both cases $\logic{L}$ and $\logic{L} \vee \K$ have the same theorems, i.e.\ the same sequents are provable in each super-Belnap calculus for $\logic{L}$ and in its extension by Cut.
\end{proof}

  When it comes to antiadmissibility, there is no such single theorem to cover all situations. Let us therefore only consider one simple example.

\begin{proposition}
  Limited Cut is antiadmissible in the calculus $\GKforECQ$ which extends $\GKforB$ by Explosive Cut.
\end{proposition}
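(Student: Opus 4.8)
The plan is to reduce the statement to a purely semantic comparison of antitheorems. First I would note that adjoining Limited Cut to $\GKforECQ$ in fact produces nothing more than $\GKforETL$. Indeed, Explosive Cut is simply the instance of the first form of Limited Cut in which $\Gamma \seq \Delta$ is taken to be the empty sequent, so in the presence of Limited Cut the rule Explosive Cut is redundant; hence $\GKforB$ extended by Explosive Cut and Limited Cut coincides with $\GKforB$ extended by Limited Cut, which is $\GKforETL$. Equivalently, this records that $\ECQ \vee \ETL = \ETL$, since $\ETL$ already validates the defining rule $p, \dmneg p \vdash \emptyset$ of $\ECQ$. Consequently the Gentzen relation obtained from $\GECQ$ by adding Limited Cut is exactly $\GETL$, and antiadmissibility of Limited Cut is the assertion that $\GECQ$ and $\GETL$ have the same antitheorems.

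Next I would pass to the Hilbert side through the isomorphism of Theorem \ref{thm:hilbertizability}. The transformer $\tautrans$ sends the empty sequent to $\{\dmneg \top \vee \bot\}$, which is interderivable with $\bot$, so it identifies the antitheorems of a Gentzen relation extending $\GB$ with the antitheorems of the corresponding Hilbert relation extending $\B$. It therefore suffices to prove that the logics $\ECQ$ and $\ETL$ have the same antitheorems; one inclusion is immediate from $\ECQ \subseteq \ETL$, so the real content is that $\Gamma \vdash_{\ETL} \emptyset$ implies $\Gamma \vdash_{\ECQ} \emptyset$.

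For this I would unwind the matrix semantics, using $\ETL = \Log \ETLmatrix$ and $\ECQ = \Log(\ETLmatrix \times \Bmatrix)$. Since every logic in sight extends $\B$ and $\bot$ is never designated, a set $\Gamma$ is an antitheorem of $\Log M$ precisely when no valuation into $M$ designates every member of $\Gamma$. A valuation into $\ETLmatrix \times \Bmatrix$ is a pair of valuations into the two factors, and an element of the product is designated iff both of its coordinates are; hence $\Gamma$ is an antitheorem of $\ECQ$ iff $\Gamma$ is unsatisfiable in $\ETLmatrix$ or unsatisfiable in $\Bmatrix$. The decisive observation is that $\ETLmatrix$ and $\Bmatrix$ share the underlying algebra $\DMfour$ and that the designated set of $\ETLmatrix$ (its top element alone) is contained in that of $\Bmatrix$. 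Thus any valuation designating all of $\Gamma$ in $\ETLmatrix$ also designates it in $\Bmatrix$, so unsatisfiability in $\Bmatrix$ already entails unsatisfiability in $\ETLmatrix$. The disjunction therefore reduces to unsatisfiability in $\ETLmatrix$, which is exactly the condition for $\Gamma$ to be an antitheorem of $\ETL$. This yields the equality of antitheorems and hence the proposition.

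I expect the one point needing care to be the first paragraph: verifying that adding Limited Cut does not overshoot $\GKforETL$, that is, that Explosive Cut is genuinely subsumed. After that, the argument follows the same template as Proposition \ref{prop:weak-conservativity}, driven entirely by the inclusion of the designated set of $\ETLmatrix$ in that of $\Bmatrix$ over the common algebra $\DMfour$, with the remainder a routine reading-off of the product semantics.
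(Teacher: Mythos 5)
Your proposal is correct, and its skeleton is exactly the paper's: the paper's entire proof is the single sentence that $\ECQ$ and $\ETL$ have the same antitheorems, which is precisely the statement you reduce to after observing that Explosive Cut is an instance of Limited Cut (so $\GKforECQ$ plus Limited Cut is $\GKforETL$) and transferring antitheorems across the Gentzen--Hilbert equivalence — two steps the paper leaves implicit. Where you genuinely add something is in the last step: the paper imports the equality of antitheorems as a known result (it belongs to the claims credited to the manuscript on the lattice of super-Belnap logics), whereas you prove it from scratch using $\ECQ = \Log(\ETLmatrix \times \Bmatrix)$ and $\ETL = \Log\ETLmatrix$: satisfiability in the product matrix splits into satisfiability in each factor, and since both factors share the algebra $\DMfour$ and the designated set of $\ETLmatrix$ is contained in that of $\Bmatrix$, unsatisfiability in $\Bmatrix$ is subsumed by unsatisfiability in $\ETLmatrix$. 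This mirrors the information-order argument of the weak conservativity proposition and makes the result self-contained, at the modest cost of relying on the product-matrix characterization of $\ECQ$ (itself cited from the same external source), so the two proofs differ only in which black box they open.
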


\begin{proof}
  $\ECQ$ and $\ETL$ have the same antitheorems.
\end{proof}

  It is \emph{not} the case that Identity is antiadmissible in all super-Belnap cal\-culi, in contrast to Theorem \ref{thm:cut-always-admissible}. In particular, it is not antiadmissible in either of the calculi $\GKforECQ$ and $\GKforETL$, since the logics $\LP \vee \ECQ$ and $\CL = \LP \vee \ETL$ have more antitheorems than $\ECQ$ and $\ETL$, one example being the formula $(p \wedge \dmneg p) \vee (q \wedge \dmneg q)$.

\section{Analytic--synthetic proofs and structural \mbox{atomicity}}
\label{sec:analytic--synthetic}

  The Gentzen-style proof theory of classical logic has mainly been concerned with which sequents are provable, meaning provable from an empty set of premises. Accordingly, its ``Hauptsatz'' states that with an empty set of premises we may restrict to proofs which do not contain Cut.

  In the current context of super-Belnap calculi, we are mainly interested in proving sequents from other sequents. We would therefore like to formulate an appropriate generalization of this result which would cover proofs from non-empty sets of premises. Since in classical logic we may be interested in proving sequents from other sequents as well, such a generalization may be of interest even to the classical logician. In particular, we shall use it in the following section to provide an alternative syntactic proof of (a certain refinement of) the Craig interpolation theorem for classical logic.

  We propose to generalize the notion of a cut-free proof to proofs from a non-empty set of premises by decomposing this notion into a conjunction of two distinct conditions: structural atomicity and analyticity--syntheticity. The former notion concerns only the applications of structural rules in the proof, whereas the latter notion concerns only logical rules.

\begin{definition}
  A proof is \emph{structurally atomic} if both the premises and conclusions of all occurrences of structural rules in the proof are atomic sequents. A proof is \emph{analytic--synthetic} if in each branch of the proof all instances of elimination rules precede all instances of introduction rules.
\end{definition}

  Structurally atomic analytic--synthetic proofs may therefore be divided into three parts: a part consisting of elimination rules at the top, a part consisting of atomic instances of structural rules in the middle, and a part consisting of introduction rules at the bottom (each of these parts may be empty). The importance of structural atomicity is precisely that it yields this tripartite structure in conjunction with analyticity--syntheticity. Note that, as the following lemma shows, ``local'' analyticity--syntheticity implies ``global'' analyticity--syntheticity in the presence of structural atomicity.

\begin{lemma} \label{lemma:immediately-follows}
  A structurally atomic proof is analytic--synthetic if and only if no elimination rule in it immediately follows an introduction rule.
\end{lemma}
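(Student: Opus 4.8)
The plan is to establish the two implications separately, with essentially all of the work concentrated in the right-to-left direction. The left-to-right implication is immediate: if the proof is analytic--synthetic then, by definition, in every branch each elimination rule precedes each introduction rule, so in particular no elimination rule stands immediately below an introduction rule. This half uses neither structural atomicity nor any feature of the particular calculus.

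For the converse I would first isolate the one place where structural atomicity enters, in the form of the following observation: \emph{in a structurally atomic proof no introduction rule is immediately followed by a structural rule}. The reason is that the conclusion of any introduction rule contains the connective or constant just introduced and is therefore a non-atomic sequent; here it is essential that $\top$ and $\bot$ count as non-atomic, so that even the two constant axioms $\emptyset \seq \top$ and $\bot \seq \emptyset$ have non-atomic conclusions (this is the same reading of ``atomic'' under which Proposition~\ref{prop:sequent-composition-decomposition} decomposes a sequent by stripping away all connectives and constants). If a structural rule were applied immediately below such an introduction, its premise would be exactly this non-atomic conclusion, contradicting the defining requirement that the premises of structural rules in a structurally atomic proof be atomic.

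Granting this observation, I would fix an arbitrary branch and read it downward from its leaf toward the conclusion. Consider any introduction rule occurring in the branch and the rule, if any, immediately below it. By the observation it is not a structural rule, and by the standing hypothesis it is not an elimination rule; hence it is again an introduction rule. A straightforward induction along the branch then shows that once an introduction rule has been applied every subsequent rule is an introduction rule, so that no elimination rule ever occurs below an introduction rule. As the branch was arbitrary, every elimination rule precedes every introduction rule in every branch, which is precisely analyticity--syntheticity.

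The argument is short, and the only delicate points are bookkeeping rather than substance. I would take care to verify the non-atomicity claim uniformly across all introduction rules, including the nullary ones given by the constant axioms, since a single atomic introduction conclusion would break the observation on which everything hinges; and I would note that the leaves drawn from the premise multiset $\SequentSet$ are not rule occurrences and hence are irrelevant to the classification of rules into eliminations, introductions, and structural rules. The conceptual heart of the proof is simply the realization that structural atomicity forbids the configuration ``introduction immediately above a structural rule,'' which is exactly what upgrades the purely local hypothesis to the global conclusion.
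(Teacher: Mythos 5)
Your proof is correct and takes essentially the same approach as the paper: both arguments hinge on the observation that in a structurally atomic proof an introduction rule cannot immediately precede a structural rule (the paper also states the dual fact that an elimination cannot immediately follow one), and then walk down a branch to conclude that an introduction standing above an elimination would force an adjacent introduction--elimination pair. Your treatment of the constant axioms $\emptyset \seq \top$ and $\bot \seq \emptyset$ as non-atomic introductions is the right reading and matches the paper's conventions.
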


\begin{proof}
  Elimination rules may not immediately follow and introduction rules may not immediately precede any occurrence of a structural rule in a structurally atomic proof. Therefore if in some branch of the proof an instance of an introduction rule precedes an instance of an elimination rule, then there must be a pair of rules in between these two which consists of an introduction rule followed by an elimination rule.
\end{proof}

  It is worth observing that if we restrict to classical logic (i.e.\ if the specific structural rules are Identity and Cut) and to an empty set of premises, structurally atomic analytic--synthetic proofs are essentially ordinary cut-free proofs: clearly no elimination rules may occur in such proofs, and all instances of Cut are restricted to atomic sequents. The following proposition, whose proof is immediate, is now all it takes to transfom structurally atomic analytic--synthetic proofs from an empty set of premises in $\GKforCL$ into cut-free proofs in the standard sense of the term. Note that by an \emph{atomic version} of a rule, we mean the restriction of the rule to inferences in which all of the premises as well as the conclusion are atomic sequents.

\begin{proposition} \label{prop:initial-cuts-redundant}
  The following are equivalent for atomic sequents $\Gamma \seq \Delta$:
\begin{itemize}
\item[(i)] $\Gamma \seq \Delta$ has the form $p, \Gamma' \seq \Delta', p$.
\item[(ii)] $\Gamma \seq \Delta$ is derivable using atomic versions of Identity and Weakening.
\item[(iii)] $\Gamma \seq \Delta$ is derivable using atomic versions of Identity, Cut, Weakening, and Contraction.
\end{itemize}
\end{proposition}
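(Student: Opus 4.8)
The plan is to prove the cycle (i) $\Rightarrow$ (ii) $\Rightarrow$ (iii) $\Rightarrow$ (i), of which the first two implications are essentially immediate and only the last carries any content.

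For (i) $\Rightarrow$ (ii), suppose $\Gamma \seq \Delta$ has the form $p, \Gamma' \seq \Delta', p$. I would begin from the instance $p \seq p$ of atomic Identity and apply the atomic versions of left and right Weakening, adjoining one atom of $\Gamma'$ on the left or one atom of $\Delta'$ on the right at each step. Since $\Gamma'$ and $\Delta'$ are finite multisets of atoms, this terminates in a derivation of $p, \Gamma' \seq \Delta', p$ in which every sequent that appears is atomic, so only the atomic versions of Identity and Weakening are used. The implication (ii) $\Rightarrow$ (iii) is trivial, since every rule permitted in (ii) is also permitted in (iii).

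The substantive direction is (iii) $\Rightarrow$ (i), which I would establish semantically. Call an atomic sequent $\Lambda \seq \Pi$ \emph{valid} if every classical valuation $v \colon \FmAlgebra \to \CLmatrix$ that designates all atoms in $\Lambda$ also designates some atom in $\Pi$. The key observation is that validity is preserved along any derivation of the kind described in (iii): each leaf $p \seq p$ of such a derivation is an instance of atomic Identity and is plainly valid, and one checks by hand that the atomic versions of Cut, Weakening, and Contraction take valid premises to valid conclusions. A routine induction on the height of the derivation then shows that the derived atomic sequent $\Gamma \seq \Delta$ is valid. Finally, a valid atomic sequent must have the form (i): were $\Gamma$ and $\Delta$ to share no atom, the classical valuation designating exactly those atoms occurring in $\Gamma$ would designate all of $\Gamma$ while designating nothing in $\Delta$, contradicting validity; hence some atom $p$ occurs in both $\Gamma$ and $\Delta$, which is precisely form (i).

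The one place I would be careful is the soundness of atomic Cut, where the cut atom is deleted from the conclusion. Here I would split on the value a given classical valuation assigns to the cut atom $p$: if $v$ designates $p$, I use validity of the right premise $p, \Gamma' \seq \Delta'$, and otherwise I use validity of the left premise $\Gamma \seq \Delta, p$, in each case obtaining a designated atom on the right of the conclusion. Everything else --- the remaining structural rules and the construction of the countermodel --- is entirely routine.
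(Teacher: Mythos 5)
Your proof is correct, and the place you flagged as needing care (soundness of atomic Cut, splitting on the value of the cut atom) is indeed the only step with content. Note, however, that the paper offers no proof at all --- it calls the proposition immediate --- and the intended immediate argument is purely syntactic rather than semantic: one shows by induction on the derivation that the property of sharing an atom between the two sides, i.e.\ having the form (i), is itself preserved by the atomic versions of Identity, Cut, Weakening, and Contraction. The only nontrivial case is Cut: if the left premise $\Gamma \seq \Delta, p$ shares an atom $q$, then either $q \in \Delta$, so the conclusion $\Gamma, \Gamma' \seq \Delta, \Delta'$ shares $q$, or else $q = p \in \Gamma$; in the latter case the right premise $p, \Gamma' \seq \Delta'$ shares some atom $r$, and either $r \in \Gamma'$, so the conclusion shares $r$, or $r = p \in \Delta'$, so the conclusion shares $p$ outright. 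Your semantic route is the same induction with the invariant ``classically valid'' in place of ``shares an atom''; since for atomic sequents these two properties coincide (this is exactly your final countermodel argument), the two proofs have the same mathematical content. The syntactic version buys self-containedness and brevity --- no appeal to valuations --- which is what justifies the word ``immediate''; your version buys a rule-independent invariant, so it would survive the addition of any further classically sound atomic structural rule without redoing the case analysis.
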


  We may dualize this observation and simplify the structure of structurally atomic analytic--synthetic proofs of the empty sequent in an analogical way.

\begin{proposition}
  The following are equivalent for sets of atomic sequents~$\SequentSet$:
\begin{itemize}
\item[(i)] $\emptyset \seq \emptyset$ is derivable from $\SequentSet$ using atomic instances of Contraction followed by atomic instances of Cut.
\item[(ii)] $\emptyset \seq \emptyset$ is derivable from $\SequentSet$ using atomic versions of Contraction, Weakening, Identity, and Cut.
\end{itemize}
\end{proposition}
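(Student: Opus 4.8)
The plan is to treat the two directions asymmetrically. The implication from (i) to (ii) is immediate, since every rule used in (i) is among the rules allowed in (ii); so the whole content lies in (ii) $\Rightarrow$ (i), which I would prove as a dual of Proposition \ref{prop:initial-cuts-redundant} and largely by the same semantic shortcut.

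First I would reduce the rule set semantically. The calculus in (ii) is (the atomic structural fragment of) a Gentzen calculus for $\CL$, so by the simple equivalence between Gentzen and Hilbert relations, $\emptyset \seq \emptyset$ is derivable from an atomic set $\SequentSet$ using those rules if and only if $\SequentSet$ is an antitheorem of $\CL$. By weak conservativity (Proposition \ref{prop:weak-conservativity}) the logics $\CL$ and $\K$ have the same antitheorems, hence $\emptyset \seq \emptyset$ is already derivable from $\SequentSet$ in $\GKforK$, i.e.\ in the extension of $\GKforB$ by Cut alone. Since both $\SequentSet$ and the target sequent are atomic, no logical introduction or elimination rule can contribute anything that survives to the conclusion, so such a derivation may be taken to lie entirely in the atomic structural fragment, using only Weakening, Contraction, and Cut. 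In particular Identity has disappeared, exactly dually to the disappearance of Cut and Contraction in the primal Proposition \ref{prop:initial-cuts-redundant}.

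Next I would eliminate Weakening. The point is that the end-sequent $\emptyset \seq \emptyset$ carries no formulas, so any formula introduced by an atomic Weakening must be removed further down the proof, either by being contracted against a duplicate or by being consumed as the cut-formula of a Cut. In either case the offending Weakening and the step that removes its formula cancel, and a routine induction on the height of the Weakening inference produces a Weakening-free derivation of $\emptyset \seq \emptyset$ from $\SequentSet$ that uses only Contraction and Cut. Combined with the previous paragraph this already delivers the reduced rule set demanded by (i).

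The part needing the most care is the organization of the surviving Contractions relative to the Cuts. A Contraction can be permuted upward past a Cut whenever its two merged occurrences originate in a single premise of that Cut; the one case that resists such a permutation is a Contraction whose two occurrences are contributed by the two \emph{different} premises of a Cut. I would therefore establish the normal form in which the two premises of each Cut are contracted just before that Cut is applied, and check that this reorganization preserves the end-sequent; the semantic argument above guarantees that a suitable Contraction-and-Cut refutation exists, so what remains is the bookkeeping of pushing each Contraction to the Cut that consumes its result. This Contraction--Cut interaction is the main obstacle, and it is precisely the asymmetry that makes deriving the empty sequent genuinely more delicate than its dual of deriving a single atomic sequent of the form $p, \Gamma' \seq \Delta', p$ in Proposition \ref{prop:initial-cuts-redundant}.
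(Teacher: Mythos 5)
Your direction (i)$\Rightarrow$(ii) is of course immediate, but both halves of your treatment of (ii)$\Rightarrow$(i) have serious problems. First, the semantic descent: from ``$\emptyset \seq \emptyset$ is derivable from $\SequentSet$ in $\GKforK$'' you conclude in one sentence that the derivation ``may be taken to lie entirely in the atomic structural fragment'' because no logical rule contributes anything that survives to the conclusion. That is not an argument: Cut in $\GKforK$ is unrestricted, so a derivation may introduce a compound formula on the right in one branch and on the left in another and then cut on it; the compound formula does not survive, yet removing such detours is exactly the content of the paper's later machinery (Propositions \ref{prop:analytic--synthetic}, \ref{prop:expansion-property}, and \ref{prop:structurally-atomic-specific}, i.e.\ the expansion property of $\{\text{Cut}, \text{Contraction}\}$ together with the analytic--synthetic transformation), which is not available at this point and is in any case the hardest step of the whole section. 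The paper's own proof, by contrast, never leaves the structural fragment: it is purely syntactic, pushing each Weakening downward until it is absorbed by a Cut or a Contraction on the weakened atom (Weakening cannot end a proof of the empty sequent), then deleting Identity inferences, whose only possible successor is a redundant Cut.

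The fatal problem, however, is the final reorganization, and you in fact put your finger on it yourself. A Contraction whose two occurrences come from the two different premises of a Cut cannot be permuted above that Cut; your remedy --- contract the two premises of each Cut just before applying it --- yields an interleaved normal form, which is not what (i) demands (all Contractions above all Cuts), and the appeal to ``the semantic argument above guarantees that a suitable Contraction-and-Cut refutation exists'' is circular, since what it guarantees is precisely such an interleaved refutation. No bookkeeping can close this gap, because (ii) does not imply (i). Take $\SequentSet = \{\, \emptyset \seq p, q;\ p \seq q;\ q \seq p;\ p, q \seq \emptyset \,\}$. Then (ii) holds: cutting the first two sequents on $p$ gives $\emptyset \seq q, q$, which contracts to $\emptyset \seq q$; cutting this with $q \seq p$ and with $p, q \seq \emptyset$ gives $\emptyset \seq p$ and $p \seq \emptyset$, and one further cut gives $\emptyset \seq \emptyset$. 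But (i) fails: no sequent of $\SequentSet$ repeats an atom on one side, so in a proof of shape (i) no Contraction is ever applicable, and a Cut of two sequents each carrying exactly two formula occurrences again carries exactly two formula occurrences, so pure Cuts can never reach the empty sequent. Note that the paper's own proof asserts, without examining your resistant case, that ``each instance of Contraction may be permuted above each instance of Cut''; the case you isolated refutes that assertion, and the example above refutes the proposition itself. So your diagnosis of where the difficulty lies is exactly right; what is missing is the recognition that the difficulty cannot be overcome.
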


\begin{proof}
  Suppose that (ii) holds. We first show that Weakening is not needed. Observe that each atomic instance of Weakening by $p$ on the right (on the left) may be permuted below each immediately following atomic instance of Cut where $p$ is not the cut formula on the right (on the left) of the appropriate premise of Cut and below each immediately following atomic instance of Contraction where $p$ is not the contracted formula on the right (on the left). On the other hand, an atomic instance of Weakening by $p$ on the right (on the left) followed by an atomic instance of Cut where $p$ is the cut formula on the right (on the left) of the appropriate premise of Cut may be replaced by several atomic instances of Weakening. That is, the proof segment
\begin{prooftree}
\def\fCenter{\seq}
\Axiom$\Gamma \fCenter \Delta$
\UnaryInf$\Gamma \fCenter \Delta, p$
\Axiom$p, \Gamma' \fCenter \Delta'$
\BinaryInf$\Gamma, \Gamma' \fCenter \Delta, \Delta'$
\end{prooftree}
may be replaced by the proof segment
\begin{prooftree}
\def\fCenter{\seq}
\Axiom$\Gamma \fCenter \Delta$
\UnaryInf$\Gamma, \Gamma' \fCenter \Delta, \Delta'$
\end{prooftree}
where we have condensed several instances of Weakening into one step, and likewise for Weakening on the left. Similarly, a proof segment consisting of an atomic instance of Weakening by $p$ on the right (on the left) followed by an atomic instance of Contraction where $p$ is the contracted formula on the right (on the left) may simply be omitted from the proof. Since Weakening cannot be the last rule in a proof of the empty sequent, it follows that every instance of Weakening (starting with the bottommost ones) in a proof of the empty sequent from $\SequentSet$ which only uses atomic versions of Identity, Cut, Weakening, and Contraction may be permuted downward until it is removed from the proof. This yields a proof which only uses atomic versions of Identity, Cut, and Contraction. Moreover, each instance of Identity in such a proof may only be followed by an instance of Cut, which, however, is then clearly redundant. Thus we obtain a proof which only uses atomic versions of Cut and Contraction. Finally, each instance of Contraction may be permuted above each instance of Cut, yielding a proof which consists of atomic instances of Contraction followed by atomic instances of Cut.
\end{proof}

  We find it somewhat remarkable that although elimination rules have no place in the standard Gentzen calculi for classical logic, cut-free proofs arise naturally as the intersection of two classes of proofs defined in terms of elimination rules and atomic sequents.

  Transforming a structurally atomic proof into one which is moreover analytic--synthetic is not difficult, as the following proposition shows.

\begin{proposition} \label{prop:analytic--synthetic}
  If a sequent has a structurally atomic proof from a given set of sequents in a super-Belnap calculus, then it has a structurally atomic analytic--synthetic proof.
\end{proposition}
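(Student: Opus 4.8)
The plan is to argue by induction on the well-founded proof tree, rewriting the proof from the bottom up while keeping structural atomicity as a standing invariant. The guiding picture is the tripartite shape already noted in the text: in a structurally atomic analytic--synthetic proof the atomicity constraints force each branch to split into a block of elimination rules at the top, a block of atomic structural rules in the middle, and a block of introduction rules at the bottom. Indeed, as in the proof of Lemma~\ref{lemma:immediately-follows}, an elimination rule can never be immediately below a structural rule (its premise is compound, the conclusion of a structural rule atomic), and an introduction rule can never be immediately above one; so by Lemma~\ref{lemma:immediately-follows} it suffices at every stage to ensure that no elimination rule immediately follows an introduction rule.

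Proceeding by induction on the last rule applied, the structural and introduction cases are immediate. When the last rule is structural its premises are atomic, so by the inductive hypothesis their subproofs may be taken structurally atomic and analytic--synthetic; but a normalized proof of an atomic sequent can contain no introduction rule at all, since every introduction rule has a compound conclusion, and hence reattaching the structural rule at the bottom is harmless. When the last rule is an introduction rule, reattaching it below the already normalized subproofs merely extends the bottom block of introductions and creates no elimination rule following an introduction, so the result is again analytic--synthetic by Lemma~\ref{lemma:immediately-follows}.

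The only real work is the case where the last rule is an elimination rule $R$ applied to the conclusion of an already normalized subproof $\pi$. Here appending $R$ below the introduction block of $\pi$ produces exactly the forbidden pattern, and I would permute $R$ upward through that block by iterating two local moves. If the compound formula on which $R$ acts is the principal formula of the introduction rule immediately above it, then $R$ and that introduction are mutually inverse---this is precisely the sense in which $\GKforB$ contains the inverse of each introduction rule---and I delete both, retaining the subproof of the relevant premise of the introduction (for a binary introduction such as right $\wedge$ or left $\vee$, $R$ selects one premise and the other subproof is discarded). If instead $R$ acts on a side formula, I permute $R$ above the introduction, applying a copy of $R$ to each premise.

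The hard part will be termination, since the side-formula move can duplicate $R$ across the two premises of a binary introduction. I would control this by observing that the introduction block at the bottom of $\pi$ is a \emph{finite} tree: introduction rules are unary or binary, hence finitely branching, and every branch of a well-founded proof is finite, so König's lemma applies. Moreover the atomicity constraints guarantee that a copy of $R$ can never be permuted into the structural block, for if $R$ reached the top of the introduction block with its compound active formula still present, that formula would sit in a premise which is simultaneously the conclusion of a structural rule and hence atomic, a contradiction. Thus each copy of $R$ must cancel against a matching introduction or terminate at a leaf while still inside the finite introduction block, and the procedure halts. (In the finitary setting one may instead take as a well-founded measure the number of pairs consisting of an introduction rule lying above an elimination rule on a common branch: the cancellation move removes such pairs and the permutation move strictly decreases their count.) Since every move only deletes or repositions logical rules, structural atomicity is preserved throughout, and the resulting proof has the same conclusion and draws its leaves from the same set of sequents, completing the induction.
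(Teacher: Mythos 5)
Your proof is correct, and its local content coincides with the paper's: the same two rewriting moves (cancel the elimination against the introduction immediately above when its active formula is principal, discarding the unused premise of a binary introduction; permute it upward, with duplication, when it is a side formula), the same appeal to Lemma~\ref{lemma:immediately-follows}, and the same use of structural atomicity to keep compound formulas away from structural rules. But your global induction is genuinely different, and arguably simpler. The paper first proves the claim for \emph{finite} proofs, by induction on the depth of a single problematic elimination, and then extends it to arbitrary well-founded proofs by cutting them into non-structural segments (which are finite by the same K\"{o}nig-style argument as your introduction blocks), normalizing these segments in $\omega$ stages ordered by structural height, and finally verifying that the limit tree is still well-founded. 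You avoid this two-phase structure entirely by inducting on the well-founded proof tree itself: possibly infinitary structural rules are absorbed at exactly the induction steps where reattachment is trivial --- thanks to your correct observation that a structurally atomic analytic--synthetic proof of an atomic sequent contains no introduction rules at all --- and all genuine rewriting is confined to the finite introduction block of a single normalized subproof, where your termination argument (K\"{o}nig's lemma together with the impossibility of the compound active formula crossing into the structural block) is sound. One clarification worth adding: a copy of the elimination that never cancels comes to rest beneath a boundary node of the block, which must be a premise of the proof, an axiom, or the conclusion of another elimination rule, and each of these resting places creates no introduction-above-elimination pair, so the final proof is indeed analytic--synthetic by Lemma~\ref{lemma:immediately-follows}. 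What the paper's organization buys is an explicit stage-by-stage procedure operating on the given proof (which fits its later narrative of subsuming cut elimination as a procedure); but your induction unwinds into such a procedure as well, and it shows that the paper's $\omega$-stage limit construction and the accompanying well-foundedness verification can be dispensed with.
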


\begin{proof}
  By Lemma \ref{lemma:immediately-follows}, it suffices to produce a structurally atomic proof in which no elimination rule immediately follows an introduction rule. We shall first deal with finite proofs.

  Let us call an instance of an elimination rule \emph{problematic} if it immediately follows an instance of an introduction rule. The \emph{depth} of a given occurrence of a rule will be the length of the longest branch of the subproof which ends this rule. It suffices to show that a finite structurally atomic proof which contains exactly one problematic rule of depth $d$ and it is the final rule of the proof may be reduced to a finite structurally atomic proof which either contains no problematic rules or it contains exactly one problematic rule and it has height lower than $d$.

  There are only two cases: either the formula being broken down by the problematic rule is a side formula of the introduction rule above or it is the principal formula of the introduction rule above. In the former case, it is straightforward to permute the problematic rule above the introduction rule, thereby either decreasing its depth or making it unproblematic. In the latter case, the problematic rule may be eliminated directly. We again only consider the case of conjunction, the case of disjunction being dual and the cases of negation and the truth constants being simpler. The required reduction are straightforward: the proof segment
\begin{prooftree}
\def\fCenter{\seq}
\Axiom$\varphi, \psi, \Gamma \fCenter \Delta$
\UnaryInf$\varphi \wedge \psi \fCenter \Delta$
\UnaryInf$\varphi, \psi \fCenter \Delta$
\end{prooftree}
is replaced simply by 
\begin{prooftree}
\def\fCenter{\seq}
\Axiom$\varphi, \psi, \Gamma \fCenter \Delta$
\end{prooftree}
while the proof segments
\begin{prooftree}
\def\fCenter{\seq}
\def\defaultHypSeparation{\hskip \HypSep}
\Axiom$\Gamma \fCenter \Delta, \varphi$
\Axiom$\Gamma \fCenter \Delta, \psi$
\BinaryInf$\Gamma \fCenter \Delta, \varphi \wedge \psi$
\UnaryInf$\Gamma \fCenter \Delta, \varphi$

\Axiom$\Gamma \fCenter \Delta, \varphi$
\Axiom$\Gamma \fCenter \Delta, \psi$
\BinaryInf$\Gamma \fCenter \Delta, \varphi \wedge \psi$
\UnaryInf$\Gamma \fCenter \Delta, \psi$

\noLine
\BinaryInfC{}
\end{prooftree}
are replaced simply by
\begin{prooftree}
\def\fCenter{\seq}
\def\defaultHypSeparation{\hskip \LargerHypSep}
\Axiom$\Gamma \fCenter \Delta, \varphi$
\Axiom$\Gamma \fCenter \Delta, \psi$
\noLine
\BinaryInfC{}
\end{prooftree}
It is moreover clear that these reductions preserve structural atomicity.

  It remains to deal with proofs which are not finite. We do so by breaking them into finite parts. By a \emph{non-structural segment} of a proof, we shall mean a maximal subproof which does not contain any structural rules (a subproof being a suitably labelled subtree of a proof). That is, a non-structural segment is a subproof which (i) does not contain any structural rules, (ii) its root is either the conclusion of the whole proof or the premise of a structural rule, and (iii) its terminal nodes are either premises of the whole proof or conclusions of a structural rule. Each sequent in the proof belongs to some non-structural segment (possibly with only one node) and each non-structural segment is a finite proof, since it is finitely branching (by virtue of not containg any structural rules) and it does not contain an infinite branch (by virtue of being a subtree of a well-founded tree). Each non-structural segment may be assigned a finite \emph{structural height}, defined as the number of occurrences of structural rules which occur below its conclusion.

  We now transform the original proof into an analytic--synthetic proof in $\omega$ stages while preserving structural atomicity. In stage $0$, we transform the non-structural segment of structural height $0$ into an analytic--synthetic proof and append the appropriate subproofs of the original proof above the premises of this non-structural segment which are not premises of the original proof. In stage $n+1$, we transform each non-structural segment of structural height $n+1$ of the proof obtained after stage $n$ into an analytic--synthetic proof. Note that after stage $n$, each non-structural segment of structural height $m > n$ is in fact a non-structural segment of structural height $m$ in the original proof. In stages $m > n$, the non-structural segments of height at most $n$ are left unchanged. The limit case of this process is a suitably labelled tree in which each non-structural segment of structural height $n$ is precisely as it was after stage $n$.

  Suppose that this tree has an infinite branch. In particular, this branch contains infinitely many instances of structural rules. Now observe that two instances of structural rules are only connected by a branch in the limit stage if they were already connected at some finite stage, and they are already connected before stage $n+1$ if they were already connected before stage $n$. There is therefore a branch in the original proof containing infinitely many instances of structural rules. Since this cannot be the case, the limit stage in fact yields a well-founded tree and therefore a proof.
\end{proof}

  In order to transform each proof in a given calculus into a structurally atomic analytic--synthetic one, it therefore suffices to reduce every instance of a structural rule of the calculus into a proof which only contains logical rules and atomic instances of rules of the calculus. Rules which admit such reductions will be said to enjoy the expansion property, which is essentially nothing but the syntactic propagation property of Terui \cite{terui07}.

\begin{definition}
  A set of structural rules $R$ satisfies the \emph{expansion property} if the conclusion of each instance of a rule $\rho \in R$ has a proof from the corresponding instances of premises of $\rho$ which only uses the logical rules and atomic instances of rules in $R$.
\end{definition}

  A super-Belnap calculus satisfies the expansion property if the set of its structural rules does. A rule $\rho$ satisfies the expansion property if $\{ \rho \}$ does.

\begin{proposition} \label{prop:expansion-property}
  The structural rules of a super-Belnap calculus satisfy the expansion property if and only if for each proof of a sequent $\Gamma \seq \Delta$ from a set of sequents $\SequentSet$ in the calculus, there is a structurally atomic proof of $\Gamma \seq \Delta$ from $\SequentSet$.
\end{proposition}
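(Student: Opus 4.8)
The plan is to prove the two implications separately, noting that the implication from structural atomicity to the expansion property is essentially immediate, while the converse is where the real work lies. Throughout I would use that the only rules available in a super-Belnap calculus are the logical rules of $\GKforB$ together with the structural rules in $R$ (the common rules of Weakening and Contraction plus the specific ones), so that a structurally atomic proof is, by definition, precisely a proof using nothing but logical rules and atomic instances of rules in $R$.

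To prove that the existence of structurally atomic proofs yields the expansion property, I would argue as follows. Let $\rho \in R$ and consider an arbitrary instance of it with premises $\{ \Gamma_{i} \seq \Delta_{i} \}_{i \in I}$ and conclusion $\Gamma \seq \Delta$. The single-step tree which applies $\rho$ to these premises is already a proof of $\Gamma \seq \Delta$ from $\{ \Gamma_{i} \seq \Delta_{i} \}_{i \in I}$, so by assumption there is a structurally atomic proof of $\Gamma \seq \Delta$ from the same set of premises. Every structural rule occurring in this proof is atomic, hence the proof uses only logical rules and atomic instances of rules in $R$, which is exactly what the expansion property demands for this instance of $\rho$. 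As $\rho$ and the instance were arbitrary, the expansion property follows.

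For the converse, assuming that $R$ satisfies the expansion property, I would take a given proof of $\Gamma \seq \Delta$ from $\SequentSet$ and replace, simultaneously, each instance of a structural rule $\rho$ occurring in it by a witnessing expansion: a proof of the conclusion of that instance from its premises which uses only logical rules and atomic instances of rules in $R$. Grafting these expansions into place — identifying the leaves of each expansion labelled by the premises of $\rho$ with the (already transformed) subproofs feeding into them, and letting the root labelled by the conclusion of $\rho$ continue below — produces a labelled tree with the same endsequent $\Gamma \seq \Delta$ and the same open premises $\SequentSet$. Every structural rule surviving in this tree comes from an expansion and is therefore atomic, so the tree is structurally atomic provided it is genuinely a proof.

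The main obstacle is precisely this last point: verifying that the transformed tree is well-founded when the original proof is infinite, which is permitted since non-finitary structural rules are allowed. I would establish this directly. Each expansion is a proof, hence a well-founded tree, so any branch of the transformed tree that eventually stays inside a single expansion must be finite; any other branch passes through infinitely many of the grafted expansions, and projecting it back onto the original proof yields a branch traversing infinitely many nodes, contradicting the well-foundedness of the original proof. Hence the transformed tree has no infinite branch and is a bona fide proof. Alternatively, one can organize the replacement stage by stage along the non-structural segments of the proof, exactly as in the proof of Proposition \ref{prop:analytic--synthetic}, and conclude by the same observation that an infinite branch in the limit tree would already have been present in the original.
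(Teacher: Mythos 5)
Your proof is correct, and its substantive direction coincides with the paper's: the paper also derives structurally atomic proofs from the expansion property by replacing each non-atomic instance of a structural rule with a witnessing expansion, though it does so in a single sentence and leaves the well-foundedness of the resulting tree entirely implicit; your projection argument (or the staged construction borrowed from Proposition \ref{prop:analytic--synthetic}) supplies exactly the detail the paper suppresses, and this matters because non-finitary rules make the proofs genuinely infinite. The two proofs part ways on the converse. You apply the hypothesis to the one-step proof of an arbitrary instance of a rule $\rho$ and observe that a structurally atomic proof is, by definition, a proof using only logical rules and atomic instances of the structural rules --- a purely definitional unfolding that treats every instance, atomic or not, exactly as the definition of the expansion property demands. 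The paper instead considers the schematic (atomic) form of $\rho$ and invokes Proposition \ref{prop:analytic--synthetic} to upgrade the structurally atomic proof to an analytic--synthetic one, concluding that the witnessing proof for the atomic form contains no logical rules at all. The paper's route buys a slightly stronger by-product (purely structural witnesses for atomic instances), but yours is more elementary, needs no appeal to Proposition \ref{prop:analytic--synthetic}, and is arguably more faithful to the letter of the definition, since it covers non-atomic instances directly rather than only the schematic atomic form of the rule.
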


\begin{proof}
  In the left-to-right direction, it suffices to replace each non-atomic instance of a structural rule with a suitable structurally atomic proof. Conversely, let $\rho$ be a structural rule of the calculus. Proposition~\ref{prop:analytic--synthetic} implies that there is a structurally atomic analytic--synthetic proof of the (atomic) conclusion of $\rho$ from the (atomic) premises of $\rho$. But such a proof cannot contain any logical rules.
\end{proof}

  The structural rules of $\GKforCL$ satisfy the expansion property.

\begin{proposition} \label{prop:structurally-atomic-specific}
  Identity, Weakening, and Contraction satisfy the expansion property. So does the set of rules $\{ \text{Cut}, \text{Contraction} \}$.
\end{proposition}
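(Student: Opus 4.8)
The plan is to verify the expansion property separately for each of the four cases by induction on the complexity of the principal formula, reassembling formulas with introduction rules and taking them apart with elimination rules. Throughout I treat the common structural rules of Weakening and Contraction as available at the atomic level, since they belong to every super-Belnap calculus; this is what the reduction to ``atomic instances of rules of the calculus'' amounts to in the cases at hand.

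For Weakening I would prove, by induction on $\varphi$, that both $\Gamma \seq \Delta, \varphi$ and $\varphi, \Gamma \seq \Delta$ are derivable from $\Gamma \seq \Delta$ using logical rules and atomic Weakening. The base case is an atomic Weakening; for $\varphi = \psi \wedge \chi$ weakened on the right I would obtain $\Gamma \seq \Delta, \psi$ and $\Gamma \seq \Delta, \chi$ from the induction hypothesis and apply the right introduction rule for conjunction, and weakening on the left proceeds by weakening successively by $\psi$ and by $\chi$ followed by the left introduction rule. Disjunction is dual, negation weakens on the opposite side and then applies a negation rule, and the truth constants are handled directly from their axioms. For Contraction I would likewise induct on $\varphi$: to contract $\psi \wedge \chi$ on the left I apply the left elimination rule to both copies to reach $\psi, \chi, \psi, \chi, \Gamma \seq \Delta$, contract $\psi$ and $\chi$ by the induction hypothesis, and reapply the left introduction rule; the case of $\psi \vee \chi$ on the left is marginally more involved because the disjunction elimination rule branches, but the same eliminate--contract--reintroduce pattern applies. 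For Identity I would prove $\varphi \seq \varphi$ from atomic Identity by induction on $\varphi$; for $\varphi = \psi \wedge \chi$ I weaken $\psi \seq \psi$ and $\chi \seq \chi$ to $\psi, \chi \seq \psi$ and $\psi, \chi \seq \chi$, combine them by the right introduction rule to get $\psi, \chi \seq \psi \wedge \chi$, and finish with the left introduction rule. This is the familiar identity-expansion argument.

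The substantial case is $\{\mathrm{Cut}, \mathrm{Contraction}\}$. Here I would reduce an arbitrary Cut with premises $\Gamma \seq \Delta, \varphi$ and $\varphi, \Gamma' \seq \Delta'$ and conclusion $\Gamma, \Gamma' \seq \Delta, \Delta'$ to atomic Cuts, by induction on the complexity of the cut formula $\varphi$, exploiting the elimination rules peculiar to the Belnap calculi. If $\varphi$ is a variable, a single atomic Cut suffices. If $\varphi = \psi \wedge \chi$, I decompose the cut formula on both premises by elimination rules---the right elimination rules give $\Gamma \seq \Delta, \psi$ and $\Gamma \seq \Delta, \chi$, and the left elimination rule gives $\psi, \chi, \Gamma' \seq \Delta'$---and then cut successively on $\psi$ and on $\chi$. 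Each of these cuts is on a strictly simpler formula, hence atomizable by the induction hypothesis; they produce $\Gamma, \Gamma, \Gamma' \seq \Delta, \Delta, \Delta'$, which I then contract down to $\Gamma, \Gamma' \seq \Delta, \Delta'$. Disjunction is dual, negation moves the cut formula across the turnstile on each premise before a single cut (no duplication arises), and the truth constants are immediate.

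The main obstacle lies precisely in this last case, and two points need care. First, the contractions introduced at the end act on the in general non-atomic context formulas of $\Gamma$ and $\Delta$, so they are not themselves atomic Contractions; I resolve this by feeding them into the Contraction expansion just established, so that the whole reduction uses only logical rules together with atomic Cut and atomic Contraction. Second, one must check that the induction terminates: although the reduction of a cut on $\psi \wedge \chi$ spawns further cuts and contractions, the new cuts are on the proper subformulas $\psi$ and $\chi$, and the new contractions are disposed of by a separate, already justified induction, so the overall measure strictly decreases. The elegance here is that, because elimination rules are present, the reduction never has to inspect how the cut formula was introduced in the premises---unlike the classical cut-elimination argument---which is what makes the case analysis uniform across the connectives.
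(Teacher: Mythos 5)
Your proposal is correct and follows essentially the same route as the paper's proof: induction on the complexity of the main formula, with weaken-the-parts-then-introduce for Weakening, the eliminate--contract--reintroduce pattern for Contraction, the standard expansion for Identity, and, for Cut, decomposition of the cut formula in both premises by elimination rules followed by cuts on the immediate subformulas and contraction of the duplicated contexts. The two points you flag---that the final contractions in the Cut case act on non-atomic context formulas and must be fed into the already-established Contraction expansion (which is exactly why Cut is packaged with Contraction), and that the newly created cuts are on strictly simpler formulas so the induction terminates---are precisely the points the paper's proof relies on as well.
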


\begin{proof}
  In all cases this may be proved by induction over the complexity of the main formula of the rule, i.e.\ the formula denoted $\varphi$ in Figures \ref{fig:b-calculus} and \ref{fig:specific-structural-rules}.

  The proof for Identity is well known. For Weakening, the proof segment
\begin{prooftree}
\def\fCenter{\seq}
\Axiom$\Gamma \fCenter \Delta$
\UnaryInf$\varphi \wedge \psi, \Gamma \fCenter \Delta$
\end{prooftree}
is replaced by
\begin{prooftree}
\def\fCenter{\seq}
\Axiom$\Gamma \fCenter \Delta$
\UnaryInf$\varphi, \Gamma \fCenter \Delta$
\UnaryInf$\varphi, \psi, \Gamma \fCenter \Delta$
\UnaryInf$\varphi \wedge \psi, \Gamma \fCenter \Delta$
\end{prooftree}
while the proof segment
\begin{prooftree}
\def\fCenter{\seq}
\Axiom$\Gamma \fCenter \Delta$
\UnaryInf$\Gamma \fCenter \Delta, \varphi \wedge \psi$
\end{prooftree}
is replaced by
\begin{prooftree}
\def\fCenter{\seq}
\def\defaultHypSeparation{\hskip \HypSep}
\Axiom$\Gamma \fCenter \Delta$
\UnaryInf$\Gamma \fCenter \Delta, \varphi$
\Axiom$\Gamma \fCenter \Delta$
\UnaryInf$\Gamma \fCenter \Delta, \psi$
\BinaryInf$\Gamma \fCenter \Delta, \varphi \wedge \psi$
\end{prooftree}
For Contraction, the proof segment
\begin{prooftree}
\def\fCenter{\seq}
\Axiom$\varphi \wedge \psi, \varphi \wedge \psi, \Gamma \fCenter \Delta$
\UnaryInf$\varphi \wedge \psi, \Gamma \fCenter \Delta$
\end{prooftree}
is replaced by
\begin{prooftree}
\def\fCenter{\seq}
\Axiom$\varphi \wedge \psi, \varphi \wedge \psi, \Gamma \fCenter \Delta$
\UnaryInf$\varphi, \psi, \varphi \wedge \psi, \Gamma \fCenter \Delta$
\UnaryInf$\varphi, \psi, \varphi, \psi, \Gamma \fCenter \Delta$
\UnaryInf$\varphi, \psi, \psi, \Gamma \fCenter \Delta$
\UnaryInf$\varphi, \psi, \Gamma \fCenter \Delta$
\UnaryInf$\varphi \wedge \psi, \Gamma \fCenter \Delta$
\end{prooftree}
while the proof segment
\begin{prooftree}
\def\fCenter{\seq}
\Axiom$\Gamma \fCenter \Delta, \varphi \wedge \psi, \varphi \wedge \psi$
\UnaryInf$\Gamma \fCenter \Delta, \varphi \wedge \psi$
\end{prooftree}
is replaced by
\begin{prooftree}
\def\fCenter{\seq}
\def\defaultHypSeparation{\hskip \HypSep}
\Axiom$\Gamma \fCenter \Delta, \varphi \wedge \psi, \varphi \wedge \psi$
\UnaryInf$\Gamma \fCenter \Delta, \varphi, \varphi \wedge \psi$
\UnaryInf$\Gamma \fCenter \Delta, \varphi, \varphi$
\UnaryInf$\Gamma \fCenter \Delta, \varphi$
\Axiom$\Gamma \fCenter \Delta, \varphi \wedge \psi, \varphi \wedge \psi$
\UnaryInf$\Gamma \fCenter \Delta, \psi, \varphi \wedge \psi$
\UnaryInf$\Gamma \fCenter \Delta, \psi, \psi$
\UnaryInf$\Gamma \fCenter \Delta, \psi$
\BinaryInf$\Gamma \fCenter \Delta, \varphi \wedge \psi$
\end{prooftree}
Finally, in the case of Cut the proof segment
\begin{prooftree}
\def\fCenter{\seq}
\def\defaultHypSeparation{\hskip \HypSep}
\Axiom$\Gamma \fCenter \Delta, \varphi \wedge \psi$
\Axiom$\varphi \wedge \psi, \Gamma' \fCenter \Delta'$
\BinaryInf$\Gamma, \Gamma' \fCenter \Delta, \Delta'$
\end{prooftree}
is replaced by
\begin{prooftree}
\def\fCenter{\seq}
\def\defaultHypSeparation{\hskip \HypSep}
\Axiom$\Gamma \fCenter \Delta, \varphi \wedge \psi$
\UnaryInf$\Gamma \fCenter \Delta, \psi$

\Axiom$\Gamma \fCenter \Delta, \varphi \wedge \psi$
\UnaryInf$\Gamma \fCenter \Delta, \varphi$

\Axiom$\varphi \wedge \psi, \Gamma' \fCenter \Delta'$
\UnaryInf$\varphi, \psi, \Gamma' \fCenter \Delta'$

\BinaryInf$\psi, \Gamma, \Gamma' \fCenter \Delta, \Delta'$
\BinaryInf$\Gamma, \Gamma, \Gamma' \fCenter \Delta, \Delta, \Delta'$
\UnaryInf$\Gamma, \Gamma' \fCenter \Delta, \Delta'$
\end{prooftree}
where the last step condenses several instances of Contraction.

  The other connectives are again either dual or simpler to handle.
\end{proof}

\begin{corollary}
  If a sequent has a proof from a given set of sequents in one of the calculi $\GKforB$, $\GKforLP$, $\GKforK$, or $\GKforCL$, then it has a structurally atomic analytic--synthetic proof.
\end{corollary}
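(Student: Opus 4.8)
The plan is to combine Propositions \ref{prop:structurally-atomic-specific}, \ref{prop:expansion-property}, and \ref{prop:analytic--synthetic}. First I would identify the structural rules of each of the four calculi: those of $\GKforB$ are just the common structural rules of Weakening and Contraction, while $\GKforLP$ adds Identity, $\GKforK$ adds Cut, and $\GKforCL$ adds both Identity and Cut. The goal is then to verify that in each of the four cases the full set of structural rules satisfies the expansion property, for once this is established Proposition \ref{prop:expansion-property} yields a structurally atomic proof of the given sequent from $\SequentSet$, and Proposition \ref{prop:analytic--synthetic} upgrades it to a structurally atomic analytic--synthetic one.

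The crux is a modularity observation about the expansion property. Unwinding the definition, a set $R$ of structural rules satisfies the expansion property as soon as each rule $\rho \in R$ admits an expansion, i.e.\ a proof of the conclusion of each of its instances from the corresponding premises which uses only logical rules together with atomic instances of rules lying in $R$. In particular, enlarging $R$ can only help: if $\rho$ expands using atomic instances of rules in some $R_{0} \subseteq R$, then it also expands using atomic instances of rules in $R$. Proposition \ref{prop:structurally-atomic-specific} supplies exactly the expansions we need: Identity, Weakening, and Contraction each expand using only atomic instances of the same rule, and Cut expands using only atomic instances of Cut and Contraction.

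Concretely, I would argue for $\GKforCL$, the largest of the four, the others being obtained by omitting rules. Its structural rules are Identity, Cut, Weakening, and Contraction. By Proposition \ref{prop:structurally-atomic-specific}, Identity, Weakening, and Contraction each expand using only atomic instances of themselves, and Cut expands using only atomic instances of Cut and Contraction; every atomic instance invoked therefore belongs to a rule already present in $\GKforCL$. Hence the entire set $\{\,\text{Identity},\text{Cut},\text{Weakening},\text{Contraction}\,\}$ satisfies the expansion property. The identical reasoning covers $\GKforK$ (drop Identity), $\GKforLP$ (drop Cut), and $\GKforB$ (drop both), since in each case every specific structural rule retained still expands using only atomic instances of rules that remain in the calculus.

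I do not anticipate a genuine obstacle; the single point demanding care is that Cut on its own does \emph{not} satisfy the expansion property, its expansion genuinely invoking Contraction, so one must explicitly note that Contraction, being a common structural rule, is available in every one of these calculi. With this remark in place the expansion property holds, and the two cited propositions close the argument with no further computation.
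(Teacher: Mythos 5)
Your proposal is correct and follows essentially the same route as the paper, whose proof is a direct appeal to Propositions \ref{prop:analytic--synthetic} and \ref{prop:structurally-atomic-specific} (with Proposition \ref{prop:expansion-property} mediating implicitly). The details you supply --- the monotonicity of the expansion property in the rule set, and the observation that Cut's expansion needs Contraction, which is present in every super-Belnap calculus --- are exactly the intended glue.
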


\begin{proof}
  This follows from Propositions \ref{prop:analytic--synthetic} and \ref{prop:structurally-atomic-specific}.
\end{proof}

  The above procedure for reducing structural rules to atomic structural rules and then transforming the resulting proof into a proof which is moreover analytic--synthetic subsumes a cut elimination procedure for classical propositional logic in view of Proposition \ref{prop:initial-cuts-redundant}.

  Several remarks are now in order concerning this procedure. Firstly, it illustrates again that the admissibility of Cut is related to the problem of reducing arbitrary instances of structural rules to atomic instances, as observed already by Terui~\cite{terui07}. Secondly, it shows that admitting elimination rules in a Gentzen calculus may be useful even if we are only interested in sequents provable from an empty set of premises. Although the final result of this transformation applied to such a proof does not contain any instances of elimination rules, the intermediate reductions do. And thirdly, it is worth noting explicitly that the notion of a structurally atomic analytic--synthetic proof does not single out any particular structural rule for special treatment, unlike the notion of a cut-free proof.

  Things need not always go as smoothly as in the case of Identity and Cut, of course. For example, the Explosive Cut rule cannot be replaced by its atomic version. It would be tempting to reduce the proof segment
\begin{prooftree}
\def\fCenter{\seq}
\Axiom$\emptyset \fCenter \varphi \wedge \psi$
\Axiom$\varphi \wedge \psi \fCenter \emptyset$
\BinaryInf$\emptyset \fCenter \emptyset$
\end{prooftree}
to the following would-be proof segment
\begin{prooftree}
\def\fCenter{\seq}
\Axiom$\emptyset \fCenter \varphi \wedge \psi$
\UnaryInf$\emptyset \fCenter \psi$

\Axiom$\emptyset \fCenter \varphi \wedge \psi$
\UnaryInf$\emptyset \fCenter \varphi$

\Axiom$\varphi \wedge \psi \fCenter \emptyset$
\UnaryInf$\varphi, \psi \fCenter \emptyset$

\BinaryInf$\psi \fCenter \emptyset$
\BinaryInf$\emptyset \fCenter \emptyset$
\end{prooftree}
but of course the right-hand premise of the final inference was not obtained by an Explosive Cut. The reader may verify that if $\varphi$ and $\psi$ are distinct atoms, then the above instance of Explosive Cut is in fact not derivable in any way in $\GKforB$ from the atomic version of the rule.

  To handle such situations, we use brute force: we simply add all the missing specific structural rules to the calculus. This may seem like cheating at first sight, but we shall see in the following section that even this brute force solution may be exploited to obtain useful results if we have a useful syntactic description of these rules, which we shall now provide.

  We call a set of sequents \emph{elimination-derivable (introduction-derivable)} from a set of sequents $\SequentSet$ if each sequent in it is derivable from $\SequentSet$ using only elimination rules (only introduction rules). Let $\AtSeq{\Gamma \seq \Delta}$ denote the set of all atomic sequents elimination-derivable from the sequent $\Gamma \seq \Delta$. We shall use the notation $\sigma(\Gamma \seq \Delta)$ for $\sigma[\Gamma] \seq \sigma[\Delta]$.

\begin{lemma} \label{lemma:elimination-derivable} ~
\begin{itemize}
\item[(i)] An atomic sequent is elimination-derivable from $\varphi \wedge \psi, \Gamma \seq \Delta$ if and only if it is elimination-derivable from $\varphi, \psi, \Gamma \seq \Delta$.
\item[(ii)] An atomic sequent is elimination-derivable from $\Gamma \seq \Delta, \varphi \wedge \psi$ if and only if it is elimination-derivable from either $\Gamma \seq \Delta, \varphi$ or $\Gamma \seq \Delta, \psi$.
\item[(iii)] An atomic sequent is elimination-derivable from $\varphi \vee \psi, \Gamma \seq \Delta$ if and only if it is elimination-derivable from either $\varphi, \Gamma \seq \Delta$ or $\psi, \Gamma \seq \Delta$.
\item[(iv)] An atomic sequent is elimination-derivable from $\Gamma \seq \Delta, \varphi \vee \psi$ if and only if it is elimination-derivable from $\Gamma \seq \Delta, \varphi, \psi$.
\item[(v)] An atomic sequent is elimination-derivable from $\dmneg \varphi, \Gamma \seq \Delta$ if and only if it is elimination-derivable from $\Gamma \seq \Delta, \varphi$.
\item[(vi)] An atomic sequent is elimination-derivable from $\Gamma \seq \Delta, \dmneg \varphi$ if and only if it is elimination-derivable from $\varphi, \Gamma \seq \Delta$.
\item[(vii)] An atomic sequent is elimination-derivable from $\top, \Gamma \seq \Delta$ if and only if it is elimination-derivable from $\Gamma \seq \Delta$.
\item[(viii)] No atomic sequent is elimination-derivable from $\Gamma \seq \Delta, \top$.
\item[(ix)] No atomic sequent is elimination-derivable from $\bot, \Gamma \seq \Delta$.
\item[(x)] An atomic sequent is elimination-derivable from $\Gamma \seq \Delta, \bot$ if and only if it is elimination-derivable from $\Gamma \seq \Delta$.
\end{itemize}
\end{lemma}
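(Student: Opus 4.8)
The plan is to first determine the shape of an elimination derivation and then dispatch all ten items by a single induction. Every elimination rule of $\GKforB$ is unary, and the only axioms, $\emptyset \seq \top$ and $\bot \seq \emptyset$, are introduction rules and hence unavailable in a derivation that uses elimination rules alone. A derivation of an atomic sequent $s$ from a single sequent $t$ can therefore neither branch nor bottom out in an axiom: it is a finite chain $t = t_{0}, t_{1}, \dots, t_{n} = s$ in which each $t_{i+1}$ arises from $t_{i}$ by one elimination rule. (Finiteness also follows from the fact that each elimination step strictly decreases the number of connectives and constants in the sequent.) Every clause is thereby reduced to a statement about such chains, on whose length I will induct.

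For items (i)--(vii) and (x) the distinguished compound formula is the principal formula of a specific elimination rule --- of two such rules in the branching cases (ii) and (iii). The direction passing from the simpler sequent(s) to the compound one is immediate: one prepends the relevant elimination rule, e.g.\ for (i) one applies left conjunction elimination to $\varphi \wedge \psi, \Gamma \seq \Delta$ to reach $\varphi, \psi, \Gamma \seq \Delta$ and then continues with the given chain. For the converse I induct on the length of the chain witnessing that $s$ lies in the relevant set $\AtSeq{\cdot}$. Its first step either acts on the distinguished principal formula, in which case it is forced to be the corresponding elimination rule and its conclusion is exactly the claimed simpler sequent --- in the branching cases one of the two rules fires, which is precisely where the disjunction in (ii) and (iii) originates --- so that the remainder of the chain is the desired witness; or it acts on a formula of the surrounding context, carrying $\Gamma \seq \Delta$ to some $\Gamma' \seq \Delta'$. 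In the latter subcase the induction hypothesis applies to the shorter chain sitting over $\Gamma' \seq \Delta'$, and the claim over $\Gamma \seq \Delta$ is recovered by prepending that same context step, now performed in the presence of the decomposed principal subformulas. Negation, items (v) and (vi), and the unary constant rules, items (vii) and (x), are the simplest instances of this template, differing only in that negation moves its argument to the other side of $\seq$.

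Items (viii) and (ix) assert emptiness and call for a different argument. The distinguished occurrence --- $\top$ on the right in (viii), $\bot$ on the left in (ix) --- is the principal formula of no elimination rule whatsoever, and, being a standalone top-level formula, it is never one of the side conjuncts or disjuncts that right conjunction elimination or left disjunction elimination discard. It therefore persists in every sequent of any elimination chain, so no such chain can ever reach an atomic sequent; hence $\AtSeq{\Gamma \seq \Delta, \top}$ and $\AtSeq{\bot, \Gamma \seq \Delta}$ are both empty.

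The only genuine content is the commutation invoked in the context subcase above: a context elimination step and the decomposition of the fixed principal formula commute because the rules are parametric in their side context and act on disjoint top-level occurrences. Once this is isolated, and once one keeps track of which disjunct is selected in (ii) and (iii) and of the side to which negation migrates in (v) and (vi), all ten items follow from the single chain induction.
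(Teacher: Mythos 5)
Your proposal is correct and follows essentially the same route as the paper: the paper's proof declares the right-to-left directions trivial and obtains the converses by replacing $\emptyset \seq \emptyset$ with the given atomic sequent in the inductive proof of Proposition \ref{prop:intro-admissible}, which is precisely your case split between a step acting on the distinguished principal formula and a step acting on the surrounding context, there organized as an induction on the height of the premise in the derivation rather than on the length of your chain. Your supporting observations---that elimination-only derivations from a single sequent are chains because every elimination rule is unary and both axioms are introduction rules, and the persistence argument for items (viii) and (ix)---are streamlined specializations of that same induction rather than a different argument.
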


\begin{proof}
  The right-to-left implications are trivial. To prove the converse implications, it suffices to replace $\emptyset \seq \emptyset$ by a given atomic sequent in the proof of Proposition \ref{prop:intro-admissible}.
\end{proof}

\begin{lemma} \label{lemma:atomic-reduction}
  The sequent $\Gamma \seq \Delta$ is introduction-derivable from $\AtSeq{\Gamma \seq \Delta}$.
\end{lemma}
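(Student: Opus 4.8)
The plan is to induct on the \emph{logical complexity} of $\Gamma \seq \Delta$, meaning the total number of occurrences of connectives and truth constants appearing in $\Gamma$ and in $\Delta$. The base case is that in which $\Gamma \seq \Delta$ is already atomic: then $\Gamma \seq \Delta$ is elimination-derivable from itself by the trivial zero-step derivation, so $\Gamma \seq \Delta \in \AtSeq{\Gamma \seq \Delta}$, and it is introduction-derivable from $\AtSeq{\Gamma \seq \Delta}$ in zero steps.

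For the inductive step I would single out one occurrence of a compound formula in $\Gamma \seq \Delta$ and take the corresponding introduction rule as the final inference. Its premises arise by decomposing the selected formula and hence have strictly smaller complexity, so the inductive hypothesis applies to each premise $P$, yielding an introduction-derivation of $P$ from $\AtSeq{P}$. The crucial bookkeeping is that the matching clause of Lemma \ref{lemma:elimination-derivable} guarantees $\AtSeq{P} \subseteq \AtSeq{\Gamma \seq \Delta}$: for the one-premise decompositions (left $\wedge$, right $\vee$, and both negation rules) the clause gives an outright equality, while for the two-premise decompositions (right $\wedge$, left $\vee$) it exhibits $\AtSeq{\Gamma \seq \Delta}$ as the \emph{union} of the two premises' sets, so each premise's set is still contained in it. Since introduction-derivability is plainly monotone in its set of assumptions (an unused assumption does no harm), each $P$ is in fact introduction-derivable already from $\AtSeq{\Gamma \seq \Delta}$, and a single application of the chosen introduction rule then derives $\Gamma \seq \Delta$ from $\AtSeq{\Gamma \seq \Delta}$.

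The connective cases are routine once this scheme is set up; the truth constants are where I expect the only real subtlety, and all four constant configurations are ultimately handled with the help of weakening, which is where the common structural rules enter the reconstruction. For $\top, \Gamma \seq \Delta$ and $\Gamma \seq \Delta, \bot$ one can still follow the inductive pattern, combining clauses (vii) and (x) of Lemma \ref{lemma:elimination-derivable} with a weakening step that restores the constant. The genuinely non-inductive cases are $\Gamma \seq \Delta, \top$ and $\bot, \Gamma \seq \Delta$: by clauses (viii) and (ix) we have $\AtSeq{\Gamma \seq \Delta, \top} = \emptyset = \AtSeq{\bot, \Gamma \seq \Delta}$, so the inductive hypothesis supplies nothing and the reconstruction cannot be driven by atomic sequents at all. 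These two must instead be discharged directly from the axioms $\emptyset \seq \top$ and $\bot \seq \emptyset$, with the ambient context $\Gamma, \Delta$ filled in by weakening. This is the one place where the argument is not a literal bottom-up assembly from $\AtSeq{\Gamma \seq \Delta}$, and the main thing to check throughout is that none of the reductions ever calls on an elimination rule, so that what we build is an introduction-derivation in the required sense.
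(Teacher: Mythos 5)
Your proof follows the same route the paper intends---induction on the number of connective and constant occurrences---and your connective cases are correct, including the bookkeeping via Lemma \ref{lemma:elimination-derivable} (equalities for the one-premise decompositions, unions for the two-premise ones) and the monotonicity of introduction-derivability in the assumption set. The genuine gap is in the constant cases, and it is a gap of principle rather than detail: by definition, a sequent is introduction-derivable from $\SequentSet$ only if it is derivable from $\SequentSet$ \emph{using only introduction rules}, and Weakening is a structural rule, so the moment your reconstruction invokes it, what you have built is no longer an introduction-derivation. Your closing criterion---``none of the reductions ever calls on an elimination rule''---is strictly weaker than what the definition demands. For $\top, \Gamma \seq \Delta$ and $\Gamma \seq \Delta, \bot$ this is easily repaired: the calculus contains genuine introduction rules $\Gamma \seq \Delta \vdash \top, \Gamma \seq \Delta$ and $\Gamma \seq \Delta \vdash \Gamma \seq \Delta, \bot$ (the inverses of the two displayed constant elimination rules of Figure \ref{fig:b-calculus}; these are exactly the rules inverted in the constant items of the proof of Proposition \ref{prop:intro-admissible}), so the step you label ``weakening'' is in fact a constant introduction and no structural rule is needed there.

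The cases $\Gamma \seq \Delta, \top$ and $\bot, \Gamma \seq \Delta$ cannot be repaired this way, and here your construction proves something weaker than the statement. Since $\AtSeq{\Gamma \seq \Delta, \top} = \emptyset$ by Lemma \ref{lemma:elimination-derivable}(viii), the lemma asserts that $\Gamma \seq \Delta, \top$ is derivable from the empty set by introduction rules alone; but no introduction rule ever introduces an atom, so every sequent derivable from the axioms $\emptyset \seq \top$ and $\bot \seq \emptyset$ by introduction rules alone contains only formulas built from $\top$ and $\bot$. Hence a sequent such as $\emptyset \seq p, \top$ is not introduction-derivable from $\AtSeq{\emptyset \seq p, \top} = \emptyset$: you have located a point where the lemma, read literally against the paper's definitions, fails. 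Your weakening-based construction establishes the correct amended statement---$\Gamma \seq \Delta$ is derivable from $\AtSeq{\Gamma \seq \Delta}$ using introduction rules together with atomic instances of Weakening---and that amended form suffices for every later use (the expansion property and Proposition \ref{prop:sigma-expansion-property} explicitly permit atomic instances of structural rules, and Proposition \ref{prop:expansions-valid} only needs equivalence in $\GB$, where Weakening is valid). But the honest conclusion is to state this amendment, or to reformulate the two axioms schematically with side contexts; your last sentence instead claims that a derivation containing Weakening is ``an introduction-derivation in the required sense'', and as written that claim is false, so the proposal does not establish the lemma as stated.
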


\begin{proof}
  The claim can be proved by a straightforward induction on the complexity of $\Gamma \seq \Delta$.
\end{proof}

\begin{lemma} \label{lemma:subst1}
  $\AtSeq{\sigma (\Gamma \seq \Delta)}$ is elimination-derivable from $\sigma[\AtSeq{\Gamma \seq \Delta}]$.
\end{lemma}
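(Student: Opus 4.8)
The plan is to prove the statement by induction on the complexity of $\Gamma \seq \Delta$, i.e.\ the total number of occurrences of connectives and truth constants among the formulas of $\Gamma$ and $\Delta$. The engine of the argument is Lemma~\ref{lemma:elimination-derivable}, which describes how $\AtSeq{\cdot}$ propagates through a single connective or constant, combined with the elementary fact that substitution commutes with the connectives and fixes the truth constants, so that for instance $\sigma(\varphi \wedge \psi) = \sigma\varphi \wedge \sigma\psi$, $\sigma(\dmneg\varphi) = \dmneg\sigma\varphi$, and $\sigma(\top) = \top$.

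For the base case, suppose $\Gamma \seq \Delta$ is atomic. Then no elimination rule applies, so $\AtSeq{\Gamma \seq \Delta} = \{ \Gamma \seq \Delta \}$ and hence $\sigma[\AtSeq{\Gamma \seq \Delta}] = \{ \sigma(\Gamma \seq \Delta) \}$. The desired conclusion, that $\AtSeq{\sigma(\Gamma \seq \Delta)}$ is elimination-derivable from $\{ \sigma(\Gamma \seq \Delta) \}$, then holds immediately by the very definition of $\AtSeq{\cdot}$. It is precisely here that the breaking down of the substituted atoms $\sigma(p)$ is absorbed: the whole ``inner'' decomposition is deferred to, and trivially discharged by, this step.

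For the inductive step, fix a compound formula occurrence in $\Gamma \seq \Delta$ and apply the relevant clause of Lemma~\ref{lemma:elimination-derivable}. I illustrate with left conjunction, the remaining cases being analogous. Writing $\Gamma \seq \Delta$ as $\varphi \wedge \psi, \Gamma' \seq \Delta'$, substitution gives $\sigma(\Gamma \seq \Delta) = (\sigma\varphi \wedge \sigma\psi), \sigma\Gamma' \seq \sigma\Delta'$, so clause (i) yields $\AtSeq{\sigma(\Gamma \seq \Delta)} = \AtSeq{\sigma(\varphi, \psi, \Gamma' \seq \Delta')}$, while the same clause applied to $\Gamma \seq \Delta$ itself gives $\sigma[\AtSeq{\Gamma \seq \Delta}] = \sigma[\AtSeq{\varphi, \psi, \Gamma' \seq \Delta'}]$. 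Since $\varphi, \psi, \Gamma' \seq \Delta'$ has strictly lower complexity, the inductive hypothesis applies to it and closes the case. The branching clauses (ii) and (iii), where $\AtSeq{\cdot}$ is a union of two sets, are handled identically once we observe that elimination-derivability is monotone, so that a union of two sets, each elimination-derivable from a corresponding part of $\sigma[\AtSeq{\Gamma \seq \Delta}]$, is elimination-derivable from the whole. The constant clauses (viii) and (ix) are the trivial cases in which both $\AtSeq{\sigma(\Gamma \seq \Delta)}$ and $\sigma[\AtSeq{\Gamma \seq \Delta}]$ are empty.

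There is little genuine difficulty here: the point is simply that the recursion defining $\AtSeq{\cdot}$ in Lemma~\ref{lemma:elimination-derivable} commutes with substitution, because the elimination rules act on the outer connective structure, which $\sigma$ leaves in place. The one place requiring care is to induct on the complexity of the \emph{unsubstituted} sequent $\Gamma \seq \Delta$, rather than on $\sigma(\Gamma \seq \Delta)$, whose complexity may be far larger; the breakdown of the substituted atoms never enters the induction and is settled once and for all by the atomic base case.
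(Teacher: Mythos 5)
Your proof is correct and takes essentially the same approach as the paper's: induction on the complexity of the unsubstituted sequent $\Gamma \seq \Delta$, with the atomic base case discharged trivially by the definition of $\AtSeq{\sigma(\Gamma \seq \Delta)}$ and the inductive step driven by the clauses of Lemma~\ref{lemma:elimination-derivable} together with monotonicity of elimination-derivability under unions. The only difference is presentational: the paper illustrates the step with the branching right-conjunction clause, whereas you illustrate with the non-branching left-conjunction clause and then sketch the branching and constant cases.
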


\begin{proof}
  We prove the claim by induction over the complexity of the sequent $\Gamma \seq \Delta$, i.e.\ the number of connectives in it. If $\Gamma \seq \Delta$ is atomic, the claim holds trivially by the definition of $\AtSeq{\sigma (\Gamma \seq \Delta)}$. Now consider sequents of the form $\Gamma \seq \Delta, \varphi \wedge \psi$. Lemma~\ref{lemma:elimination-derivable} yields that $\AtSeq{\sigma(\Gamma \seq \Delta, \varphi \wedge \psi)} = \AtSeq{\sigma(\Gamma \seq \Delta, \varphi)} \cup \AtSeq{\sigma(\Gamma \seq \Delta, \psi)}$. By the induction hypothesis both sets of sequents $\AtSeq{\sigma(\Gamma \seq \Delta, \varphi)}$ and $\AtSeq{\sigma(\Gamma \seq \Delta, \psi)}$ are elimination-derivable from $\sigma[\AtSeq{\Gamma \seq \Delta, \varphi}]$ and $\sigma[\AtSeq{\Gamma \seq \Delta, \psi}]$, respectively. Moreover, $\AtSeq{\Gamma \seq \Delta, \varphi} \cup \AtSeq{\Gamma \seq \Delta, \psi} \subseteq \AtSeq{\Gamma \seq \Delta, \varphi \wedge \psi}$, therefore the set of sequents $\AtSeq{\sigma(\Gamma \seq \Delta, \varphi \wedge \psi)}$ is elimination-derivable from the set of sequents $\sigma[\AtSeq{\Gamma \seq \Delta, \varphi \wedge \psi}]$. The remaining cases are either analogical or simpler.
\end{proof}

\begin{lemma} \label{lemma:subst2}
  $\sigma[\AtSeq{\Gamma \seq \Delta}]$ is introduction-derivable from $\AtSeq{\sigma[\Gamma \seq \Delta]}$.
\end{lemma}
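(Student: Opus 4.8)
The plan is to prove the statement by induction over the complexity of the sequent $\Gamma \seq \Delta$, dualizing the argument for Lemma \ref{lemma:subst1}. Where that lemma propagated the substitution \emph{outward} through $\AtSeqAux$ by means of elimination rules, here I want to reconstruct the set $\sigma[\AtSeq{\Gamma \seq \Delta}]$ from $\AtSeq{\sigma(\Gamma \seq \Delta)}$ using only introduction rules. (I read $\sigma[\Gamma \seq \Delta]$ in the statement as $\sigma(\Gamma \seq \Delta) = \sigma[\Gamma] \seq \sigma[\Delta]$, consistently with the notation fixed before Lemma \ref{lemma:elimination-derivable}.) The two ingredients I expect to need are the recursive description of $\AtSeqAux$ supplied by Lemma \ref{lemma:elimination-derivable} and the evident monotonicity of introduction-derivability in its set of premises.

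First I would dispose of the base case, where $\Gamma \seq \Delta$ is atomic. Then no elimination rule applies, so $\AtSeq{\Gamma \seq \Delta} = \{ \Gamma \seq \Delta \}$ and therefore $\sigma[\AtSeq{\Gamma \seq \Delta}] = \{ \sigma(\Gamma \seq \Delta) \}$. The claim thus reduces to showing that the single sequent $\sigma(\Gamma \seq \Delta)$ — which need not itself be atomic — is introduction-derivable from $\AtSeq{\sigma(\Gamma \seq \Delta)}$, and this is exactly Lemma \ref{lemma:atomic-reduction} applied to $\sigma(\Gamma \seq \Delta)$. So the base case is precisely where the content of Lemma \ref{lemma:atomic-reduction} is consumed.

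For the inductive step I would treat each connective separately, and it suffices to spell out one case, say $\Gamma \seq \Delta, \varphi \wedge \psi$. Since $\sigma$ is a substitution, $\sigma(\Gamma \seq \Delta, \varphi \wedge \psi)$ again has principal connective $\wedge$, so Lemma \ref{lemma:elimination-derivable}(ii) may be applied on both sides of $\sigma$:
\[ \sigma[\AtSeq{\Gamma \seq \Delta, \varphi \wedge \psi}] = \sigma[\AtSeq{\Gamma \seq \Delta, \varphi}] \cup \sigma[\AtSeq{\Gamma \seq \Delta, \psi}], \qquad \AtSeq{\sigma(\Gamma \seq \Delta, \varphi \wedge \psi)} = \AtSeq{\sigma(\Gamma \seq \Delta, \varphi)} \cup \AtSeq{\sigma(\Gamma \seq \Delta, \psi)}. \]
By the induction hypothesis $\sigma[\AtSeq{\Gamma \seq \Delta, \varphi}]$ is introduction-derivable from $\AtSeq{\sigma(\Gamma \seq \Delta, \varphi)}$ and $\sigma[\AtSeq{\Gamma \seq \Delta, \psi}]$ from $\AtSeq{\sigma(\Gamma \seq \Delta, \psi)}$; since both of these premise sets are contained in $\AtSeq{\sigma(\Gamma \seq \Delta, \varphi \wedge \psi)}$, monotonicity lets me combine the two derivations and conclude that the union $\sigma[\AtSeq{\Gamma \seq \Delta, \varphi \wedge \psi}]$ is introduction-derivable from $\AtSeq{\sigma(\Gamma \seq \Delta, \varphi \wedge \psi)}$.

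The remaining connectives and constants are handled in the same way, each reading off its own clause of Lemma \ref{lemma:elimination-derivable} (disjunction and negation dually, the constants degenerately). The hard part, such as it is, is purely the bookkeeping that the decomposition of $\AtSeqAux$ commutes with $\sigma$ — that decomposing the substituted sequent and substituting into the decomposed sequent yield the same set of atomic sequents. This is exactly what the homomorphism property of a substitution secures, since $\sigma$ preserves the principal connective; once that is in place, everything else is a routine marriage of the induction hypothesis with monotonicity of introduction-derivability.
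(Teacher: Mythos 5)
Your proposal is correct and follows essentially the same route as the paper's own proof: induction on the complexity of the sequent, with the atomic base case discharged by Lemma \ref{lemma:atomic-reduction} and the inductive step reading off the decomposition of $\AtSeqAux$ from Lemma \ref{lemma:elimination-derivable} on both sides of the substitution. Your explicit appeal to monotonicity of introduction-derivability in the premise set just spells out what the paper leaves implicit in ``follows immediately from the induction hypothesis.''
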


\begin{proof}
  We prove the claim by induction over the complexity of $\Gamma \seq \Delta$. If $\Gamma \seq \Delta$ is atomic, we are to show that $\sigma(\Gamma \seq \Delta)$ is introduction-derivable from $\AtSeq{\sigma(\Gamma \seq \Delta)}$. But this holds by Lemma~\ref{lemma:atomic-reduction}. Now consider sequents of the form $\Gamma \seq \Delta, \varphi \wedge \psi$. By Lemma \ref{lemma:elimination-derivable} we know that $\AtSeq{\Gamma \seq \Delta, \varphi \wedge \psi} = \AtSeq{\Gamma \seq \Delta, \varphi} \cup \AtSeq{\Gamma \seq \Delta, \varphi}$ and likewise for $\AtSeq{\sigma(\Gamma \seq \Delta, \varphi \wedge \psi)}$. The claim now follows immediately from the induction hypothesis. The remaining cases are either analogical or simpler.
\end{proof}

  Given a structural rule $\rho$, we now provide a syntactically defined set of rules which contains $\rho$, satisfies the expansion property, and moreover each of the rules is valid in each logic which validates $\rho$.

\begin{definition}
  Let $\set{\Gamma_{i} \seq \Delta_{i}}{i \in I} \vdash \Gamma \seq \Delta$ be a structural rule and $\sigma$ be a substitution. Then a \emph{$\sigma$-expansion} of this structural rule is a structural rule of the form $\bigcup_{i \in I} \AtSeq{\sigma(\Gamma_{i} \seq \Delta_{i})} \vdash \Lambda \seq \Pi$ for $\Lambda \seq \Pi$ in $\AtSeq{\sigma(\Gamma \seq \Delta)}$.
\end{definition}

  For example, the left-hand version of the Limited Cut rule in Figure \ref{fig:specific-structural-rules} is a schema standing for a set of structural rules which contains the rule
\begin{prooftree}
\def\fCenter{\seq}
\Axiom$\emptyset \fCenter p$
\Axiom$p, r\fCenter s$
\BinaryInf$r \fCenter s$
\end{prooftree}
whose $\sigma$-expansion for $\sigma(p) = p \wedge q$ and $\sigma(r) = r$ and $\sigma(s) = s$ is the rule
\begin{prooftree}
\def\fCenter{\seq}
\Axiom$\emptyset \fCenter p$
\Axiom$\emptyset \fCenter q$
\Axiom$p, q, r\fCenter s$
\TrinaryInf$r \fCenter s$
\end{prooftree}
interpreted as
\begin{prooftree}
\def\fCenter{\seq}
\Axiom$\emptyset \fCenter \varphi$
\Axiom$\emptyset \fCenter \psi$
\Axiom$\varphi, \psi, \Gamma \fCenter \Delta$
\TrinaryInf$\Gamma \fCenter \Delta$
\end{prooftree}
  The following observations are now immediate.

\begin{proposition} \label{prop:expansions-valid}
  If a structural rule is valid in a super-Belnap Gentzen relation, then so are all of its expansions.
\end{proposition}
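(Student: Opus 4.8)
The plan is to thread a single chain of derivations through the super-Belnap Gentzen relation $\GentzenRelation \logic{L}$, using only its closure under substitution and Cut (transitivity) together with Lemma \ref{lemma:atomic-reduction} and the very definition of the operator $\AtSeqAux$. Fix a structural rule $\set{\Gamma_{i} \seq \Delta_{i}}{i \in I} \vdash \Gamma \seq \Delta$ valid in $\GentzenRelation \logic{L}$, a substitution $\sigma$, and a sequent $\Lambda \seq \Pi \in \AtSeq{\sigma(\Gamma \seq \Delta)}$. The goal is to show that the $\sigma$-expansion $\bigcup_{i \in I} \AtSeq{\sigma(\Gamma_{i} \seq \Delta_{i})} \vdash \Lambda \seq \Pi$ is again valid in $\GentzenRelation \logic{L}$.

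First I would recover the substituted instances of the premises from their atomic traces: by Lemma \ref{lemma:atomic-reduction}, each $\sigma(\Gamma_{i} \seq \Delta_{i})$ is introduction-derivable from $\AtSeq{\sigma(\Gamma_{i} \seq \Delta_{i})}$, hence derivable in $\GentzenRelation \logic{L}$ from $\bigcup_{i \in I} \AtSeq{\sigma(\Gamma_{i} \seq \Delta_{i})}$. Next, since the original rule is valid and every Gentzen relation is closed under substitution, $\sigma(\Gamma \seq \Delta)$ is derivable from $\set{\sigma(\Gamma_{i} \seq \Delta_{i})}{i \in I}$; composing this with the previous step by Cut yields a derivation of $\sigma(\Gamma \seq \Delta)$ from $\bigcup_{i \in I} \AtSeq{\sigma(\Gamma_{i} \seq \Delta_{i})}$. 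Finally, because $\Lambda \seq \Pi$ lies in $\AtSeq{\sigma(\Gamma \seq \Delta)}$, it is by definition elimination-derivable from $\sigma(\Gamma \seq \Delta)$, so one more application of Cut delivers a derivation of $\Lambda \seq \Pi$ from $\bigcup_{i \in I} \AtSeq{\sigma(\Gamma_{i} \seq \Delta_{i})}$, which is precisely the validity of the expansion.

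I expect no genuine obstacle here, which is why the paper flags the statement as immediate. The only point requiring care is that the argument is purely relational: it invokes the closure of $\GentzenRelation \logic{L}$ under substitution (to pass from the rule to its $\sigma$-instance) and under Cut (to splice the three derivations), but never uses the specific structural rules of the calculus beyond the logical rules already guaranteed by the inclusion of $\GB$. In particular, the substitution lemmas \ref{lemma:subst1} and \ref{lemma:subst2} are not needed here, since I work throughout with $\AtSeqAux$ applied directly to the substituted sequents $\sigma(\Gamma_{i} \seq \Delta_{i})$ and $\sigma(\Gamma \seq \Delta)$, rather than attempting to relate $\AtSeq{\sigma(\Gamma \seq \Delta)}$ to $\sigma[\AtSeq{\Gamma \seq \Delta}]$.
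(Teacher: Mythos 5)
Your proof is correct and is essentially the paper's own argument spelled out in detail: the paper's one-line proof appeals to the equivalence of $\Gamma \seq \Delta$ with $\AtSeq{\Gamma \seq \Delta}$ in $\GB$ (elimination-derivability by definition, introduction-derivability by Lemma \ref{lemma:atomic-reduction}), which is exactly the chain you splice together using structurality and cut. You are also right that Lemmas \ref{lemma:subst1} and \ref{lemma:subst2} are not needed here; they only enter in the proof of Proposition \ref{prop:sigma-expansion-property}.
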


\begin{proof}
  This claim follows immediately from the fact that $\AtSeq{\Gamma \seq \Delta}$ is equivalent in $\GB$ to $\Gamma \seq \Delta$ for each sequent $\Gamma \seq \Delta$ by Lemma \ref{lemma:atomic-reduction}.
\end{proof}

  We will in fact be interested in expansions of a particular kind. In the following definition, positive and negative occurrences of atoms are defined inductively as expected, e.g.\ the atom $p$ occurs negatively and the atom $q$ occurs positively in the formula $\dmneg p \vee q$.

\begin{definition}
  A formula is \emph{balanced} if each atom occurs only positively or only negatively in it. A substitution $\sigma$ is \emph{balanced} if the formula $\sigma(p)$ is balanced for each atom $p$. A substitution $\sigma$ is \emph{non-conflicting} if $\sigma(p)$ and $\sigma(q)$ do not share any variables for distinct atoms $p$ and $q$. A substitution~$\sigma$ is \emph{atomic} if $\sigma(p)$ is an atom for each atom $p$.
\end{definition}

\begin{lemma} \label{lemma:decomposition}
  Each substitution $\sigma$ is the composition $\sigma_{\text{sa}} \circ \sigma_{\text{bnc}}$ of a balanced non-conflicting substitution and a surjective atomic substitution.
\end{lemma}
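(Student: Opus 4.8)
The plan is to construct $\sigma_{\text{bnc}}$ by systematically renaming the atoms occurring inside each $\sigma(p)$, introducing a fresh atom for each combination of source atom, target atom, and polarity, and then to let $\sigma_{\text{sa}}$ be the atomic substitution that simply forgets this extra bookkeeping. Concretely, assuming as usual a countably infinite supply of atoms, I would first fix an injection $\iota$ from the set of triples $(p, r, \epsilon)$, where $p, r$ range over atoms and $\epsilon \in \{ +, - \}$, into the set of atoms; such an injection exists because the set of triples is again countably infinite. Write $v_{p, r, \epsilon}$ for $\iota(p, r, \epsilon)$.

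Next I would define $\sigma_{\text{bnc}}(p)$ to be the formula obtained from $\sigma(p)$ by replacing every positive occurrence of an atom $r$ with $v_{p, r, +}$ and every negative occurrence of $r$ with $v_{p, r, -}$, leaving the truth constants untouched. This is a well-defined operation on the formula tree, since the polarity of each leaf is determined by its position and is unaffected by relabelling the leaf. It is then immediate that $\sigma_{\text{bnc}}$ is balanced: for fixed $p$, the atom $v_{p, r, +}$ is introduced only at positive positions and $v_{p, r, -}$ only at negative positions, and distinct triples give distinct atoms by injectivity of $\iota$, so every fresh atom occurs with a single polarity. Likewise $\sigma_{\text{bnc}}$ is non-conflicting, because all atoms occurring in $\sigma_{\text{bnc}}(p)$ carry $p$ as their first index, and injectivity of $\iota$ guarantees these atom sets are pairwise disjoint for distinct $p$.

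Finally I would set $\sigma_{\text{sa}}(v_{p, r, \epsilon}) = r$ for every triple and let $\sigma_{\text{sa}}$ act as the identity on atoms outside the range of $\iota$; this is manifestly atomic. It is surjective, since for any atom $s$ we may pick any $p$ and obtain $\sigma_{\text{sa}}(v_{p, s, +}) = s$. The required identity $\sigma_{\text{sa}} \circ \sigma_{\text{bnc}} = \sigma$ then holds because $\sigma_{\text{bnc}}(p)$ has exactly the tree shape of $\sigma(p)$ with each atom-leaf $r$ replaced by some $v_{p, r, \epsilon}$, and applying $\sigma_{\text{sa}}$ sends each such leaf back to $r$, thereby reconstructing $\sigma(p)$.

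I do not expect a serious obstacle here, as the construction is essentially bookkeeping. The one point genuinely requiring care is the verification of balancedness: it rests on the observation that relabelling a leaf leaves its polarity unchanged, so that splitting each atom into a positive copy $v_{p, r, +}$ and a negative copy $v_{p, r, -}$ (indexed additionally by the source atom $p$) secures balancedness and non-conflictingness simultaneously while remaining invertible by a single surjective atomic substitution.
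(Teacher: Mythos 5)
Your proof is correct and takes essentially the same approach as the paper: the paper's families of injective atomic substitutions $\sigma_{p}$, $\gamma_{p}$ (with retractions $\tau_{p}$, $\delta_{p}$) amount precisely to your injection $\iota$ of triples $(p, r, \epsilon)$ into atoms — positive occurrences renamed via $\sigma_{p}$, negative ones via $\gamma_{p} \circ \sigma_{p}$ — and your forgetting map $\sigma_{\text{sa}}$ is the corresponding retraction. The only (cosmetic) difference is that you assume a countably infinite set of atoms, whereas the paper's argument is phrased for an atom set of arbitrary infinite cardinality $\kappa$; your construction adapts verbatim, since the set of triples still has cardinality $\kappa$.
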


\begin{proof}
  Let $\sigma_{p}$ and $\tau_{p}$ for each atom $p$ be atomic substitutions such that $(\tau_{p} \circ \sigma_{p})(q) = q$ for each atom $q$ and moreover the ranges of $\sigma_{p}$ and $\sigma_{q}$ are disjoint for distinct atoms $p$ and $q$. Let $\gamma_{p}$ and $\delta_{p}$ be atomic substitutions such that $(\delta_{p} \circ \gamma_{p})(q) = q$ for each $q$ and the ranges of $\gamma_{p}$ and $\gamma_{q}$ are distinct for distinct atoms $p$ and $q$ and moreover the ranges of $\gamma_{p}$ and $\sigma_{q}$ are disjoint for all atoms $p$~and~$q$. Suppose also that there are $\kappa$ atoms which do not lie in the range of any of the functions $\sigma_{p}$ or $\gamma_{p}$, where $\kappa$ is the cardinality of the set of all atoms. Such substitutions always exist, since each set of cardinality $\kappa$ may be decomposed into $\kappa$ disjoint subsets of cardinality~$\kappa$.

  Now given a substitution~$\sigma$ we define a non-conflicting substitution $\sigma_{\text{nc}}$ so that $\sigma_{\text{nc}}(p) = (\sigma_{p} \circ \sigma)(p)$. When then modify $\sigma_{\text{nc}}$ to obtain a balanced non-conflicting substitution $\sigma_{\text{bnc}}$ by changing each negative occurrence of a variable $q$ in $\sigma_{\text{nc}}(p)$ to $\gamma_{p}(q)$. The substitution $\sigma_{\text{sa}}$ may now be defined so that $\sigma_{\text{sa}}(q) = \tau_{p}(q)$ whenever $q$ is in the range of $\sigma_{p}$ and $\sigma_{\text{sa}}(q) = (\tau_{p} \circ \delta_{p})(q)$ whenever $q$ is in the (disjoint) range of $\gamma_{p}$. Moreover, we may define $\sigma_{\text{sa}}(q)$ for $q$ outside the ranges of these functions so that $\sigma_{\text{sa}}$ is a surjective atomic substitution.
\end{proof}

  A balanced (non-conflicting) expansion of a structural rule is defined as a $\sigma$-expansion of the rule for some balanced (non-conflicting) substitution~$\sigma$.

\begin{proposition} \label{prop:sigma-expansion-property}
  The set of all balanced non-conflicting expansions of a structural rule satisfies the expansion property.
\end{proposition}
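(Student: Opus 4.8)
The plan is to show that every instance of a balanced non-conflicting expansion admits a proof of its conclusion from its premises of the canonical tripartite shape: elimination rules at the top, a single layer of atomic instances of balanced non-conflicting expansions in the middle, and introduction rules at the bottom. Fix the structural rule $\set{\Gamma_{i} \seq \Delta_{i}}{i \in I} \vdash \Gamma \seq \Delta$ and let $R$ be the set of all its balanced non-conflicting expansions. A rule in $R$ is a $\sigma$-expansion $\bigcup_{i \in I} \AtSeq{\sigma(\Gamma_{i} \seq \Delta_{i})} \vdash \Lambda \seq \Pi$ with $\sigma$ balanced non-conflicting and $\Lambda \seq \Pi \in \AtSeq{\sigma(\Gamma \seq \Delta)}$, and a typical instance of it arises by applying some substitution $\tau$; writing $\theta = \tau \circ \sigma$, this instance has premises $\tau[\AtSeq{\sigma(\Gamma_{i} \seq \Delta_{i})}]$ and conclusion $\tau(\Lambda \seq \Pi)$.

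First I would climb upward using only elimination rules. By Lemma \ref{lemma:subst1}, applied with the substitution $\tau$ to each $\sigma(\Gamma_{i} \seq \Delta_{i})$, the set $\AtSeq{\theta(\Gamma_{i} \seq \Delta_{i})}$ is elimination-derivable from $\tau[\AtSeq{\sigma(\Gamma_{i} \seq \Delta_{i})}]$. Hence, starting from the instance premises and using only (logical) elimination rules, one reaches the atomic sequents $\bigcup_{i \in I} \AtSeq{\theta(\Gamma_{i} \seq \Delta_{i})}$.

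The heart of the argument is the middle layer, where Lemma \ref{lemma:decomposition} is brought to bear. Decompose $\theta = \theta_{\text{sa}} \circ \theta_{\text{bnc}}$ into a balanced non-conflicting $\theta_{\text{bnc}}$ and an atomic $\theta_{\text{sa}}$. Then the balanced non-conflicting expansions built from $\theta_{\text{bnc}}$ all lie in $R$, and instantiating such an expansion by the atomic substitution $\theta_{\text{sa}}$ yields, for each $\Lambda' \seq \Pi' \in \AtSeq{\theta_{\text{bnc}}(\Gamma \seq \Delta)}$, an atomic instance of a rule in $R$ with premises $\bigcup_{i} \theta_{\text{sa}}[\AtSeq{\theta_{\text{bnc}}(\Gamma_{i} \seq \Delta_{i})}]$ and conclusion $\theta_{\text{sa}}(\Lambda' \seq \Pi')$. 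Since $\theta_{\text{sa}}$ is atomic, all of these sequents are atomic; but an atomic sequent can be neither the conclusion of an introduction rule nor derived from atomic sequents by an elimination rule, so the derivabilities supplied by Lemmas \ref{lemma:subst1} and \ref{lemma:subst2} collapse to inclusions. Concretely, Lemma \ref{lemma:subst2} gives $\theta_{\text{sa}}[\AtSeq{\theta_{\text{bnc}}(\Gamma_{i} \seq \Delta_{i})}] \subseteq \AtSeq{\theta(\Gamma_{i} \seq \Delta_{i})}$, so the premises of each such atomic instance are already on hand from the previous step, while Lemma \ref{lemma:subst1} gives $\AtSeq{\theta(\Gamma \seq \Delta)} \subseteq \theta_{\text{sa}}[\AtSeq{\theta_{\text{bnc}}(\Gamma \seq \Delta)}]$, so every element of $\AtSeq{\theta(\Gamma \seq \Delta)}$ occurs as the conclusion $\theta_{\text{sa}}(\Lambda' \seq \Pi')$ of one of them. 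Applying one such atomic instance per element therefore derives all of $\AtSeq{\theta(\Gamma \seq \Delta)}$ from $\bigcup_{i} \AtSeq{\theta(\Gamma_{i} \seq \Delta_{i})}$.

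Finally I would descend by introduction rules. Because $\Lambda \seq \Pi \in \AtSeq{\sigma(\Gamma \seq \Delta)}$ it is elimination-derivable from $\sigma(\Gamma \seq \Delta)$, and since elimination-derivability is closed under substitution, $\tau(\Lambda \seq \Pi)$ is elimination-derivable from $\theta(\Gamma \seq \Delta)$; by transitivity every sequent in $\AtSeq{\tau(\Lambda \seq \Pi)}$ lies in $\AtSeq{\theta(\Gamma \seq \Delta)}$, so the middle layer in particular furnishes all of $\AtSeq{\tau(\Lambda \seq \Pi)}$. By Lemma \ref{lemma:atomic-reduction} the conclusion $\tau(\Lambda \seq \Pi)$ is introduction-derivable from $\AtSeq{\tau(\Lambda \seq \Pi)}$, completing a proof from the instance premises that uses only logical rules and atomic instances of rules in $R$. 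The step I expect to require the most care is the middle layer: matching the premises and conclusions of the atomic instances of the $\theta_{\text{bnc}}$-expansion to the atomic sequents already derived. This hinges on the decomposition of Lemma \ref{lemma:decomposition} placing the inner substitution $\theta_{\text{bnc}}$ inside the class of balanced non-conflicting substitutions (so that the required expansion genuinely belongs to $R$), together with the collapse of introduction- and elimination-derivability to plain set inclusion among atomic sequents.
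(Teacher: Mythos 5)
Your proof is correct and takes essentially the same route as the paper's: the same tripartite shape (eliminations, then atomic instances of rules in $R$, then introductions), assembled from the same ingredients, namely Lemmas~\ref{lemma:decomposition}, \ref{lemma:subst1}, \ref{lemma:subst2}, and \ref{lemma:atomic-reduction}. The only genuine variations are that you apply Lemma~\ref{lemma:decomposition} to the composite $\tau \circ \sigma$ where the paper decomposes the instantiating substitution $\tau$ alone (your choice spares you the paper's side observation that $\tau_{\text{bnc}} \circ \sigma$ is again balanced and non-conflicting, i.e.\ that such substitutions compose), and that you glue the three layers together by collapsing introduction- and elimination-derivability among atomic sequents to plain set inclusion, where the paper instead pushes the atomic substitution through the relations $\vdash_{\text{elim}}$, $\vdash_{\text{at}}$, and $\vdash_{\text{intro}}$ by structurality.
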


\begin{proof}
  Let $\set{ \Gamma_{i} \seq \Delta_{i} }{i \in I} \vdash \Gamma \seq \Delta$ be a given structural rule. Let us write $\SequentSet \vdash_{\text{elim}} \SequentSet'$, $\SequentSet \vdash_{\text{intro}} \SequentSet'$, and $\SequentSet \vdash_{\text{at}} \SequentSet'$ to abbreviate the claims that each sequent in $\SequentSet'$ has a proof from $\SequentSet$ which uses, respectively, only the elimination rules, only the introduction rules, and only atomic instances of balanced non-conflicting expansions of the given rule.

  We are to show that $\bigcup_{i \in I} \tau[\AtSeq{\sigma(\Gamma_{i} \seq \Delta_{i})}] \vdash_{\text{at}} \tau[\AtSeq{\sigma(\Gamma \seq \Delta)}]$ for each instance given by $\tau$ of each $\sigma$-expansion of the given rule, where $\sigma$ is a balanced non-conflicting expansion.

  By Lemma~\ref{lemma:decomposition} there is a balanced non-conflicting substitution $\tau_{\text{bnc}}$ and a surjective atomic substitution $\tau_{\text{sa}}$ such that $\tau = \tau_{\text{sa}} \circ \tau_{\text{nbc}}$. Observe that $\tau_{\text{bnc}} \circ \sigma$ is also a balanced non-conflicting expansion.

  We have $(\tau_{\text{sa}}) \circ (\tau_{\text{nbc}})[\AtSeq{\sigma(\Gamma_{i} \seq \Delta_{i})}] \vdash_{\text{elim}} \tau_{\text{sa}} [\AtSeq{(\tau_{\text{nbc}} \circ \sigma)(\Gamma_{i} \seq \Delta_{i})}]$ by Lemma~\ref{lemma:subst1}. Also $\bigcup_{i \in I} \AtSeq{(\tau_{\text{bnc}} \circ \sigma)(\Gamma_{i} \seq \Delta_{i})} \vdash_{\text{at}}\AtSeq{(\tau_{\text{bnc}} \circ \sigma)(\Gamma \seq \Delta)}$. Applying~$\tau_{\text{sa}}$ to this consequence preserves the relation~$\vdash_{\text{at}}$. Lemma~\ref{lemma:subst2} now yields that $\tau_{\text{sa}}[\AtSeq{(\tau_{\text{bnc}} \circ \sigma)(\Gamma \seq \Delta)}] \vdash_{\text{intro}} (\tau_{\text{sa}} \circ \tau_{\text{nbc}}) [\AtSeq{\sigma(\Gamma \seq \Delta)}]$ and $\tau_{\text{sa}} \circ \tau_{\text{nbc}} = \tau$.
\end{proof}

  A trivial way of ensuring that a super-Belnap calculus satisfies the expansion property is therefore to add all balanced non-conflicting expansions of all structural rules to the calculus by fiat. Obtaining a reasonable description of the resulting set of rules is the only (optional) part of the transformation procedure from a Hilbert axiomatization to a well-behaved Gentzen calculus which is specific to each individual super-Belnap logic.

  Finally, recall that an important feature of cut-free proofs in the standard Gentzen calculus for classical logic is the subformula property. This property is useful when trying to reduce the infinite space of all possible proofs of a given sequent to a finite one (in a finitary calculus).

\begin{definition}
  A proof has the \emph{subformula property} if each formula of the proof is a subformula of some formula either in the premises or in the conclusion.
\end{definition}

  It is not the case that each structurally atomic analytic--synthetic proof has the subformula property. For example, we may apply Weakening by a variable $p$ which occurs neither in the premises nor in the conclusion to each side of an atomic sequent $\Gamma \seq \Delta$, and then Cut on $p$. However, it is easy to transform such a proof into one which has the subformula property. In the following proposition, by a \emph{non-trivial} super-Belnap calculus we mean a super-Belnap calculus for a non-trivial logic.

\begin{proposition} \label{prop:subformula property}
  If a sequent has a structurally atomic analytic--synthetic proof from $\SequentSet$ in a non-trivial super-Belnap calculus, then it has a structurally atomic analytic--synthetic proof from $\SequentSet$ with the subformula property.
\end{proposition}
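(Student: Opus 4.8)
The plan is to exploit the tripartite structure of structurally atomic analytic--synthetic proofs: reading each branch from its leaves towards the root one meets first a segment of elimination rules, then a segment of atomic structural rules, and finally a segment of introduction rules. First I would observe that the two outer segments automatically respect the subformula property. Every formula in the elimination segment topping a branch is a subformula of the premise from $\SequentSet$ (or of the axiom) at the top of that branch, since elimination rules only pass from a formula to its immediate subformulas; dually, every formula in the introduction segment is a subformula of the conclusion, as introduction rules only combine formulas already present into larger ones that persist down to the root. In particular the two interfaces --- the atomic sequents where the structural segment meets the elimination segment above and the introduction segment below --- contain only atoms that are subformulas of $\SequentSet$ (the top interface, by Lemma~\ref{lemma:elimination-derivable}) respectively subformulas of the conclusion (the bottom interface, since such an atom is carried by the introduction rules into the conclusion). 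Moreover the two axioms contribute only $\top$ on the right and $\bot$ on the left, and these, not being eliminable, are necessarily absorbed by introduction rules into subformulas of the conclusion. Consequently every violation of the subformula property is a propositional variable occurring strictly inside the middle structural segment, which consists entirely of atomic sequents; call such a variable \emph{alien}.

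The main step is then to rename away the alien variables. Assume first that some variable $a$ occurs in $\SequentSet$ or in the conclusion, and let $\sigma$ be the substitution fixing every variable that occurs in $\SequentSet$ or the conclusion and sending every alien variable to $a$. I would apply $\sigma$ to the middle structural segment alone, leaving the elimination and introduction segments untouched. Since the structural rules are schematic, $\sigma$ carries each instance of a structural rule to another instance of the same rule, so the transformed segment is again a legitimate derivation; since $\sigma$ maps atoms to atoms it preserves structural atomicity; and since both interfaces contain only variables fixed by $\sigma$, the transformed segment still glues onto the unchanged outer segments. The tripartite shape, and hence analyticity--syntheticity (via Lemma~\ref{lemma:immediately-follows}), is untouched, and now every formula of the proof is a subformula of $\SequentSet$ or of the conclusion, as required.

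The delicate point, which I expect to be the main obstacle, is the degenerate case in which neither $\SequentSet$ nor the conclusion contains any variable, so that no target $a$ for the renaming exists; it is precisely here that non-triviality is needed. In this case both interfaces consist solely of empty sequents, so every alien variable is created and consumed entirely within the structural segment, and renaming alone cannot excise it without destroying structural atomicity. The plan is to substitute a truth constant for the alien material --- $\top$ for occurrences pushed in on the right and $\bot$ for occurrences on the left --- and then to reduce the resulting constant-instances of the structural rules to atomic form using the constant elimination and introduction rules (which merely delete $\top$ on the left and $\bot$ on the right), so that no fresh variable is reintroduced. The reason this can be made to work is that the alien material in the structural segment ultimately serves only to derive an empty sequent feeding the introduction part: were such a derivation unavoidable after the aliens are removed, the empty sequent would be derivable from the empty set of premises, contradicting non-triviality. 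Reconciling this excision with the constraints of structural atomicity and analyticity--syntheticity in the variable-free case --- in particular dealing uniformly with an alien variable that a Cut forces to appear on both sides --- is the crux of the argument, whereas the generic case treated above is a routine appeal to structurality.
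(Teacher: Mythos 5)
Your main case is correct and is essentially the paper's own argument: an atom occurring neither in $\SequentSet$ nor in the conclusion can occur only in atomic sequents of the structural segment, only structural rules can apply to sequents containing it, and renaming all such atoms to an atom occurring in $\SequentSet$ or in the conclusion sends every atomic instance of a schematic structural rule to another such instance while fixing both interfaces, so the proof survives. The genuine gap is the degenerate, variable-free case, which you yourself leave unresolved, and the mechanism you propose for it does not work. Replacing right occurrences of an alien atom by $\top$ and left occurrences by $\bot$ destroys precisely those rule instances in which the atom occurs on both sides --- the two premises of a Cut-like rule then no longer share a cut formula, so the result is not an instance of any rule of the calculus --- and this is the typical way alien atoms get consumed, not a corner case. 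Moreover, your fallback step of ``reducing the resulting constant-instances of the structural rules to atomic form'' is an expansion-property claim that is simply not available: the proposition is asserted for \emph{every} non-trivial super-Belnap calculus, and the paper itself exhibits a specific structural rule (Explosive Cut) whose non-atomic instances are not derivable in $\GKforB$ from its atomic instances together with the logical rules. Finally, the sentence invoking non-triviality is not an argument as it stands.

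The paper closes the degenerate case by abandoning surgery on the given proof and re-deriving the conclusion from scratch. Every constant premise is equivalent in $\GB$ either to the empty sequent or to the empty set of sequents. If some premise is of the first kind, the empty sequent is obtained from it by elimination rules alone, and the conclusion is then rebuilt from the empty sequent by atomic Weakening followed by introduction rules; this proof trivially has the subformula property. If all premises are valid in $\GB$, then the constant conclusion is a theorem of the logic, and since every non-trivial super-Belnap logic has the same constant theorems as $\B$, it is provable from $\emptyset$ already in $\GKforB$; Propositions \ref{prop:analytic--synthetic}, \ref{prop:expansion-property} and \ref{prop:structurally-atomic-specific} then yield a structurally atomic analytic--synthetic proof from $\emptyset$ which uses only introduction rules and common structural rules, hence has the subformula property. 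For what it is worth, your non-triviality instinct can be made to work, but along different lines: in the degenerate case any sequent passed out of the structural segment must be the empty sequent, and if its subderivation used no empty sequent obtained from a premise by eliminations, the calculus would prove $\emptyset \seq \emptyset$ from no premises and hence be trivial; one may therefore graft the rest of the proof directly onto a premise-derived empty sequent and discard the alien material outright. It is this collapse, not a $\top$/$\bot$ substitution, that exploits non-triviality.
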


\begin{proof}
  Suppose first that the premises do not contain any atom (as a subformula). Each such premise is easily seen to be equi\-valent in $\GB$ either to the empty sequent or to the empty set of sequents. In the former case, the empty sequent (from which every other sequent is provable using only the atomic form of Weakening followed by introduction rules) is provable from the constant sequent using only elimination rules. If each premise falls under the latter case, then the conclusion is provable from an empty set of premises. Moreover, each non-trivial super-Belnap logic has the same set of constant theorems as $\B$. Therefore if the conclusion does not contain any atom, then it is in fact derivable from an empty set of premises in $\GKforB$. By Propositions \ref{prop:analytic--synthetic}, \ref{prop:expansion-property}, and \ref{prop:structurally-atomic-specific} it has a structurally atomic analytic--synthetic proof from $\emptyset$ in $\GKforB$. But such a proof only uses introduction rules and the common structural rules, therefore it has the subformula property.

 Otherwise, let $q$ be an atom which occurs either in the premises of in the conclusion of the proof. Observe that each atom $p$ which occurs neither in the premises nor in the conclusion of a structurally atomic analytic--synthetic proof may only occur in atomic sequents, and the only rules which may apply to sequents containing $p$ are structural ones. Replacing all occurrences of $p$ by~$q$ now yields a structurally atomic analytic--synthetic proof which moreover has the subformula property.
\end{proof}

  Putting the results of this section together yields the following theorem.

\begin{theorem} \label{thm:normal-form}
  If a sequent has a proof from a set of sequents $\SequentSet$ in a super-Belnap calculus which satisfies the expansion property, then it has a structurally atomic analytic--synthetic proof from $\SequentSet$ with the subformula property.
\end{theorem}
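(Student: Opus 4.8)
The plan is to obtain the desired proof by composing, in sequence, the three main results of this section. Suppose the sequent $\Gamma \seq \Delta$ has a proof from a set of sequents $\SequentSet$ in a super-Belnap calculus satisfying the expansion property.

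First I would invoke the left-to-right direction of Proposition~\ref{prop:expansion-property}: because the structural rules of the calculus satisfy the expansion property, each non-atomic instance of a structural rule in the given proof may be replaced by a structurally atomic proof, yielding a structurally atomic proof of $\Gamma \seq \Delta$ from $\SequentSet$. Next I would feed this structurally atomic proof into Proposition~\ref{prop:analytic--synthetic}, which transforms any structurally atomic proof into one that is moreover analytic--synthetic while preserving structural atomicity. At this point we have a structurally atomic analytic--synthetic proof of $\Gamma \seq \Delta$ from $\SequentSet$.

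Finally I would apply Proposition~\ref{prop:subformula property} to upgrade this to a structurally atomic analytic--synthetic proof enjoying the subformula property. Since each of the three transformation steps preserves the output of its predecessor, their composition delivers exactly the proof asserted by the theorem, and the argument amounts to little more than bookkeeping.

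The only delicate point, and the one I would flag as the main (if minor) obstacle, is that Proposition~\ref{prop:subformula property} is stated only for \emph{non-trivial} super-Belnap calculi, since its proof relies on the fact that every non-trivial super-Belnap logic shares the constant theorems of $\B$. I would therefore dispose of the trivial case separately: in a trivial calculus every sequent is already provable from the empty set of premises, so a proof of $\Gamma \seq \Delta$ meeting all three requirements can be manufactured directly without appeal to $\SequentSet$. With this caveat handled, the chaining of the three propositions yields the theorem in full generality.
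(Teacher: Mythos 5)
Your three-step chaining is precisely the paper's own proof: the paper proves this theorem in one line by citing Propositions \ref{prop:analytic--synthetic}, \ref{prop:expansion-property}, and \ref{prop:subformula property} in exactly the roles you assign them. Moreover, your instinct to flag the non-triviality hypothesis of Proposition \ref{prop:subformula property} is sound; the paper applies that proposition without comment, so on this point you are being more careful than the paper itself.

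However, your patch for the trivial case is where the argument breaks. It is not true that in a trivial calculus satisfying the expansion property a proof meeting all three requirements can be ``manufactured directly''; in fact the theorem as stated \emph{fails} for some such calculi, so no patch can work. Consider the super-Belnap calculus $\mathbf{C}$ obtained by adding to $\GKforB$ the rule of Identity together with the structural rule $\rho$ given by $p \seq p \vdash \emptyset \seq \emptyset$. This calculus is trivial (Identity, then $\rho$, then Weakening derive every sequent from $\emptyset$), and it satisfies the expansion property: Identity does so by Proposition \ref{prop:structurally-atomic-specific}, and every instance of $\rho$ has conclusion $\emptyset \seq \emptyset$, which is derivable from no premises at all by one atomic instance of Identity followed by one atomic instance of $\rho$. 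Now take $\SequentSet = \emptyset$ and the sequent $\emptyset \seq \emptyset$. It is provable from $\emptyset$ in $\mathbf{C}$, but it has \emph{no} proof with the subformula property, structurally atomic or not: such a proof could contain no formulas whatsoever, yet every rule of $\mathbf{C}$ whose conclusion is the empty sequent --- the instances of $\rho$, the rule $\top, \Gamma \seq \Delta \vdash \Gamma \seq \Delta$, and the rule $\Gamma \seq \Delta, \bot \vdash \Gamma \seq \Delta$ --- has a formula occurring in its premise. So the correct repair is not to handle the trivial case separately but to read the theorem with the non-triviality assumption of Proposition \ref{prop:subformula property} built into its hypothesis (which is evidently the paper's intent). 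With that reading, your argument coincides with the paper's and is complete.
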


\begin{proof}
  This folows from Propositions \ref{prop:analytic--synthetic}, \ref{prop:expansion-property}, and \ref{prop:subformula property}.
\end{proof}

\section{Interpolation in super-Belnap logics}
\label{sec:interpolation}

  We now apply the results proved in the previous section to obtain some interpolation theorems for super-Belnap logics. We say that a logic has the \emph{(simple) Craig interpolation property}, or briefly \emph{has interpolation}, if $\varphi \vdash_{\logic{L}} \psi$ implies the existence of a formula $\chi$ called the \emph{interpolant} of $\varphi$ and $\psi$ such that $\varphi \vdash_{\logic{L}} \chi$ and $\chi \vdash_{\logic{L}} \varphi$ and each atom which occurs in $\chi$ occurs in both $\varphi$ and $\psi$. We prove by a simple argument that the logics $\B$, $\K$, $\ETL$ and some others enjoy interpolation, in fact in a somewhat stronger form.

  Let $\logic{L}$, $\logic{L}_{1}$, and $\logic{L}_{2}$ be logics such that $\logic{L}_{1}, \logic{L}_{2} \subseteq \logic{L}$. We say that $\logic{L}$ enjoys \emph{$(\logic{L}_{1}, \logic{L}_{2})$-interpolation} if $\varphi \vdash_{\logic{L}} \psi$ implies the existence of an interpolant $\chi$ such that $\varphi \vdash_{\logic{L}_{1}} \chi$ and $\chi \vdash_{\logic{L}_{2}} \psi$ and each atom which occurs in $\chi$ occurs in both $\varphi$ and $\psi$. In that case $\logic{L} = \logic{L}_{1} \vee \logic{L}_{2}$. Clearly $(\logic{L},\logic{L})$-interpolation for $\logic{L}$ amounts precisely to interpolation for $\logic{L}$.

\begin{proposition} \label{prop:exp-interpolation}
  Let $\logic{L}$ be an extension of a logic $\logic{L}_{0}$ such that $\bot \vdash_{\logic{L}_{0}} p$ for some constant formula $\bot$. If $\logic{L}$ has inter\-polation or $(\logic{L}, \logic{L}_{0})$-interpolation, then so do all of its explosive extensions.
\end{proposition}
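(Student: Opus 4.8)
The plan is to reduce everything to the disjunctive description of explosive consequence recorded in Section~\ref{sec:preliminaries}: if $\logic{L}_{exp}$ is an explosive extension of $\logic{L}$, then $\varphi \vdash_{\logic{L}_{exp}} \psi$ holds if and only if either $\varphi \vdash_{\logic{L}} \psi$ or $\varphi \vdash_{\logic{L}_{exp}} \emptyset$. Both stated forms of interpolation will then be handled by the same case split on these two disjuncts: in the first disjunct I simply reuse the interpolant supplied by $\logic{L}$, and in the second I take the constant $\bot$ itself to be the interpolant.

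Consider first the hypothesis that $\logic{L}$ has interpolation, and let $\varphi \vdash_{\logic{L}_{exp}} \psi$. If $\varphi \vdash_{\logic{L}} \psi$, interpolation for $\logic{L}$ yields some $\chi$ with $\varphi \vdash_{\logic{L}} \chi$, $\chi \vdash_{\logic{L}} \psi$, and all atoms of $\chi$ occurring in both $\varphi$ and $\psi$; since $\logic{L} \subseteq \logic{L}_{exp}$, the same $\chi$ witnesses interpolation in $\logic{L}_{exp}$. Otherwise $\varphi \vdash_{\logic{L}_{exp}} \emptyset$, that is $\varphi \vdash_{\logic{L}_{exp}} \bot$, and I would take $\chi = \bot$. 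The left half $\varphi \vdash_{\logic{L}_{exp}} \bot$ is exactly the disjunct we are in, while the right half follows because $\bot \vdash_{\logic{L}_{0}} p$ together with structurality (substitute $\psi$ for $p$; the constant $\bot$ is unaffected) gives $\bot \vdash_{\logic{L}_{0}} \psi$, whence $\bot \vdash_{\logic{L}_{exp}} \psi$ using $\logic{L}_{0} \subseteq \logic{L}_{exp}$. The atom condition is vacuous, $\bot$ being a constant.

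The hypothesis of $(\logic{L}, \logic{L}_{0})$-interpolation is treated identically, the target now being $(\logic{L}_{exp}, \logic{L}_{0})$-interpolation for $\logic{L}_{exp}$; this is well-posed because $\logic{L}_{0} \subseteq \logic{L} \subseteq \logic{L}_{exp}$. In the first disjunct the hypothesis gives $\chi$ with $\varphi \vdash_{\logic{L}} \chi$ and $\chi \vdash_{\logic{L}_{0}} \psi$, the former lifting to $\logic{L}_{exp}$ and the latter already lying in $\logic{L}_{0}$. In the explosive disjunct I again put $\chi = \bot$, with $\varphi \vdash_{\logic{L}_{exp}} \bot$ as the $\logic{L}_{exp}$-side and $\bot \vdash_{\logic{L}_{0}} \psi$ as the $\logic{L}_{0}$-side, both already established.

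The single point that needs care is the explosive case: the left entailment is free from the definition of explosion, but the right entailment must be produced in the weaker logic $\logic{L}_{0}$ rather than merely in $\logic{L}_{exp}$, so that it fits the $(\logic{L}_{exp}, \logic{L}_{0})$-split. This is precisely what the standing hypothesis $\bot \vdash_{\logic{L}_{0}} p$, combined with structurality, is there to guarantee; everything else is an immediate appeal to $\logic{L} \subseteq \logic{L}_{exp}$ and to the disjunctive characterisation of explosive consequence.
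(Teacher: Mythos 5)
Your proof is correct and follows exactly the paper's own argument: the case split via the disjunctive characterisation of explosive consequence, reusing the $\logic{L}$-interpolant in the non-explosive case and taking $\bot$ as interpolant in the explosive case. You have merely spelled out details the paper leaves implicit (the appeal to structurality for $\bot \vdash_{\logic{L}_{0}} \psi$ and the vacuity of the atom condition for a constant), which is harmless.
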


\begin{proof}
  Let $\logic{L}_{exp}$ be an explosive extension of $\logic{L}$. If $\varphi \vdash_{\logic{L}_{exp}} \psi$, then either $\varphi \vdash_{\logic{L}} \psi$ or $\varphi \vdash_{\logic{L}_{exp}} \emptyset$. In the former case the existence of the interpolant is guaranteed by the assumption that $\logic{L}$ has interpolation or $(\logic{L}, \logic{L}_{0})$-interpolation, in the latter case we may take $\bot$ as the interpolant.
\end{proof}

  Let us now review what is known about interpolation in super-Belnap logics. Interpolation for the Dunn--Belnap logic $\B$ was proved early on by Anderson and Belnap \cite[p.~161]{anderson+belnap75}. Interpolation for the strong three-valued Kleene logic $\K$ was proved using different techniques by Bendov\'{a} \cite{bendova05} and Milne \cite{milne16}.\footnote{Both of the papers \cite{bendova05} and \cite{milne16} in fact consider the fragment of $\K$ without the truth constants $\top$ and $\bot$, therefore they have to formulate the interpolation property with more care. However, ordinary interpolation for $\K$ is a straightforward consequence of their interpolation results. We consider this to be yet another reason to include the truth constants in the signature of super-Belnap logics. Likewise, Anderson and Belnap in fact consider the corresponding fragment of $\B$, although in their case no adjustments are needed in the definition of the interpolation property.} In the same paper, Milne also observed that interpolation for the Logic of Paradox $\LP$ is equi\-valent to interpolation for $\K$ in view of Lemma~\ref{lemma:contraposition}\,(ii), which he calls the Duality Principle. Moreover, he proved that classical logic $\CL$ enjoys $(\K, \LP)$-interpolation. Finally, Bendov\'{a} observed in her paper that the logic $\Kleq$ does not enjoy interpolation: the rule $(p \wedge \dmneg p) \vee r \vdash (q \vee \dmneg q) \vee r$ is valid in $\Kleq$ but lacks an interpolant. The same example shows that $\Kleq \vee \ECQ$ does not enjoy interpolation, either. As far as we are aware, this exhausts the present state of knowledge about inter\-polation in super-Belnap logics. To the best of our knowledge, interpolation has so far not been studied in other super-Belnap logics, which have only been introduced very recently.

\begin{lemma} \label{lemma:contraposition}
  The following equivalences hold:
\begin{itemize}
\item[(i)] $\varphi \vdash_{\B} \psi$ if and only if $\dmneg \psi \vdash_{\B} \dmneg \varphi$.
\item[(ii)] $\varphi \vdash_{\LP} \psi$ if and only if $\dmneg \psi \vdash_{\K} \dmneg \varphi$.
\end{itemize}
\end{lemma}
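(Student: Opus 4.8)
The plan is to prove both equivalences purely semantically, directly from the matrix definition of $\vdash_{\logic{L}}$, exploiting that the De Morgan negation $\dmneg$ is interpreted by an involution that commutes with every valuation, so that $v(\dmneg\chi) = \dmneg v(\chi)$ for all $v$ and $\chi$. In each case the crux is to contrapose the implication defining matrix consequence and then transport the resulting condition back across $\dmneg$.

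For (i) I would use the order characterization of $\B$: that $\varphi \vdash_{\B} \psi$ holds if and only if $v(\varphi) \le v(\psi)$ in the lattice (truth) order of $\Bmatrix$ for every valuation $v \colon \FmAlgebra \to \Bmatrix$. Granting this, the equivalence is immediate, since $\dmneg$ is an order-reversing involution on $\Bmatrix$: we have $v(\varphi) \le v(\psi)$ if and only if $\dmneg v(\psi) \le \dmneg v(\varphi)$, i.e.\ $v(\dmneg\psi) \le v(\dmneg\varphi)$, which by the same characterization says exactly $\dmneg\psi \vdash_{\B} \dmneg\varphi$. Note that the contraposition trick used below for (ii) is \emph{not} available here, because in $\Bmatrix$ the designated set is not the negation-complement of itself: the glut is designated and fixed by $\dmneg$. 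The one substantive input, and the step I expect to be the main obstacle, is therefore the order characterization itself. Its easy direction holds because the designated set of $\Bmatrix$ is up-closed in the truth order, so $v(\varphi) \le v(\psi)$ for all $v$ forces consequence; for the converse I would argue through prime filters, using that the designated set of $\Bmatrix$ is itself a prime lattice filter and that the only other prime filter is its image under the unique nontrivial automorphism of $\Bmatrix$ (the one swapping the two middle elements). Hence if $v_{0}(\varphi) \not\le v_{0}(\psi)$, a separating prime filter yields, after composing $v_{0}$ with that automorphism if necessary, a valuation sending $\varphi$ into the designated set and $\psi$ outside it, witnessing $\varphi \not\vdash_{\B} \psi$. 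This completeness fact is standard (it expresses that $\B$ is the order logic of De Morgan lattices), so in the final write-up it may simply be cited.

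For (ii) I would first observe that $\LPmatrix$ and $\Kmatrix$ have the very same De Morgan algebra reduct $\algebra{A}$ — the three-element chain whose negation fixes the middle element and swaps the two endpoints — and differ only in their designated sets: writing $D_{\LP}$ and $D_{\K}$ for these, $D_{\LP}$ consists of the top two elements while $D_{\K}$ consists of the top element alone. A one-line check on the three elements of $\algebra{A}$ then gives the key duality $x \in D_{\LP}$ if and only if $\dmneg x \notin D_{\K}$. Now $\varphi \vdash_{\LP} \psi$ means that $v(\varphi) \in D_{\LP}$ implies $v(\psi) \in D_{\LP}$ for every valuation $v \colon \FmAlgebra \to \algebra{A}$; contraposing the inner implication turns this into the statement that $v(\psi) \notin D_{\LP}$ implies $v(\varphi) \notin D_{\LP}$, and applying the duality together with $v(\dmneg\chi) = \dmneg v(\chi)$ rewrites it as: $v(\dmneg\psi) \in D_{\K}$ implies $v(\dmneg\varphi) \in D_{\K}$ for every $v$, which is precisely $\dmneg\psi \vdash_{\K} \dmneg\varphi$. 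Since every step is an equivalence, (ii) follows, and here no completeness input beyond the matrix definition is needed.
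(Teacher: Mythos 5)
Your proof is correct, but it is genuinely different from the paper's, for the simple reason that the paper offers no argument at all: its proof of this lemma is a single line declaring (i) to be well known and attributing (ii) to Milne. Your treatment of (ii) --- observing that $\LPmatrix$ and $\Kmatrix$ share their algebra reduct, checking the duality $x \in D_{\LP}$ iff $\dmneg x \notin D_{\K}$ on the three elements, and contraposing the inner implication --- is exactly the elementary verification that the citation stands in for, and it uses nothing beyond the paper's definitions $\LP = \Log \LPmatrix$ and $\K = \Log \Kmatrix$. Your treatment of (i) is also sound, including the prime-filter justification of the order characterization of $\B$: the lattice reduct of $\Bmatrix$ is distributive, and its only prime filters are the designated set and its image under the unique nontrivial automorphism, which swaps the two middle elements. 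One remark, though: the detour through that completeness fact is avoidable, because the contraposition trick you declare unavailable for (i) does work once it is supplemented by the very automorphism you invoke. Contraposing the defining condition of $\dmneg \psi \vdash_{\B} \dmneg \varphi$ shows it equivalent to the preservation, under every valuation, of membership in the \emph{other} prime filter of $\Bmatrix$; composing valuations with the automorphism --- a bijection on valuations that carries this filter onto the designated set --- turns that condition into the defining condition of $\varphi \vdash_{\B} \psi$. This renders (i) as self-contained and elementary as your (ii), with no appeal to the completeness of $\B$ with respect to the order of De Morgan lattices.
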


\begin{proof}
  The first equivalence is well known and the second was observed by Milne \cite{milne16}.
\end{proof}

\begin{lemma}[\cite{prenosil16}] \label{lemma:lp-cap-ecq}
  If $\logic{L}$ is a proper extension of $\B$, then $p, \dmneg p \vdash_{\logic{L}} q \vee \dmneg q$.
\end{lemma}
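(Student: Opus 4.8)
The plan is to argue semantically, exploiting that every super-Belnap logic $\logic{L}$ is the logic $\Log\class{K}$ of its class $\class{K}$ of reduced models, each of which is a reduced model of $\B$: a matrix $\langle\algebra{A},F\rangle$ with $\algebra{A}$ a De Morgan algebra and $F$ a lattice filter (upward closed and closed under $\wedge$, since $F$ must respect the conjunction rules of $\B$), where moreover $\top\in F$ and $\bot\notin F$ in every non-trivial model (recall $\vdash_{\B}\top$ and $\bot\vdash_{\B}p$). Since $p,\dmneg p$ and $p\wedge\dmneg p$ are interderivable in $\B$, the target rule is equivalent to $p\wedge\dmneg p\vdash q\vee\dmneg q$, and it suffices to prove this form. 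I would prove the contrapositive: \emph{if $p\wedge\dmneg p\nvdash_{\logic{L}}q\vee\dmneg q$, then $\logic{L}=\B$}, so that properness of $\B\subsetneq\logic{L}$ forces the rule to hold.

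Suppose then that the rule fails in $\logic{L}$. Then it fails in some reduced model $\langle\algebra{A},F\rangle\in\class{K}$, yielding $a,b\in\algebra{A}$ with $a\wedge\dmneg a\in F$ while $b\vee\dmneg b\notin F$. Put $\beta:=a\wedge\dmneg a$ and $\nu:=b\vee\dmneg b$. Then $\beta\in F$ and $\dmneg\beta=\dmneg a\vee a\geq\beta$, so by upward closure $\dmneg\beta\in F$ as well; dually $\nu\notin F$ and $\dmneg\nu=\dmneg b\wedge b\leq\nu$, so $\dmneg\nu\notin F$. Thus $\beta\leq\dmneg\beta$ is a designated ``glut'' interval and $\dmneg\nu\leq\nu$ an undesignated ``gap'' interval, and together with the constants $\top\in F$ and $\bot\notin F$ these elements are the raw material from which I would reconstruct the four Belnap values.

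The main step, and the main obstacle, is to show that $\Bmatrix$ is isomorphic to a quotient of a submatrix of $\langle\algebra{A},F\rangle$; this gives $\Log\langle\algebra{A},F\rangle\subseteq\Log\Bmatrix=\B$ and hence $\logic{L}\subseteq\B$, the desired contradiction. Concretely, I would pass to the De Morgan subalgebra generated by $\beta$ and $\nu$ with the induced filter, and then collapse it by a suitable congruence to a four-element matrix whose elements map to $\mathbf{T},\mathbf{F},\mathbf{B},\mathbf{N}$. The delicate points are: manufacturing a designated negation fixpoint from the glut interval (some congruence identifying $\beta$ with $\dmneg\beta$ is needed, since in general $\beta<\dmneg\beta$) and dually an undesignated fixpoint from the gap interval; arranging that the images $\bar\beta,\bar\nu$ satisfy $\bar\beta\wedge\bar\nu=\bot$ and $\bar\beta\vee\bar\nu=\top$ so that the quotient is exactly $\Bmatrix$ and not a proper reduct of it; and, crucially, verifying that the collapsing relation is a genuine congruence compatible with $F$ which does not identify any designated element with an undesignated one. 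This compatibility check is where the real work lies.

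Finally, one half of the argument can be shortened using the observation (made in the proof of Theorem~\ref{thm:cut-always-admissible}) that every super-Belnap logic is either below $\K$ or above $\LP$. The case $\logic{L}\supseteq\LP$ is immediate, since $\LP$ already validates the rule: in $\LPmatrix$ the premises $p,\dmneg p$ are jointly designated only at the glut, where $q\vee\dmneg q$ is designated for every value of $q$. The substantive content is therefore the case $\B\subsetneq\logic{L}\subseteq\K$, which the contraposition argument above settles directly, and which could alternatively be reduced to the $\LP$ case through the negation duality of Lemma~\ref{lemma:contraposition}.
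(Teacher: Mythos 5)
The paper does not actually prove this lemma: it is imported from \cite{prenosil16}, where the proof is a short substitution argument. Your skeleton---contraposition, completeness of $\logic{L}$ with respect to its reduced models, and the fact that reduced models of any super-Belnap logic are De Morgan algebras with lattice filters---is sound, and the matrix you are after does exist. But the step you defer (``where the real work lies'') is not a compatibility check one can leave to the reader; it is the entire mathematical content of the lemma, and nothing you have put on the table can carry it out. Concretely, writing $\mathbf{B},\mathbf{N}$ for the designated and undesignated negation fixpoints of $\Bmatrix$, you need the assignment $s(\beta,\nu)\mapsto s^{\Bmatrix}(\mathbf{B},\mathbf{N})$, for binary terms $s$, to be a well-defined strict surjective homomorphism from the submatrix generated by $\beta,\nu$ onto $\Bmatrix$. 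Well-definedness means: whenever $s(\beta,\nu)=t(\beta,\nu)$ holds in $\algebra{A}$, then $s(\mathbf{B},\mathbf{N})=t(\mathbf{B},\mathbf{N})$ in $\Bmatrix$. The purely order-theoretic data you extracted ($\beta\in F$, $\beta\le\dmneg\beta$, $\nu\notin F$, $\dmneg\nu\le\nu$) do not visibly exclude, say, $s(\beta,\nu)=t(\beta,\nu)$ with $s(\mathbf{B},\mathbf{N})=\mathbf{T}$ but $t(\mathbf{B},\mathbf{N})=\mathbf{B}$, and no choice of congruence can repair such a collision.

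The tool that fills the gap is precisely the argument you were trying to avoid. For each binary term $s$: if $s(\mathbf{B},\mathbf{N})$ is designated, then $p\wedge\dmneg p\vdash_{\B}s(p\wedge\dmneg p,\,q\vee\dmneg q)$; if it is undesignated, then $s(p\wedge\dmneg p,\,q\vee\dmneg q)\vdash_{\B}q\vee\dmneg q$. Both follow from Proposition \ref{prop:information monotone}: any $\Bmatrix$-valuation making $p\wedge\dmneg p$ designated sends it to $\mathbf{B}$ and sends $q\vee\dmneg q$ to a value $\sqsupseteq\mathbf{N}$, and designation is upward closed in the information order (dually for the second rule). Evaluating these two $\B$-valid rules in $\langle\algebra{A},F\rangle$ at $p\mapsto\beta$, $q\mapsto\nu$ (noting $\beta\wedge\dmneg\beta=\beta\in F$ and $\nu\vee\dmneg\nu=\nu\notin F$) yields that $s(\beta,\nu)\in F$ if and only if $s(\mathbf{B},\mathbf{N})$ is designated; applying this to $s$, $t$, $\dmneg s$ and $\dmneg t$ rules out every collision and gives strictness at the same time. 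But once these two validities are in hand, the reduced-model detour is redundant: given any rule $\Gamma\vdash\varphi$ valid in $\logic{L}$ but not in $\B$, take a $\Bmatrix$-valuation $v$ refuting it, let $\sigma$ substitute $\top$, $\bot$, $p\wedge\dmneg p$ or $q\vee\dmneg q$ for each variable according to its $v$-value, and chain $p,\dmneg p\vdash_{\B}\sigma[\Gamma]$, $\sigma[\Gamma]\vdash_{\logic{L}}\sigma(\varphi)$ (structurality), $\sigma(\varphi)\vdash_{\B}q\vee\dmneg q$. That three-step chain is the proof in \cite{prenosil16}. A final small point: your closing remark misfires, since Lemma \ref{lemma:contraposition}\,(ii) maps the case $\B\subsetneq\logic{L}\subseteq\K$ to proper extensions of $\B$ lying \emph{below} $\LP$, not to extensions \emph{of} $\LP$, and only the latter case is trivial.
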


\begin{proposition} \label{prop:interpolation-lp-ecq}
  Let $\logic{L}$ be a proper extension of $\B$. If $\logic{L}$ enjoys inter\-polation, then either $\logic{L} = \LP$ or $\ECQ \subseteq \logic{L}$.
\end{proposition}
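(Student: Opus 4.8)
The plan is to squeeze an interpolant out of the one consequence that Lemma~\ref{lemma:lp-cap-ecq} forces on \emph{every} proper extension of $\B$. Since $\{p, \dmneg p\}$ and $\{p \wedge \dmneg p\}$ are interderivable already in $\B$ (hence in $\logic{L}$), that lemma gives $p \wedge \dmneg p \vdash_{\logic{L}} q \vee \dmneg q$. I would apply interpolation to this single-premise consequence: the interpolant $\chi$ may contain only atoms common to $p \wedge \dmneg p$ and $q \vee \dmneg q$, and these two formulas share no atom, so $\chi$ is variable-free. As $\{\top, \bot\}$ is closed under all the connectives of $\Bmatrix$, every variable-free formula evaluates constantly to $\top$ or to $\bot$, so $\chi \dashv\vdash_{\B} \top$ or $\chi \dashv\vdash_{\B} \bot$, and the same interderivability then holds in $\logic{L}$.

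These two shapes of $\chi$ yield the two disjuncts. If $\chi \dashv\vdash \bot$, then from $p \wedge \dmneg p \vdash_{\logic{L}} \chi$ I obtain $p \wedge \dmneg p \vdash_{\logic{L}} \bot$, that is $p, \dmneg p \vdash_{\logic{L}} \bot$, which is exactly the rule axiomatizing $\ECQ$; thus $\ECQ \subseteq \logic{L}$. If instead $\chi \dashv\vdash \top$, then from $\emptyset \vdash_{\logic{L}} \top$ (the axiom $\emptyset \seq \top$) together with $\chi \vdash_{\logic{L}} q \vee \dmneg q$ I obtain the law of excluded middle $\emptyset \vdash_{\logic{L}} q \vee \dmneg q$, so $\LP \subseteq \logic{L}$. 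This settles everything except the subcase $\LP \subsetneq \logic{L}$, which must still be shown to force $\ECQ \subseteq \logic{L}$. (Interpolation is doing real work here: the logic $\Kleq$ is a proper extension of $\B$ containing neither $\LP$ nor $\ECQ$, and it is precisely its failure of interpolation that excludes it.)

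The subcase $\LP \subsetneq \logic{L}$ is where I expect the genuine obstacle, and I would isolate it as the lemma that every proper extension of $\LP$ contains $\ECQ$. Choose a rule $\Gamma \vdash \varphi$ valid in $\logic{L}$ but not in $\LP$, witnessed by a valuation $v \colon \FmAlgebra \to \LPmatrix$ with $v[\Gamma]$ designated and $v(\varphi) = \bot$, the unique undesignated value. With $x$ a fresh variable, define a substitution $\sigma$ sending each variable to $\top$, to $\bot$, or to $x$ according as $v$ sends it to the top, bottom, or middle value $m$ of $\LPmatrix$, so that under the valuation $x \mapsto m$ each $\sigma(\psi)$ recovers $v(\psi)$. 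The only $\LPmatrix$-valuation making both $x$ and $\dmneg x$ designated is $x \mapsto m$, whence $\{x, \dmneg x\} \vdash_{\LP} \sigma(\gamma)$ for every premise $\gamma \in \Gamma$; transitivity with the $\logic{L}$-valid (and, by structurality, substitutable) rule then gives $\{x, \dmneg x\} \vdash_{\logic{L}} \sigma(\varphi)$. The crux is that $\sigma(\varphi)$ collapses to $\bot$: the middle value $m$ is the information-greatest element of $\LPmatrix$, so the valuations $x \mapsto \bot$ and $x \mapsto \top$ both lie information-below $x \mapsto m$, and by information-monotonicity of the connectives (Proposition~\ref{prop:information monotone}) the value of $\sigma(\varphi)$ at these valuations is $\infleq$ its value $\bot$ at $x \mapsto m$; as $\bot$ is information-minimal, $\sigma(\varphi)$ (a formula over $x$ and the constants alone) takes the value $\bot$ under all three choices of $x$. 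Hence $\sigma(\varphi) \dashv\vdash_{\LP} \bot$, so $\sigma(\varphi) \vdash_{\logic{L}} \bot$, and combining this with the previous display yields $x, \dmneg x \vdash_{\logic{L}} \bot$, i.e.\ $\ECQ \subseteq \logic{L}$. Thus the whole difficulty concentrates in this lemma, and within it in the single observation that information-monotonicity pins $\sigma(\varphi)$ to $\bot$; the rest is bookkeeping with the defining rules of $\LP$ and $\ECQ$.
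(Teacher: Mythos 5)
Your proof is correct, and its first half is exactly the paper's argument: pass from Lemma~\ref{lemma:lp-cap-ecq} to $p \wedge \dmneg p \vdash_{\logic{L}} q \vee \dmneg q$, observe that an interpolant must be variable-free and hence equivalent over $\B$ to $\top$ or to $\bot$, and conclude that either $\ECQ \subseteq \logic{L}$ or $\LP \subseteq \logic{L}$. The divergence is in the leftover subcase $\LP \subsetneq \logic{L}$: the paper settles it in one line by citing Pynko's theorem \cite{pynko00} that every proper extension of $\LP$ lies above $\ECQ$, whereas you prove that theorem from scratch, and your proof is sound. Taking a rule $\Gamma \vdash \varphi$ valid in $\logic{L}$ but refuted in $\LP$ by a valuation $v$, substituting $\top$, $\bot$, or a fresh variable $x$ for the atoms according to $v$, and using that $x \mapsto m$ is the unique $\LPmatrix$-valuation designating both $x$ and $\dmneg x$, you obtain $x, \dmneg x \vdash_{\logic{L}} \sigma(\varphi)$ by structurality and cut (an argument that works even when $\Gamma$ is infinite); then information-monotonicity (Proposition~\ref{prop:information monotone}) pins $\sigma(\varphi)$ to the undesignated value under all three assignments to $x$, because $m$ is information-greatest and the bottom element is information-minimal in $\LPmatrix$ --- this is the same device the paper itself deploys to prove weak conservativity (Proposition~\ref{prop:weak-conservativity}), so your argument stays entirely within the paper's toolkit. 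What your route buys is self-containedness: the proposition no longer rests on an external black box, at the cost of a substantially longer proof. A minor bonus of your write-up is that it makes explicit the reduction of the two-premise rule $p, \dmneg p \vdash q \vee \dmneg q$ to the single-premise form required by the definition of interpolation, a step the paper's proof leaves tacit.
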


\begin{proof}
  By Lemma \ref{lemma:lp-cap-ecq} we have $p, \dmneg p \vdash_{\logic{L}} q \vee \dmneg q$. Only a constant formula may be an interpolant of this rule, and all constant formulas are equivalent in $\B$ to either $\bot$ or $\top$. Therefore either $p, \dmneg p \vdash_{\logic{L}} \bot$ or $\top \vdash_{\logic{L}} q \vee \dmneg q$. Moreover, Pynko \cite{pynko00} proved that each proper extension of $\LP$ lies above $\ECQ$.
\end{proof}

  Taking into account that there is a continuum of explosive extensions of~$\B$ as well as a continuum of logics in the interval $[\B, \LP]$ as proved in \cite{prenosil16}, we obtain the following corollary to Propositions \ref{prop:exp-interpolation} and \ref{prop:interpolation-lp-ecq}.

\begin{corollary}
  There is a continuum of super-Benap logics with inter\-polation and a continuum of super-Belnap logics without interpolation.
\end{corollary}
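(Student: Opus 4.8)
The plan is to prove the two halves separately, each time combining one of the two preceding propositions with a corresponding ``continuum'' fact from~\cite{prenosil16}: for the positive half I would use Proposition~\ref{prop:exp-interpolation} together with the existence of a continuum of explosive extensions of~$\B$, and for the negative half I would use Proposition~\ref{prop:interpolation-lp-ecq} together with the existence of a continuum of logics in the interval $[\B, \LP]$.

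For the logics \emph{with} interpolation, first I would recall that $\B$ itself enjoys interpolation by the result of Anderson and Belnap cited above. I would then apply Proposition~\ref{prop:exp-interpolation} with $\logic{L} = \logic{L}_{0} = \B$: the hypothesis $\bot \vdash_{\B} p$ holds because the constant $\bot$ is never designated in $\Bmatrix$, so the premise of this rule is never satisfied and the rule holds vacuously. Since $\B$ has interpolation, the proposition yields that every explosive extension of $\B$ has interpolation. As \cite{prenosil16} provides a continuum of pairwise distinct explosive extensions of $\B$, this furnishes a continuum of super-Belnap logics with interpolation.

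For the logics \emph{without} interpolation, I would start from the continuum of logics in $[\B, \LP]$ provided by \cite{prenosil16}. The key observation is that $\ECQ \not\subseteq \LP$: taking the valuation sending $p$ to the middle (paradoxical) value of $\LPmatrix$ makes both $p$ and $\dmneg p$ designated while leaving $\bot$ undesignated, so $p, \dmneg p \not\vdash_{\LP} \emptyset$. Consequently $\ECQ \not\subseteq \logic{L}$ for every $\logic{L} \in [\B, \LP]$, since $\logic{L} \subseteq \LP$. For any such $\logic{L}$ which is moreover a proper extension of $\B$ and distinct from $\LP$, the contrapositive of Proposition~\ref{prop:interpolation-lp-ecq} then forces $\logic{L}$ to lack interpolation. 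Discarding the two endpoints $\B$ and $\LP$ from the continuum in $[\B, \LP]$ still leaves a continuum of logics, each of which lacks interpolation.

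The argument is essentially a bookkeeping exercise once the two propositions and the two cardinality facts are in hand, so I do not anticipate a genuine obstacle. The only points requiring care are verifying that the hypotheses of Proposition~\ref{prop:exp-interpolation} are met by $\B$ (in particular the vacuous validity of $\bot \vdash_{\B} p$) and checking the strictness condition $\ECQ \not\subseteq \LP$; both are settled by inspecting the matrices $\Bmatrix$ and $\LPmatrix$.
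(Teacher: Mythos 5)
Your proposal is correct and is exactly the argument the paper intends: the corollary is stated as an immediate consequence of Propositions~\ref{prop:exp-interpolation} and~\ref{prop:interpolation-lp-ecq} together with the two continuum facts from \cite{prenosil16}, which is precisely how you combine them (interpolation for $\B$ plus explosive extensions for the positive half; $\ECQ \not\subseteq \LP$ plus the interval $[\B,\LP]$ for the negative half). Your additional verifications --- the vacuous validity of $\bot \vdash_{\B} p$ and the failure of $p, \dmneg p \vdash_{\LP} \emptyset$ at the paradoxical value --- are both correct details that the paper leaves implicit.
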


  The interpolation properties defined above extend naturally to Gentzen relations. To obtain the appropriate definitions for Gentzen relations, it suffices to replace the formulas $\varphi$, $\psi$, and $\chi$ by sequents.

\begin{proposition}
  Let $\logic{L}_{i}$ be a super-Belnap logic and $\GentzenRelation \logic{L}_{i}$ be the Gentzen relation simply equivalent to $\logic{L}_{i}$ via $\tautrans$ for $i \in \{ 0, 1, 2 \}$. Then $\logic{L}_{0}$ enjoys $(\logic{L}_{1}, \logic{L}_{2})$-interpolation if and only if $\GentzenRelation \logic{L}_{0}$ enjoys $(\GentzenRelation \logic{L}_{1}, \GentzenRelation \logic{L}_{2})$-interpolation.
\end{proposition}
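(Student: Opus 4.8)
The plan is to shuttle interpolants across the simple equivalence of Theorem~\ref{thm:hilbertizability}, using the transformer $\tautrans \colon \Gamma \seq \Delta \mapsto \{ \dmneg \bigwedge \Gamma \vee \bigvee \Delta \}$ in one direction and $\rhotrans \colon \varphi \mapsto \{ \emptyset \seq \varphi \}$ in the other. Two facts will do all the work. First, since each $\GentzenRelation \logic{L}_{i}$ is simply equivalent to $\logic{L}_{i}$ via $\tautrans$, we have $S_{1} \vdash_{\GentzenRelation \logic{L}_{i}} S_{2}$ iff $\tautrans(S_{1}) \vdash_{\logic{L}_{i}} \tautrans(S_{2})$ and $\varphi \vdash_{\logic{L}_{i}} \psi$ iff $\rhotrans(\varphi) \vdash_{\GentzenRelation \logic{L}_{i}} \rhotrans(\psi)$, while $\tautrans \rhotrans(\varphi) \dashv\vdash_{\B} \varphi$ and $\rhotrans \tautrans(S) \dashv\vdash_{\GentzenRelation \B} S$, these interderivabilities persisting in every extension. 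Second, and this is the point on which the atom-sharing condition will hinge, both transformers preserve the set of atoms of their argument: $\dmneg \bigwedge \Gamma \vee \bigvee \Delta$ contains exactly the atoms appearing in $\Gamma \seq \Delta$, and $\emptyset \seq \varphi$ exactly the atoms of $\varphi$.

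For the forward direction I would assume $\logic{L}_{0}$ has $(\logic{L}_{1}, \logic{L}_{2})$-interpolation and start from $S_{1} \vdash_{\GentzenRelation \logic{L}_{0}} S_{2}$. Writing $\varphi_{i}$ for the unique formula in $\tautrans(S_{i})$, simple equivalence yields $\varphi_{1} \vdash_{\logic{L}_{0}} \varphi_{2}$, so there is a formula interpolant $\chi$ with $\varphi_{1} \vdash_{\logic{L}_{1}} \chi$, $\chi \vdash_{\logic{L}_{2}} \varphi_{2}$, and the atoms of $\chi$ common to $\varphi_{1}$ and $\varphi_{2}$. I would then propose $\emptyset \seq \chi$ as the sequent interpolant: translating the two derivabilities back by simple equivalence and using $\tautrans(\emptyset \seq \chi) = \tautrans \rhotrans(\chi) \dashv\vdash \chi$ gives $S_{1} \vdash_{\GentzenRelation \logic{L}_{1}} \emptyset \seq \chi$ and $\emptyset \seq \chi \vdash_{\GentzenRelation \logic{L}_{2}} S_{2}$, and the atoms of $\emptyset \seq \chi$ are those of $\chi$, which by atom preservation occur in both $S_{1}$ and $S_{2}$.

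The converse is symmetric. Starting from $\varphi \vdash_{\logic{L}_{0}} \psi$, I would pass to $\emptyset \seq \varphi \vdash_{\GentzenRelation \logic{L}_{0}} \emptyset \seq \psi$ via $\rhotrans$, invoke $(\GentzenRelation \logic{L}_{1}, \GentzenRelation \logic{L}_{2})$-interpolation to obtain a sequent interpolant $S$, and set $\chi$ to be the unique formula in $\tautrans(S)$. Applying simple equivalence in the reverse direction, together with $\tautrans(\emptyset \seq \varphi) \dashv\vdash \varphi$ and $\tautrans(\emptyset \seq \psi) \dashv\vdash \psi$, would give $\varphi \vdash_{\logic{L}_{1}} \chi$ and $\chi \vdash_{\logic{L}_{2}} \psi$, and the atoms of $\chi$ agree with those of $S$ and hence occur in both $\varphi$ and $\psi$.

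Because the two transformers are mutually inverse up to interderivability and every step is a mechanical application of simple equivalence, I do not expect any deep obstacle. The only point requiring genuine care --- and the sole place where the specific form of $\tautrans$ enters --- is checking that converting a formula interpolant to a sequent interpolant and back neither creates nor destroys atoms. This is precisely why it matters that the empty conjunction and disjunction evaluate to the atom-free constants $\top$ and $\bot$: were $\tautrans$ to introduce a stray variable, the atom-sharing clause of interpolation could fail to transfer.
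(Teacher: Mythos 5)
Your proposal is correct and takes essentially the same approach as the paper: the paper's own proof is a single sentence observing that a variable occurs in $\tautrans(\Gamma \seq \Delta)$ or $\rhotrans(\varphi)$ if and only if it occurs in $\Gamma \seq \Delta$ or $\varphi$, leaving the mechanical shuttling of interpolants across the simple equivalence implicit. You have merely spelled out those routine translation steps, and you correctly identify atom preservation of the transformers as the one point where the specific form of $\tautrans$ matters.
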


\begin{proof}
  This holds by virtue of the fact that a variable occurs in $\tautrans(\Gamma \seq \Delta)$ or $\rhotrans(\varphi)$ respectively if and only if it occurs in $\Gamma \seq \Delta$ or $\varphi$.
\end{proof}

  Moreover, in the Gentzen case it suffices by Proposition \ref{prop:sequent-composition-decomposition} to find a \emph{set} of sequents which jointly plays the role of the interpolant.

  We now provide a broad sufficient condition for a super-Belnap logic $\logic{L}$ to enjoy $(\logic{L}, \B)$-interpolation and therefore ordinary interpolation.

\begin{definition}
  A \emph{cut formula} of a structural rule is a formula which only occurs in the premises of the rule. A \emph{side formula} of a structural rule is a formula which occurs only on the left-hand sides or only on the right-hand sides of sequents in the rule.
\end{definition}

  A cut formula (a side formula) of an instance of a structural rule is the appropriate instance of the atomic cut formula (the atomic side formula). It may happen, although this case is not very interesting, that a formula is both a cut formula and a side formula of a structural rule.

\begin{definition}
  A \emph{generalized cut rule} is a structural rule such that each formula which occurs in the rule is either a cut formula or a side formula.
\end{definition}

  For example, Limited Cut and Explosive Cut are generalized cut rules, whereas Identity and the rule for $\Kleq$ combining Identity and Cut are not.

\begin{definition}
  A rule \emph{does not introduce new variables} if all variables which occur in the conclusion also occur some of the premises.
\end{definition}

  In particular, a generalized cut rule does not introduce new variables, and neither do any of the elimination rules. As far as interpolation goes, Weakening is the only problematic rule which may introduce new variables.

\begin{proposition} \label{prop:generalized-cut-expansions}
  If a structural rule is a generalized cut rule, then so are all of its balanced non-conflicting expansions. If a structural rule does not introduce new variables, then neither do any of its expansions.
\end{proposition}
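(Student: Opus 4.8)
The plan is to treat the two claims separately, using throughout the fact that in a structural rule every formula is in fact an atom, since structural rules contain no connectives or constants. Consequently the atoms occurring in any $\sigma$-expansion arise precisely by decomposing the formulas $\sigma(p)$ for the atoms $p$ of the original rule. The non-conflicting hypothesis guarantees that the variables coming from distinct atoms $p \neq q$ are disjoint, so each variable $x$ occurring in the expansion can be assigned a unique \emph{source atom} $p$ of the original rule, namely the one with $x$ occurring in $\sigma(p)$. This bookkeeping is what lets the classification into cut and side formulas transfer from the rule to its expansion.

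For the first claim I would show that a cut atom of the original rule contributes only cut formulas to the expansion and a side atom contributes only side formulas. If $p$ is a cut formula, it occurs only in the premises $\Gamma_{i} \seq \Delta_{i}$, so $\sigma(p)$ occurs only in the $\sigma(\Gamma_{i} \seq \Delta_{i})$ and not in $\sigma(\Gamma \seq \Delta)$; since by non-conflicting the variables of $\sigma(p)$ cannot enter through any other atom, and since elimination rules introduce no new variables, these variables occur only in $\bigcup_{i} \AtSeq{\sigma(\Gamma_{i} \seq \Delta_{i})}$ and never in the conclusion $\Lambda \seq \Pi \in \AtSeq{\sigma(\Gamma \seq \Delta)}$, hence are cut formulas of the expansion. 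If instead $p$ is a side formula, say occurring only on the left, then $\sigma(p)$ occurs only on the left throughout the rule, and I would track on which side of the turnstile each of its variables can land under elimination. Inspecting Lemma~\ref{lemma:elimination-derivable}, the only steps that move a formula across the turnstile are the two for negation (items (v)--(vi)); conjunction, disjunction, and the constants never change sides. Thus a variable occurrence in $\sigma(p)$ ends up on the left or the right exactly according to the parity of the number of negations above it, that is, according to its polarity in $\sigma(p)$. Balancedness of $\sigma$ says precisely that all occurrences of a given variable in $\sigma(p)$ have the same polarity, so that variable lands on a single side; combined with non-conflicting it therefore occurs only on that side throughout the expansion, and is a side formula.

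For the second claim I would run a variable chase. No elimination rule introduces new variables, so every variable $x$ of the conclusion $\Lambda \seq \Pi$ already occurs in $\sigma(\Gamma \seq \Delta)$, hence in $\sigma(p)$ for some atom $p$ of $\Gamma \seq \Delta$; moreover the decomposition witnessing $\Lambda \seq \Pi \in \AtSeq{\sigma(\Gamma \seq \Delta)}$ exhibits a decomposition of $\sigma(p)$ in which $x$ survives. Because the original rule does not introduce new variables, this $p$ also occurs in some premise $\Gamma_{j} \seq \Delta_{j}$, where $\sigma(p)$ sits at a fixed side. Applying to that occurrence the same branch choices that kept $x$ alive on the conclusion side again keeps $x$, so some atomic sequent of $\AtSeq{\sigma(\Gamma_{j} \seq \Delta_{j})}$ contains $x$; taking the union over $i$ then places $x$ among the premises of the expansion, as required.

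The main obstacle in both claims is controlling what the elimination steps do to individual variables. For the first claim the delicate case is negation: items (v)--(vi) of Lemma~\ref{lemma:elimination-derivable} are the only steps that cross the turnstile, and it is exactly to stop a single variable from thereby surfacing on both sides that balancedness is imposed — without it a side atom could yield a variable appearing on both the left and the right of the expansion and failing to be a side formula. For the second claim the delicate point is survival under the branching steps (ii)--(iii): each branch discards part of a sequent, so one must verify that the variable carried by the conclusion is not annihilated in every branch of the relevant premise. The cleanest way I see to secure this is to reuse the very decomposition already furnished by the conclusion and transport it to the matching occurrence of $\sigma(p)$ in the premise, using Lemmas~\ref{lemma:subst1} and~\ref{lemma:subst2} to relate $\AtSeq{\sigma(-)}$ and $\sigma[\AtSeq{-}]$; once this survival step is in place, both claims follow from the source-atom bookkeeping set up above.
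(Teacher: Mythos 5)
Your proposal is correct and takes essentially the same route as the paper's proof, which rests on precisely your two observations: that an atom occurring with a fixed polarity in a sequent lands on a fixed side of every sequent in $\AtSeq{\cdot}$ (so that balancedness plus non-conflictingness sends side formulas to side formulas), and that $\AtSeq{\Gamma \seq \Delta}$ contains exactly the same atoms as $\Gamma \seq \Delta$ (your survival argument, which together with non-conflictingness handles cut formulas and yields the second claim). The only difference is one of presentation: the paper records these two facts as easy observations, whereas you sketch how they are proved.
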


\begin{proof}
  Let $\set{\Gamma_{i} \seq \Delta_{i}}{i \in I} \vdash \Gamma \seq \Delta$ be a generalized cut rule (let us call it $\rho$) and let $\sigma$ be a balanced non-conflicting substitution. It is easy to prove that an atom which only occurs positively in a sequent $\Lambda \seq \Pi$ (i.e.\ only occurs positively in formulas in $\Pi$ and only occurs negatively in formulas in $\Lambda$) will only occur on the right-hand side of each sequent in $\AtSeq{\Lambda \seq \Pi}$, and likewise an atom which only occurs negatively in $\Lambda \seq \Pi$ will only occur on the left-hand side of each sequent in $\AtSeq{\Lambda \seq \Pi}$. Therefore if $p$ is a side formula in $\rho$, then each atom of $\sigma(p)$ will be a side formula of the $\sigma$-expansion of $\rho$, using the fact that $\sigma$ is balanced and non-conflicting. It is also easy to observe that $\AtSeq{\Gamma \seq \Delta}$ and $\Gamma \seq \Delta$ contain exactly the same atoms. Therefore if $p$ is a cut formula of $\rho$, then each atom of $\sigma(p)$ will also be a cut formula of the $\sigma$-expansion of $\rho$, using again the fact that $\sigma$ is non-conflicting. The same observation regarding the atoms which occur in $\AtSeq{\Gamma \seq \Delta}$ also proves the second claim.
\end{proof}

\begin{theorem} \label{thm:interpolation}
  Each extension $\GentzenRelation \logic{L}$ of $\GB$ by a set of generalized cut rules enjoys $(\GentzenRelation \logic{L}, \GB)$-interpolation.
\end{theorem}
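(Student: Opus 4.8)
The plan is to read the interpolant off a normal-form proof. First I would pass from $\GentzenRelation \logic{L}$ to the calculus $\GentzenRelation \logic{L}^{+}$ obtained by adding, to a generalized-cut axiomatization of $\GentzenRelation \logic{L}$, all balanced non-conflicting expansions of its specific rules. By Proposition~\ref{prop:expansions-valid} these added rules are valid in $\GentzenRelation \logic{L}$, so $\GentzenRelation \logic{L}^{+}$ axiomatizes the very same relation; by Proposition~\ref{prop:sigma-expansion-property} it satisfies the expansion property; and by Proposition~\ref{prop:generalized-cut-expansions} every one of its specific rules is again a generalized cut rule, so in particular it introduces no new variables. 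Hence, given $\Gamma \seq \Delta \vdash_{\GentzenRelation \logic{L}} \Gamma' \seq \Delta'$, Theorem~\ref{thm:normal-form} furnishes a structurally atomic analytic--synthetic proof with the subformula property, which splits into the three successive layers described after the definition of analytic--syntheticity: elimination rules at the top, atomic structural rules in the middle, and introduction rules at the bottom.

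I would then take as interpolant the set $\mathcal{S}$ of sequents sitting at the interface between the structural layer and the introduction layer. Three of the four requirements are then essentially immediate. The upper part derives each sequent of $\mathcal{S}$ from $\Gamma \seq \Delta$ using only elimination rules and atomic instances of the generalized cut rules and their expansions, all valid in $\GentzenRelation \logic{L}$, so $\Gamma \seq \Delta \vdash_{\GentzenRelation \logic{L}} \mathcal{S}$. The lower part derives $\Gamma' \seq \Delta'$ from $\mathcal{S}$ using only introduction rules (and, once the normalization described below has been carried out, possibly some Weakenings), all of which belong to $\GKforB$, so $\mathcal{S} \vdash_{\GB} \Gamma' \seq \Delta'$. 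And since the rules below the interface---introduction rules and Weakenings---only build up or adjoin formulas and never delete a subformula, every atom occurring in $\mathcal{S}$ still occurs in $\Gamma' \seq \Delta'$.

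The remaining, and genuinely delicate, requirement is that every atom of $\mathcal{S}$ also occur in $\Gamma \seq \Delta$. Elimination rules, Contraction, and the generalized cut rules and their expansions introduce no new variables (Proposition~\ref{prop:generalized-cut-expansions}), so the sole way a variable $p$ absent from $\Gamma \seq \Delta$ could enter $\mathcal{S}$ is through a Weakening above the interface---exactly the rule singled out before the statement as the only offender. The plan is to normalize the proof so that no such Weakening survives above the interface: any Weakening introducing a variable not in $\Gamma \seq \Delta$ is permuted downward past the middle structural rules into the bottom layer, which is harmless precisely because Weakening itself lies in $\GKforB$ and may join the introduction rules without disturbing $\mathcal{S} \vdash_{\GB} \Gamma' \seq \Delta'$. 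This permutation is routine whenever the weakened atom is a side formula of the rule immediately below it; the one subtle case is when such a fresh atom would be the cut formula of a generalized cut, but then, being absent from $\Gamma \seq \Delta$, it must have been weakened into every premise in which it occurs, so the cut is degenerate and can be replaced outright by Weakening. Once the upper part contains no variable-introducing rule, all atoms of $\mathcal{S}$ occur in $\Gamma \seq \Delta$, and the four conditions together establish $(\GentzenRelation \logic{L}, \GB)$-interpolation. I expect this Weakening-permutation argument, together with the degenerate-cut case, to be the main obstacle; everything else is bookkeeping driven by the normal-form theorem and the tracking of variables.
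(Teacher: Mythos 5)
Your overall architecture is the same as the paper's: close the calculus under balanced non-conflicting expansions, normalize by Theorem~\ref{thm:normal-form}, take the sequents at the elimination/structural--introduction interface (the paper's ``critical nodes'') as the interpolant, and then repair the variables introduced by Weakening. The gap lies in how you do the repair. The paper does not permute Weakenings downward: for each occurrence of a bad variable $p$ \emph{in a critical sequent} it forms the tree of all \emph{ancestors} of that occurrence, observes that every specific structural rule met by this tree must have $p$ in side-formula position (a cut formula never occurs in the conclusion of a rule, so an occurrence in a conclusion can only instantiate a side formula), and deletes the whole ancestor tree at once. Your local permutation step is not available in general: a side formula of a generalized cut rule may occur in \emph{several premises} of the rule (and several times), so a single Weakening sitting above one premise cannot be pushed past such an instance --- all matching occurrences in all premises, each with its own Weakening ancestry further up, must be removed \emph{simultaneously}, which is precisely the global ancestor-tree deletion, not a routine local exchange. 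Moreover, even when the deletion is performed, one must know that the inference with the side atom erased is still an inference of the calculus; this holds because it is the expansion by the balanced non-conflicting substitution sending that atom to $\top$ (on the left) or $\bot$ (on the right), a point your sketch never supplies.

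The ``degenerate cut'' step is actually false for general generalized cut rules. Consider the rule $\{a \seq q,\ q \seq b\} \vdash \emptyset \seq \emptyset$: every formula in it is a cut formula or a side formula, so it is a legitimate generalized cut rule, and the theorem quantifies over all such rules. If its cut formula is instantiated by a fresh atom $p$ weakened into both premises, your prescription would replace the instance by Weakening from the unweakened premises $a' \seq \emptyset$ and $\emptyset \seq b'$; but $\emptyset \seq \emptyset$ is not derivable by Weakening from either of them, since Weakening only \emph{adds} formulas. (Your claim is true for ordinary Cut, whose conclusion is a weakening of a premise, but that is special.) The correct observation is that such instances never need to be touched at all: since a cut formula occurs only in premises, a fresh variable sitting in cut-formula position does not pass into the conclusion of that rule and hence never reaches the interface; only the occurrences that do reach the interface matter, and those travel exclusively through side-formula positions, where the ancestor-tree deletion applies. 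As written, your normalization both attempts a transformation that can fail and is stronger than what the interpolation argument requires.
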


\begin{proof}
  If a generalized cut rule is valid in $\GB$, then so are all of its balanced non-conflicting expansions by Proposition \ref{prop:expansions-valid}. By Proposition \ref{prop:generalized-cut-expansions} these are all generalized cut rules. Now consider the extension of $\GKforB$ by the set of all balanced non-conflicting expansions of all rules in the given set of generalized cut rules. This calculus satisfies the expansion property by Proposition \ref{prop:sigma-expansion-property}. By Theorem \ref{thm:normal-form} we may therefore restrict to structurally atomic analytic--synthetic proofs in this calculus. Moreover, Weakening is the only structural rule of this calculus which intro\-duces new variables.

  Consider a structurally atomic analytic--synthetic proof of $\Gamma \seq \Delta$ from the premises $\SequentSet$ in this calculus. Let us call a node in this proof \emph{critical} if all inferences above the node are elimination rules or structural rules and all inferences below are introduction rules. Each branch of the proof either intersects a critical node or terminates in a logical axiom.

  If each critical node only contains variables which occur in some premise of the proof, then the set of critical nodes forms an interpolant between $\SequentSet$ and $\Gamma \seq \Delta$, since each variable which occurs in a critical sequent also occurs in the conclusion. To prove the theorem, it therefore suffices to show that if $p$ does not occur in $\SequentSet$ and an atomic sequent $\Lambda \seq \Pi, p$ or $p, \Lambda \seq \Pi$ has a structurally atomic analytic--synthetic proof in the calculus, then so does $\Lambda \seq \Pi$. This is because each critical node $\Lambda \seq \Pi$ may then be transformed into a sequent $\Lambda' \seq \Pi'$ which only contains variables which occur in $\SequentSet$ by finitely many applications of this transformation, and moreover the sequent $\Lambda \seq \Pi$ is derivable from $\Lambda' \seq \Pi'$ using finitely many atomic instances of Weakening. This transformation may be performed on all critical nodes simultaneously, since no branch of the proof contains two such nodes.

  Thus, consider an atom $p$ which does not occur in $\SequentSet$ and a sequent $\Lambda \seq \Pi, p$ or $p, \Lambda \seq \Pi$ which has a structurally atomic analytic--synthetic proof from $\SequentSet$ in the calculus. The tree of all ancestors of this instance of $p$ is defined in the obvious way. The leaves of this tree must be the results of Weakening, since no other rule in the proof above $\Lambda \seq \Pi$ introduces new variables. Crucially, since all the specific structural rules are generalized cut rules, the atom in question must be a side formula of each specific structural rule in this tree. It is now immediate that we may delete all the ancestors of this instance of $p$ from the subproof, and obtain a (structurally atomic analytic--synthetic) proof of $\Lambda \seq \Pi$.
\end{proof}

  The following results are now immediate corollaries of the above theorem. The first claim of the following proposition was proved in~\cite{anderson+belnap75}, as recalled above, while the others appear to be new.

\begin{proposition} \label{prop:interpolation}
  The logic $\B$ enjoys interpolation. The logic $\K$ enjoys $(\K, \B)$-interpolation. The logic $\ETL$ enjoys $(\ETL, \B)$-interpolation.
\end{proposition}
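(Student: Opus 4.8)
The plan is to read Proposition~\ref{prop:interpolation} off Theorem~\ref{thm:interpolation} in three instances, after (i) checking that each of $\B$, $\K$, and $\ETL$ is axiomatized over $\GB$ by a set of generalized cut rules, and (ii) transferring the resulting Gentzen-level interpolation back to the Hilbert level. First I would record the harmless reduction: since $\B \subseteq \K$ and $\B \subseteq \ETL$, and more generally $\B \subseteq \logic{L}$ for every super-Belnap logic $\logic{L}$, any $(\logic{L}, \B)$-interpolant $\chi$ satisfies $\chi \vdash_{\B} \psi$, which upgrades to $\chi \vdash_{\logic{L}} \psi$; hence $(\logic{L}, \B)$-interpolation always entails ordinary interpolation. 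So it suffices in each case to produce $(\logic{L}, \B)$-interpolation, which is exactly the output of Theorem~\ref{thm:interpolation} once the generalized-cut-rule hypothesis is verified, modulo the passage between Hilbert and Gentzen interpolation supplied by the proposition relating $(\logic{L}_1,\logic{L}_2)$-interpolation for $\logic{L}_0$ with $(\GentzenRelation\logic{L}_1, \GentzenRelation\logic{L}_2)$-interpolation for $\GentzenRelation\logic{L}_0$.

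For $\B$ itself, the relevant set of generalized cut rules is empty: $\GB$ is the extension of $\GB$ by no specific structural rules. Theorem~\ref{thm:interpolation} then yields $(\GB, \GB)$-interpolation, which transfers to $(\B, \B)$-interpolation, and this is precisely ordinary interpolation for $\B$, as remarked just after the definition of $(\logic{L}_1,\logic{L}_2)$-interpolation. For $\K$, I would use that $\GentzenRelation\K = \GK$ is the extension of $\GB$ by the Cut rule, and check that Cut is a generalized cut rule: its cut formula $\varphi$ occurs only in the premises, while each of $\Gamma, \Gamma', \Delta, \Delta'$ occurs on only one side of the sequents in the rule and is therefore a side formula, so every formula of the rule is either a cut formula or a side formula. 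Theorem~\ref{thm:interpolation} gives $(\GK, \GB)$-interpolation, which transfers to $(\K, \B)$-interpolation. For $\ETL$, the same template applies with $\GentzenRelation\ETL = \GETL$ the extension of $\GB$ by Limited Cut, which has already been observed to be a generalized cut rule (its cut formula $\varphi$ sits only in the premises and all remaining formulas are side formulas); Theorem~\ref{thm:interpolation} then delivers $(\GETL, \GB)$-interpolation and hence $(\ETL, \B)$-interpolation.

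I do not expect a genuine obstacle here, since the statement is flagged as an immediate corollary: the entire content is routing the three logics through Theorem~\ref{thm:interpolation}. The only points requiring care are the bookkeeping checks that Cut and Limited Cut satisfy the cut-formula/side-formula dichotomy of a generalized cut rule, and the clean invocation of the Hilbert--Gentzen interpolation equivalence together with $\B \subseteq \logic{L}$ to descend from $(\logic{L}, \B)$-interpolation to ordinary interpolation. Both are direct, so the proof body can be confined to naming the axiomatizing rules, observing they are generalized cut rules, and citing Theorem~\ref{thm:interpolation}.
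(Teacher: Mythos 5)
Your proposal is correct and is exactly the paper's intended argument: the paper states Proposition~\ref{prop:interpolation} as an immediate corollary of Theorem~\ref{thm:interpolation}, with $\GB$, $\GK$, and $\GETL$ being the extensions of $\GB$ by (respectively) the empty set of rules, Cut, and Limited Cut, all of which are generalized cut rules, and the transfer back to the Hilbert level handled by the proposition on the equivalence of Hilbert and Gentzen interpolation. Your additional remark that $(\logic{L},\B)$-interpolation implies ordinary interpolation matches the paper's own observation preceding Theorem~\ref{thm:interpolation}.
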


\begin{proposition}
  The logic $\LP$ enjoys $(\B, \LP)$-interpolation.
\end{proposition}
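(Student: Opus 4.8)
The plan is to reduce $(\B, \LP)$-interpolation for $\LP$ to the already-established $(\K, \B)$-interpolation for $\K$ (Proposition \ref{prop:interpolation}) by means of the Duality Principle, Lemma \ref{lemma:contraposition}(ii). The guiding idea is that De Morgan negation swaps the roles of $\LP$ and $\K$ and simultaneously swaps the two components of the interpolation pair: an $(\K, \B)$-interpolant of a $\K$-consequence becomes, after negating, a $(\B, \LP)$-interpolant of the dual $\LP$-consequence. Throughout I would use the fact, inherited from $\B$, that negation is an involution, so $\dmneg\dmneg\alpha \dashv\vdash \alpha$ in every super-Belnap logic.

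Concretely, suppose $\varphi \vdash_{\LP} \psi$. First I would apply Lemma \ref{lemma:contraposition}(ii) to pass to the dual $\K$-consequence $\dmneg\psi \vdash_{\K} \dmneg\varphi$. Next, since $\K$ enjoys $(\K, \B)$-interpolation by Proposition \ref{prop:interpolation}, there is an interpolant $\chi$ with $\dmneg\psi \vdash_{\K} \chi$ and $\chi \vdash_{\B} \dmneg\varphi$, and every atom of $\chi$ occurs both in $\dmneg\psi$ and in $\dmneg\varphi$, hence in both $\psi$ and $\varphi$. I then claim that $\dmneg\chi$ is the desired $(\B, \LP)$-interpolant of $\varphi \vdash_{\LP} \psi$.

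It remains to verify the two halves together with the variable condition. From $\chi \vdash_{\B} \dmneg\varphi$, contraposition in $\B$ (Lemma \ref{lemma:contraposition}(i)) gives $\dmneg\dmneg\varphi \vdash_{\B} \dmneg\chi$, and cancelling the double negation yields $\varphi \vdash_{\B} \dmneg\chi$. For the other half, from $\dmneg\psi \vdash_{\K} \chi$ together with $\chi \dashv\vdash_{\K} \dmneg\dmneg\chi$ we obtain $\dmneg\psi \vdash_{\K} \dmneg\dmneg\chi$; applying Lemma \ref{lemma:contraposition}(ii) in the reverse direction (with antecedent $\dmneg\chi$ and consequent $\psi$) yields exactly $\dmneg\chi \vdash_{\LP} \psi$. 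Finally, the atoms of $\dmneg\chi$ are precisely the atoms of $\chi$, which already occur in both $\varphi$ and $\psi$. This establishes $(\B, \LP)$-interpolation for $\LP$.

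I expect no serious obstacle here: the argument is essentially a bookkeeping exercise resting on the Duality Principle and the existing interpolation result for $\K$. The only point requiring genuine care is tracking the orientation of the two halves of the interpolation pair under negation and inserting the double-negation cancellations in the right places; reversing the direction of Lemma \ref{lemma:contraposition}(ii) would silently produce an interpolant for the wrong consequence.
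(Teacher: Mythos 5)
Your proof is correct and is essentially the paper's own argument: the paper's proof simply says the claim ``follows immediately from Lemma \ref{lemma:contraposition}'', meaning exactly the dualization you carry out, i.e.\ contraposing $\varphi \vdash_{\LP} \psi$ to $\dmneg\psi \vdash_{\K} \dmneg\varphi$, applying the $(\K,\B)$-interpolation of Proposition \ref{prop:interpolation}, and negating the interpolant back. Your write-up just makes explicit the double-negation bookkeeping that the paper leaves to the reader.
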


\begin{proof}
  The claim follows immediately from Lemma \ref{lemma:contraposition}.
\end{proof}

\begin{proposition}
  The logic $\LP \vee \ECQ$ has $(\LP \vee \ECQ, \LP)$-interpolation.
\end{proposition}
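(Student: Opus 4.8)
The plan is to recognize $\LP \vee \ECQ$ as an explosive extension of $\LP$ and then deduce the claim from Proposition \ref{prop:exp-interpolation}, feeding it the interpolation for $\LP$ established just above.

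First I would verify that $\LP \vee \ECQ$ is genuinely an explosive extension of $\LP$. By definition $\ECQ$ is the extension of $\B$ by the rule $p, \dmneg p \vdash \emptyset$, which is an explosive rule in the sense of the preliminaries; since $\LP$ extends $\B$, the join $\LP \vee \ECQ$ is precisely the extension of $\LP$ by this same explosive rule. I would also record that $\LP$ has the constant $\bot$ with $\bot \vdash_{\LP} p$ (vacuously, as $\bot$ is never designated in $\LPmatrix$), so that the hypotheses of Proposition \ref{prop:exp-interpolation} are satisfied with $\logic{L}_{0} = \LP$.

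Next I would strengthen the interpolation already available for $\LP$. The preceding proposition gives that $\LP$ enjoys $(\B, \LP)$-interpolation; because $\B \subseteq \LP$, every instance $\varphi \vdash_{\B} \chi$ is also an instance $\varphi \vdash_{\LP} \chi$, so this is in fact ordinary interpolation for $\LP$, that is, $(\LP, \LP)$-interpolation.

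Finally I would apply Proposition \ref{prop:exp-interpolation} with $\logic{L} = \logic{L}_{0} = \LP$. Since $\LP$ has $(\LP, \LP)$-interpolation, read as the $(\logic{L}, \logic{L}_{0})$-interpolation hypothesis, the proposition delivers $(\logic{L}_{exp}, \LP)$-interpolation for every explosive extension $\logic{L}_{exp}$ of $\LP$, and in particular for $\LP \vee \ECQ$. The one point to watch is the bookkeeping in this last step: tracing the two cases of the proof of Proposition \ref{prop:exp-interpolation}, if $\varphi \vdash_{\LP \vee \ECQ} \psi$ already holds in $\LP$ then the interpolant $\chi$ satisfies $\varphi \vdash_{\LP} \chi$ (hence $\varphi \vdash_{\LP \vee \ECQ} \chi$) and $\chi \vdash_{\LP} \psi$, while otherwise $\varphi \vdash_{\LP \vee \ECQ} \emptyset$ and $\chi = \bot$ works, with $\bot \vdash_{\LP} \psi$ secured by $\bot \vdash_{\LP} p$. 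In both cases the left side lies in $\LP \vee \ECQ$ and the right side in $\LP$, giving exactly $(\LP \vee \ECQ, \LP)$-interpolation. I expect this matching of the interpolation indices to be the only real subtlety; the rest is routine unwinding of the definitions.
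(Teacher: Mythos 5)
Your proof is correct and takes essentially the same route as the paper's: the paper likewise notes that $\LP$ enjoys $(\LP, \LP)$-interpolation and that $\LP \vee \ECQ$ is an explosive extension of $\LP$, then invokes Proposition \ref{prop:exp-interpolation}. Your extra bookkeeping (verifying $\bot \vdash_{\LP} p$, identifying $\LP \vee \ECQ$ as the extension of $\LP$ by $p, \dmneg p \vdash \emptyset$, and tracing the two cases) merely unwinds what that proposition already delivers.
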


\begin{proof}
  The logic $\LP$ enjoys $(\LP, \LP)$-interpolation and $\LP \vee \ECQ$ is an explosive extension of $\LP$. The claim now follows by Proposition \ref{prop:exp-interpolation}.
\end{proof}

  Interpolation for some other super-Belnap logics may also be established using Theorem \ref{thm:interpolation}, including the extensions of $\B$ or $\ETL$ by the rules
\begin{equation*}
  (p_{1} \wedge \dmneg p_{1}) \vee \ldots \vee (p_{n} \wedge \dmneg p_{n}) \vdash \emptyset,
\end{equation*}
  and the extensions of $\B$ by the rules
\begin{equation*}
  (p_{1} \wedge \dmneg p_{1}) \vee \ldots \vee (p_{n} \wedge \dmneg p_{n}) \vee q, \dmneg q \vee r \vdash r.
\end{equation*}
  It was proved in \cite{rivieccio12} and \cite{prenosil16} that these two rule schemas define strictly increasing sequences of extensions of $\B$ and $\ETL$.




  Moreover, a slight modification of Proposition \ref{prop:interpolation} yields a syntactic proof of Milne's non-classical refinement of the Craig inter\-polation theorem for classical propositional logic. The reader is encouraged to compare this proof with the standard syntactic proof of interpolation for classical logic based on the cut elimination theorem, found e.g.\ in \cite[Section 4.4.2]{basicprooftheory}

\begin{proposition}[\cite{milne16}] \label{prop:cl-interpolation}
  The logic $\CL$ enjoys $(\K, \LP)$-interpolation.
\end{proposition}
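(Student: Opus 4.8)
The plan is to mimic the proofs of Theorem~\ref{thm:interpolation} and Proposition~\ref{prop:interpolation}, exploiting that $\GKforCL$ arises from $\GKforB$ by adding exactly the two specific structural rules Identity and Cut, of which Cut is a generalized cut rule while Identity is the characteristic rule of $\LP$. First I would pass to the Gentzen side: $\varphi \vdash_{\CL} \psi$ holds if and only if $\{ \emptyset \seq \varphi \} \vdash_{\GCL} \emptyset \seq \psi$, and since $\GKforCL$ satisfies the expansion property, Theorem~\ref{thm:normal-form} lets me replace any witnessing proof by a structurally atomic analytic--synthetic proof of $\emptyset \seq \psi$ from $\{ \emptyset \seq \varphi \}$ enjoying the subformula property. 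Along each branch such a proof consists of elimination rules at the top, atomic instances of Identity, Cut, Weakening and Contraction in the middle, and introduction rules at the bottom.

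The decisive step is to reorganize the structural middle part so that every application of Identity lies below every application of Cut. Here I would use the observation already recorded around Proposition~\ref{prop:initial-cuts-redundant} that a Cut one of whose premises is an Identity axiom $p \seq p$ is redundant, its conclusion being literally the other premise. Permuting Weakening and Contraction downwards to expose any Cut applied to a weakening or contraction of an Identity axiom and deleting the resulting redundant Cuts, I can arrange that no Identity output is ever a premise of a Cut. Each branch then factors as elimination rules and atomic instances of $\{ \text{Cut}, \text{Weakening}, \text{Contraction} \}$, followed by atomic instances of $\{ \text{Identity}, \text{Weakening}, \text{Contraction} \}$, followed by introduction rules. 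Declaring a node \emph{critical} when everything above it is of the first kind and everything below it is of the second, the set $\mathcal{C}$ of critical sequents is the candidate interpolant: everything above a critical node uses only rules valid in $\K$ (elimination rules and Cut), so $\{ \emptyset \seq \varphi \} \vdash_{\GK} \mathcal{C}$, while everything below uses only rules valid in $\LP$ (Identity and introduction rules), so $\mathcal{C} \vdash_{\GLP} \emptyset \seq \psi$.

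It remains to secure the variable condition, that each atom of a critical sequent occurs in both $\varphi$ and $\psi$. That it occurs in $\psi$ is essentially automatic: below a critical node the proof contains no Cut and converges to the single conclusion $\emptyset \seq \psi$, and none of Identity, Weakening, Contraction, or the introduction rules can delete an occurrence of an atom on the way down, so every atom of a critical sequent persists into $\emptyset \seq \psi$ and hence, by the subformula property, occurs in $\psi$. That it occurs in $\varphi$ is exactly the point handled in Theorem~\ref{thm:interpolation}: above a critical node Weakening is the only rule introducing new atoms, and since Cut is a generalized cut rule a spurious atom not occurring in $\varphi$ remains a side formula all the way up to the Weakening that introduced it, so its ancestors may be deleted from the critical sequent, the deleted occurrences being restored below the critical node by atomic Weakening, which is valid in $\LP$.

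The main obstacle I anticipate is precisely the reorganization in the second paragraph: one must check that permuting Weakening and Contraction past Cut and deleting Cuts against Identity terminates and preserves structural atomicity, analyticity--syntheticity, and the subformula property, and in the non-finitary case that this can be carried out simultaneously on all branches without creating an infinite branch, in the style of the stagewise construction in the proof of Proposition~\ref{prop:analytic--synthetic}. Once $\mathcal{C}$ is obtained, translating back along $\tautrans$, which preserves the atoms occurring in a sequent, and using Proposition~\ref{prop:sequent-composition-decomposition} to collapse $\mathcal{C}$ to a single formula $\chi$ yields an interpolant with $\varphi \vdash_{\K} \chi$, $\chi \vdash_{\LP} \psi$, and every atom of $\chi$ common to $\varphi$ and $\psi$.
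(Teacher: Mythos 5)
Your proposal is correct and takes essentially the same route as the paper: pass to a structurally atomic analytic--synthetic proof, rearrange it so that Identity and Cut never interact, take the resulting separating (critical) set of atomic sequents as the joint interpolant with the $\K$-valid rules (eliminations, Cut, Weakening, Contraction) above it and the $\LP$-valid rules (Identity, Weakening, Contraction, introductions) below it, and secure the variable condition exactly as in Theorem~\ref{thm:interpolation}. The one divergence is the step you flag as the main obstacle: instead of permuting Weakening and Contraction downwards and checking termination, the paper observes via Proposition~\ref{prop:initial-cuts-redundant} that any Cut premise lying below an Identity must already have the form $p, \Gamma \seq \Delta, p$, and then either replaces that Cut by Weakening (if $p$ is the cut formula) or replaces its conclusion, which again has the form $p, \Gamma' \seq \Delta', p$, by an Identity-plus-Weakening derivation, so the reorganization is achieved by a direct case analysis with no permutation argument at all.
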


\begin{proof}
  Recall that the calculus $\GKforCL$ extends $\GKforB$ by Identity and Cut. We wish to separate each structurally atomic analytic--synthetic proof in this calculus into a part which contains no instances of Cut and a part which contains no instances of Identity. To this end, suppose that a branch of the proof contains an instance of an Identity rule followed by a Cut, and suppose that no other instances of Cut occur between these two rules. Then only the common structural rules may occur between these two rules, therefore one of the premises of the Cut has the form $p, \Gamma \seq \Delta, p$ by Proposition~\ref{prop:initial-cuts-redundant}. If the cut formula of such an instance of Cut is $p$, this instance of Cut may be replaced by Weakening. If it is some other formula, then the conclusion of the cut has the form $p, \Gamma' \seq \Delta', p$ and thus may be derived using Identity and Weakening only. Repeated applications of this transformation yield a structurally atomic analytic--synthetic proof in which there is no branch containing both Identity and Cut.

  Define the \emph{separating set} of this proof as the set of all sequents in the proof such that only elimination rules and structural rules other than Identity occur above them and only introduction rules occur below. Each branch of the proof either intersects the separating set or ends with an instance of Identity (or one of the axioms $\emptyset \seq \top$ or $\bot \seq \emptyset$). Moreover, as in the proof of Theorem \ref{thm:interpolation}, each variable in the separating set must occur both in the premises and in the conclusion of the proof. The separating set therefore again jointly plays the role of the $(\K, \LP)$-interpolant.
\end{proof}

  It is easy to see that for $\CL$, $\LP$, $\K$, and $\ETL$ the interpolation theorems above are optimal in a natural sense. Let $\logic{L}_{1}$ and $\logic{L}_{2}$ be super-Belnap logics.

\begin{proposition}
   If $\CL$ enjoys $(\logic{L}_{1}, \logic{L}_{2})$-interpolation, then $\logic{L}_{1} \supseteq \K$ and $\logic{L}_{2} \supseteq \LP$. If $\LP$ enjoys $(\logic{L}_{1}, \logic{L}_{2})$-interpolation, then $\logic{L}_{2} = \LP$. If $\K$ ($\ETL$) enjoys $(\logic{L}_{1}, \logic{L}_{2})$-interpolation, then $\logic{L}_{1} = \K$ ($\logic{L}_{1} = \ETL$).
\end{proposition}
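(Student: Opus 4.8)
The plan is to reduce the entire proposition to two elementary ``forcing'' observations combined with Milne's Duality Principle (Lemma~\ref{lemma:contraposition}(ii)). The easy half concerns the right-hand component. I would first record the following: whenever a super-Belnap logic $\logic{L}$ with $\emptyset \vdash_{\logic{L}} p \vee \dmneg p$ enjoys $(\logic{L}_{1}, \logic{L}_{2})$-interpolation, then $\logic{L}_{2} \supseteq \LP$. To see this, apply interpolation to the valid consequence $p \vdash_{\logic{L}} q \vee \dmneg q$ (valid since $q \vee \dmneg q$ is a theorem of $\logic{L}$). As $p$ and $q \vee \dmneg q$ share no variable, the interpolant $\chi$ must be constant, hence $\B$-equivalent to $\top$ or $\bot$. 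The value $\bot$ is excluded because $p \vdash_{\logic{L}_{1}} \bot$ would contradict $\logic{L}_{1} \subseteq \CL$ (as $p \not\vdash_{\CL} \bot$), so $\chi$ is $\top$ and $\emptyset \vdash_{\logic{L}_{2}} q \vee \dmneg q$; as this rule axiomatizes $\LP$ over $\B$, we obtain $\LP \subseteq \logic{L}_{2}$. Applied to $\logic{L} = \CL$ this gives $\logic{L}_{2} \supseteq \LP$ in the first claim, and applied to $\logic{L} = \LP$ it gives $\logic{L}_{2} = \LP$ (the second claim), since there $\logic{L}_{2} \subseteq \LP$ already.

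For the left-hand component I would use the dual forcing instance built from disjunctive syllogism. Applying interpolation to $p \wedge (\dmneg p \vee q) \vdash_{\logic{L}} q$ (valid in every $\logic{L} \supseteq \ETL$), the only shared variable is $q$, and an easy check shows that the only formula in the single variable $q$ that is classically equivalent to $q$ is $\B$-equivalent to $q$ itself; hence the interpolant is pinned down to $q$, and the left premise becomes $p \wedge (\dmneg p \vee q) \vdash_{\logic{L}_{1}} q$, i.e.\ disjunctive syllogism, which axiomatizes $\ETL$ over $\B$. Thus $\ETL \subseteq \logic{L}_{1}$. For $\logic{L} = \ETL$ this already yields $\logic{L}_{1} = \ETL$ (the last clause), since $\logic{L}_{1} \subseteq \ETL$.

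The hard part is the remaining left-hand claims $\logic{L}_{1} \supseteq \K$ for $\CL$ and for $\K$, because the naive dual of the $\LP$-instance---interpolating $p \wedge \dmneg p \vdash \bot$---only forces the rule $p \wedge \dmneg p \vdash_{\logic{L}_{1}} \bot$, which axiomatizes $\ECQ$ rather than $\K$ (indeed $\K$ is not an explosive extension of $\B$, so no constant-interpolant instance can ever reach it). To overcome this I would invoke contraposition. Writing $\logic{L}^{\partial}$ for the contrapositive dual defined by $\varphi \vdash_{\logic{L}^{\partial}} \psi$ iff $\dmneg \psi \vdash_{\logic{L}} \dmneg \varphi$ (a super-Belnap logic, since $\B^{\partial} = \B$), one checks that $(\logic{L}_{1}, \logic{L}_{2})$-interpolation for $\logic{L}$ is equivalent to $(\logic{L}_{2}^{\partial}, \logic{L}_{1}^{\partial})$-interpolation for $\logic{L}^{\partial}$, the variable condition being preserved because $\dmneg$ is variable-preserving and involutive and the premise/conclusion roles being swapped. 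Since $\CL^{\partial} = \CL$ and $\K^{\partial} = \LP$ by Lemma~\ref{lemma:contraposition}, the right-hand forcing of the first paragraph transfers: for $\CL$, self-duality yields $(\logic{L}_{2}^{\partial}, \logic{L}_{1}^{\partial})$-interpolation for $\CL$, whence $\logic{L}_{1}^{\partial} \supseteq \LP$ and so $\logic{L}_{1} \supseteq \LP^{\partial} = \K$; for $\K$, duality turns the hypothesis into $(\logic{L}_{2}^{\partial}, \logic{L}_{1}^{\partial})$-interpolation for $\LP$, whence $\logic{L}_{1}^{\partial} = \LP$ by the second claim and therefore $\logic{L}_{1} = \K$.

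Finally I would collect the pieces: in each clause the relevant component also satisfies the reverse inclusion built into the definition of $(\logic{L}_{1}, \logic{L}_{2})$-interpolation (namely $\logic{L}_{1}, \logic{L}_{2} \subseteq \logic{L}$), so the lower bounds $\supseteq \LP$, $\supseteq \K$, $\supseteq \ETL$ upgrade to the asserted equalities wherever equalities are claimed. The only genuinely delicate point is the forcing of the interpolant in the two instances above; once one verifies that over a single shared variable the interpolant is determined up to $\B$-equivalence, the rest is bookkeeping, with the contraposition duality doing the real work of converting the recalcitrant left-optimality into the already-settled right-optimality.
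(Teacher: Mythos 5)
Your first two reductions are sound and essentially coincide with the paper's own forcing arguments (the paper uses the instances $\emptyset \vdash p \vee \dmneg p$ and $(p \wedge \dmneg p) \vee q \vdash q$ where you use $p \vdash q \vee \dmneg q$ and $p \wedge (\dmneg p \vee q) \vdash q$, but the pinning of the interpolant is the same). The gap is in the contraposition step. The relation $\logic{L}^{\partial}$ given by $\varphi \vdash_{\logic{L}^{\partial}} \psi$ iff $\dmneg\psi \vdash_{\logic{L}} \dmneg\varphi$ is only a binary relation between formulas, and for an arbitrary super-Belnap logic it is \emph{not} a logic; the parenthetical justification ``since $\B^{\partial} = \B$'' does not give closure under the multi-premise conditions defining a logic. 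Concretely, take $\logic{L}_{1} = \ECQ$. Then $p \vdash_{\ECQ^{\partial}} q \vee \dmneg q$ (since $q \wedge \dmneg q$ is an antitheorem of $\ECQ$ and hence $\ECQ$-entails $\dmneg p$), and $\ECQ^{\partial}$ contains every single-premise $\B$-consequence; but $p \not\vdash_{\ECQ^{\partial}} p \wedge (q \vee \dmneg q)$, because $\dmneg(p \wedge (q \vee \dmneg q)) \dashv\vdash_{\B} \dmneg p \vee (q \wedge \dmneg q)$ neither $\B$-entails $\dmneg p$ nor is an antitheorem of $\ECQ$. Any logic extending $\B$ that contains the first two families of consequences must contain the third (adjunction plus cut), so $\ECQ^{\partial}$ is not a logic. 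This failure is used essentially in your argument: the right-hand forcing produces only the \emph{single} consequence $\top \vdash_{\logic{L}_{1}^{\partial}} q \vee \dmneg q$, and amplifying that one theorem to ``$\LP \subseteq \logic{L}_{1}^{\partial}$'' --- which is what you need in order to dualize back to $\K \subseteq \logic{L}_{1}$ --- is exactly the step that requires $\logic{L}_{1}^{\partial}$ to be closed under substitution and multi-premise cut over $\B$. Unwinding the duality shows what is actually proved: your dual forcing instance $p \vdash_{\LP} q \vee \dmneg q$ corresponds to applying the $\K$-interpolation hypothesis to $q \wedge \dmneg q \vdash_{\K} \dmneg p$, whose interpolant is constant and forced to be $\bot$, yielding only $q \wedge \dmneg q \vdash_{\logic{L}_{1}} \bot$, i.e.\ $\logic{L}_{1} \supseteq \ECQ$ --- precisely the ``naive dual'' conclusion you yourself dismissed as insufficient. (Redefining $\logic{L}_{1}^{\partial}$ as the least logic containing the contraposed pairs repairs the amplification but breaks the final step: for $\logic{L}_{1} = \ECQ$ that least logic contains $\LP$, yet $\K \not\subseteq \ECQ$.)

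The repair is to force the left component directly, which is what the paper does, and you already have the needed tool: run your $\ETL$ argument on the instance $(p \wedge \dmneg p) \vee q \vdash q$ instead of $p \wedge (\dmneg p \vee q) \vdash q$. This consequence is valid in $\K$, hence also in $\CL$; the only shared variable is $q$; and substituting a classical value for $p$ shows that any interpolant is classically equivalent to $q$, so your one-variable pinning gives $(p \wedge \dmneg p) \vee q \vdash_{\logic{L}_{1}} q$. Since this single-premise rule axiomatizes $\K$ over $\B$ (it is equivalent over $\B$ to the resolution rule, as the paper notes), this yields $\logic{L}_{1} \supseteq \K$ at once for both the $\CL$ and the $\K$ clauses, with no appeal to duality.
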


\begin{proof}
  The logics $\logic{L}_{1}$ and $\logic{L}_{2}$ are non-trivial in each case. Note that the rule $(p \wedge \dmneg p) \vee q \vdash q$ is equivalent over $\B$ to the rule $p \vee q, \dmneg q \vee r \vdash p \vee r$.

  Since $(p \wedge \dmneg p) \vee q \vdash_{\CL} q$, we have $(p \wedge \dmneg p) \vee q \vdash_{\logic{L}_{1}} \chi$ and $\chi \vdash_{\logic{L}_{2}} q$ for some $\chi$ which does not contain any variable other than $q$. Then clearly $\chi \dashv \vdash_{\B} q$, hence $(p \wedge \dmneg p) \vee q \vdash_{\logic{L}_{1}} q$ and $\logic{L}_{1} \supseteq \K$. Likewise, since $\emptyset \vdash_{\CL} p \vee \dmneg p$, we have $\emptyset \vdash_{\logic{L}_{1}} \chi$ and $\chi \vdash_{\logic{L}_{2}} p \vee \dmneg p$ for some $\chi$ which does not contain any variables. Then clearly $\chi \dashv \vdash_{\B} \top$, hence $\top \vdash_{\logic{L}_{2}} p \vee \dmneg p$ and $\logic{L}_{2} \supseteq \LP$. The same argument shows that $\logic{L}_{2} \supseteq \LP$ in the case of $\LP$.

  Finally, since $(p \wedge \dmneg p) \vee q \vdash_{\K} q$, we have $(p \wedge \dmneg p) \vee q \vdash_{\K} q$, we have $(p \wedge \dmneg p) \vee q \vdash_{\logic{L}_{1}} \chi$ and $\chi \vdash_{\logic{L}_{2}} q$ for some $\chi$ which does not contain any variable other than $q$. Then clearly $\chi \dashv \vdash_{\B} q$, hence $(p \wedge \dmneg p) \vee q \vdash_{\logic{L}_{1}} q$ and $\logic{L}_{1} \supseteq \K$. The argument for $\ETL$ is entirely analogical.
\end{proof}

  In conclusion, we hope that the above constitutes sufficient evidence that a systematic investigation of the family of super-Belnap logics, in addition to having some intrinsic interest of its own, may shed some new light even on well studied systems such as classical propositional logic.


\paragraph{Acknowledgements.} 
This project has received funding from the European Union’s Horizon 2020 research and innovation programme under the Marie Skłodowska-Curie grant agreement No 689176. The author also gratefully acknowledges the support of the grant P202/12/G061 of the Czech Science Foundation. The author is grateful to Ori Lahav and Kazushige Terui for directing his attention to the relevant literature.


\end{document}